\newtheorem{theorem}{Theorem}[section]
\newtheorem*{thm}{Theorem}
\newtheorem{lemma}[theorem]{Lemma}
\newtheorem{prop}[theorem]{Proposition}
\newtheorem{cor}[theorem]{Corollary}
\theoremstyle{definition}
\newtheorem{example}{Example}[section]
\theoremstyle{definition}
\newtheorem{definition}{Definition}[section]
\newtheorem*{remark}{Remark}
\newcommand{\sdbar}{\overline{SD}}
\newcommand{\sdbarr}{\overline{\mathscr{SD}}}
\newcommand{\gkl}{(g, k, \ell)}
\newcommand{\s}{\sigma}
\newcommand{\e}{\varepsilon}
\newcommand{\M}{\hbox{Maps}}
\newcommand{\MM}{{\mathcal Maps}}
\newcommand{\ks}{\sqcup_k S^1}
\newcommand{\els}{\bigsqcup_\ell S^1}
\newcommand{\ksd}{\bigsqcup_k S^1 \times \Delta^n}
\newcommand{\st}{g}
\newcommand{\be}{\begin{enumerate}}
\newcommand{\ee}{\end{enumerate}}
\newcommand{\olo}{\otimes \ldots \otimes}
\renewcommand{\to}{\longrightarrow}
\renewcommand{\mapsto}{\longmapsto}
\renewcommand{\setminus}{-}
\newcommand{\circc}{\stackrel{\; \circ}{c}}
\newcommand{\abs}[1]{|\,#1\,|}
\title{ Compactifying string topology }
\author[K. Poirier]{Kate Poirier}
\address[Kate Poirier]{Department of Mathematics, University of California, Berkeley, CA, 94720}
\email{poirier@math.berkeley.edu}
\urladdr{www.math.berkeley.edu/$\sim$poirier}
\author[N. Rounds]{Nathaniel Rounds}
\address[Nathaniel Rounds]{Department of Mathematics, University of Indiana, Bloomington, IN, 47405}
\email{nrounds@indiana.edu}
\urladdr{mypage.iu.edu/$\sim$nrounds}
\begin{document}

\begin{abstract}
We study the string topology of a closed oriented Riemannian manifold $M$.
We describe a compact moduli space of diagrams, called $\sdbar$, and show how the cellular chain complex of this space gives algebraic operations on the singular chains of the free loop space $LM$ of $M$.
These operations are well-defined on the homology of a quotient of this moduli space, $\sdbar/\sim$, which has the homotopy type of a compactification of the moduli space of Riemann surfaces.
In particular, our action of $H_0(\sdbar/\sim)$ on $H_*(LM)$ recovers the Cohen-Godin positive boundary TQFT on $H_*(LM)$.
\end{abstract}

\maketitle
\setcounter{tocdepth}{1}
\section*{Introduction}
String topology studies algebraic operations on the loop space of a manifold.
Let $M$ be a closed oriented smooth  manifold of dimension $d$, and let $H_*(LM)$ denote the singular homology of the free loop space $LM=Maps(S^1,M)$.
Chas and Sullivan constructed a {\em loop product}
\[ \bullet: H_i(LM) \otimes H_j(LM) \to H_{i+j-d}(LM)\]
and BV operator
\[ \Delta: H_i(LM) \to H_{i+1}(LM)\]
giving $H_*(LM)$ the structure of a Batalin-Vilkovisky algebra~\cite{CS}.
Let $H_*^{S^1}(LM)$ denote the $S^1$-equivariant homology of $LM$.
Chas and Sullivan also constructed a {\em string bracket} 
\[ \{\, , \,\}: H_i^{S^1}(LM) \otimes H_j^{S^1}(LM) \to H_{i+j-d+2}^{S^1}(LM)\]
giving $H_*^{S^1}(LM)$ the structure of a graded Lie algebra. 
They later extended structure this to an involutive Lie bialgbra structure on $H^{S^1}(LM,M)$ where $M$ denotes the subspace of constant loops in $LM$ \cite{CS2}.

These algebraic structures have since been explained and generalized in many ways.
The modern view is that string topology operations should be parameterized by moduli spaces of Riemann surfaces.
Spaces of fatgraphs have long been used to give combinatorial descriptions of the open moduli space of Riemann surfaces \cite{Strebel, Penner, Harer, Igusa, costello2}.

Cohen and Jones \cite{cohenjones} gave a homotopy-theoretic reformulation of the Chas-Sullivan product which Cohen and Godin generalized using a class of fatgraphs called {\em Sullivan chord diagrams} \cite{CG}.
The Cohen-Godin operations induce an action of $h_0(Sull\gkl)$ on $h_*(LM)$, where $Sull\gkl$ is the space of Sullivan chord diagrams and $h_*$ is any homology theory for which $M$ has an orientation. 
This action gives $H_*(LM)$ the structure of a Frobenius algebra with no counit, which Cohen and Godin called a positive boundary Topological Quantum Field Theory.
Chataur's extended this action to one of $H_*(Sull\gkl)$ on $H_*(LM)$ \cite{ChataurBordism}.

The space $Sull\gkl$ of Sullivan chord diagrams is a subspace of the moduli space $\mathcal{M}\gkl$ of Riemann surfaces of genus $g$ with $k$ incoming and $\ell$ outgoing boundary components. Cohen and Godin conjectured that  $Sull\gkl$ and $\mathcal{M}\gkl$ have the same homotopy type. Godin discovered that this conjecture is false \cite{godin}, and generalized string topology operations further to give an action of $H_*(\mathcal{M}\gkl)$ on $H_*(LM)$. She calls this structure a Homological Conformal Field Theory.

In the case where $M$ is simply connected, string topology operations can be studied from the perspective of Hochschild homology
 \cite{CV, CTZ, FTVP, Kaufmann,TZ}. Westerland and Wahl recently described an action on the Hochschild homology recovering the Chas-Sullivan BV structure as part of a Homological Conformal Field Theory \cite{ww}. It is not known if this HCFT agrees with the one Constructed by Godin.

It is expected that the string topology operations described above are the shadow of a deeper structure, for
all of these results can potentially be generalized in two directions. 
First, the Cohen-Godin-Chataur action of $H_*(Sull\gkl)$ in $H_*(LM)$ should be induced by an chain-level action of $C_*(Sull\gkl)$ on $C_*(LM)$.
Different flavors of this idea can be described using the language of Open-Closed Topological Conformal Field Theories in the sense of Getzler~\cite{getzler} and Costello~\cite{costello}, and the language of Topological Quantum Field Theories in the sense of Moore-Segal~\cite{segal} and Lurie~\cite{lurie}.
Blumberg, Cohen, and Teleman have recently made progress in describing string topology in this way~\cite{bct}.

Second, the space $Sull\gkl$ is a subspace of an open moduli space of Riemann surfaces.
Sullivan has conjectured that a compactification of the open moduli space of Riemann surfaces should act on $H_*(LM)$ and $C_*(LM)$~\cite{STBAPS}.
Our goal is to give an action of the cellular chains of a compactified moduli space of Riemann surfaces on the singular chains of the free loop space.
This paper constitutes a first step towards this goal.

Instead of the space $Sull\gkl$ of Sulllivan chord diagrams, we study a related space $\sdbar\gkl$ of {\em string diagrams}. 
The main result is the following.
\begin{thm}
Let $M$ be a closed, oriented, Riemannian manifold of dimension $d$, and let  $\sdbar(g,k,\ell)$ be the cellular moduli space of string diagrams of type $(g,k,\ell)$.
There exists a chain map
\[ \mathcal{ST}: \mathcal{C}_i(\sdbar(g,k,\ell)) \otimes C_j(LM) \to C_{i+j + (2-2g-k-\ell)d}(LM).\]
This chain map induces a map on homology:
\[ \mathcal{ST}: \mathcal{H}_i(\sdbar\gkl/\sim) \otimes H_j(LM) \to H_{i+j+(2-2g-k-\ell)d}(LM).\]
When $i=0$, the resulting maps
\[ \mathcal{ST}: \mathcal{H}_0(\sdbar\gkl/\sim) \otimes H_j(LM) \to H_{j+(2-2g-k-\ell)d}(LM).\]
recover Cohen and Godin's positive boundary TQFT structure on $H_*(LM)$.
\end{thm}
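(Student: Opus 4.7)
The plan is to construct the operation via a family of Umkehr correspondences parameterized by $\sdbar\gkl$, and then to match the $i=0$ part with the Cohen--Godin TQFT.

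First I would assemble the individual diagrams $c \in \sdbar\gkl$ into a universal diagram $\Gamma \to \sdbar\gkl$ whose fiber $\Gamma_c$ is the underlying singular surface (or fatgraph) carrying $k$ distinguished incoming circles and $\ell$ distinguished outgoing circles. Taking fiberwise mapping spaces into $M$ yields a total space $\MM \to \sdbar\gkl$ of pairs $(c, f\colon \Gamma_c \to M)$, and restriction to the incoming and outgoing boundaries gives a parameterized correspondence
\[
(LM)^k \times \sdbar\gkl \xleftarrow{\rho_{in}} \MM \xrightarrow{\rho_{out}} (LM)^\ell.
\]
The map $\rho_{in}$ is fiberwise a finite-codimensional embedding: a loop tuple extends to a map of $\Gamma_c$ exactly when finitely many edge-gluing evaluations coincide. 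Counting these constraints against the Euler characteristic of $\Gamma_c$ gives codimension $-(2-2g-k-\ell)d$, matching the claimed degree shift.

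Next, using the Riemannian metric and orientation of $M$, I would build a chain-level Umkehr map $\rho_{in}^{!}$ cell by cell on $\sdbar\gkl$ by choosing coherent tubular neighborhoods of the diagonal inclusions cutting out $\MM$ and applying Thom--Pontryagin collapse, then compose with the honest singular pushforward $(\rho_{out})_*$ to define $\mathcal{ST} = (\rho_{out})_* \circ \rho_{in}^{!}$. The hard part will be making this chain-level construction compatible with the cellular boundary of $\sdbar\gkl$: as one crosses a codimension-one face of a cell, edges of $\Gamma_c$ collapse and the combinatorial type degenerates, so the tubular-neighborhood and collapse data have to be chosen inductively on skeleta in a way that produces the Leibniz identity $\partial \circ \mathcal{ST} \pm \mathcal{ST} \circ (\partial \otimes 1 + 1 \otimes \partial) = 0$ on the nose rather than merely up to homotopy. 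I expect that the cellular structure of $\sdbar\gkl$ described earlier in the paper is engineered precisely so that this coherent choice is possible.

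For the descent to $\sdbar\gkl/\sim$ on homology, I would analyze the relation $\sim$ and show that identified diagrams produce Umkehr correspondences related by a homotopy of finite-codimensional embeddings; such a homotopy furnishes a chain homotopy between the corresponding operations, so the induced map factors through $\mathcal{H}_*(\sdbar\gkl/\sim)$.

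Finally, for the $i=0$ claim I would verify that each connected component of $\sdbar\gkl/\sim$ contains a representative whose $\Gamma_c$ is literally a Sullivan chord diagram in the Cohen--Godin sense, and check that the above Umkehr-pushforward construction specialized to such a representative reproduces the Cohen--Godin correspondence. Since both sides arise from Thom--Pontryagin collapse for the same fiberwise embedding with the same orientation data, the comparison on $H_*(LM)$ should be essentially formal.
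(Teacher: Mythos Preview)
Your outline has the right architecture---parameterized correspondence, Umkehr, output restriction, descent through $\sim$, comparison with Cohen--Godin---but it diverges from the paper's proof at the key technical step and leaves a real gap there.

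The paper does \emph{not} build $\rho_{in}^{!}$ by choosing tubular neighborhoods of $\rho_{in}(\MM)$ inside $(LM)^k\times\sdbar$ coherently over skeleta. Instead it linearizes the problem to a finite-dimensional target: one fixes once and for all a Thom cocycle $U$ for the multidiagonal $M^\chi\hookrightarrow M^{2\chi}$, evaluates a singular simplex $\sigma$ at the $2\chi$ chord endpoints to get $ev_c\colon c\times\Delta^n\to M^{2\chi}$, and caps with $ev_c^*U$. The map to $\MM$ is then produced by the \emph{geodesic construction}: on the locus $\tilde S_\varepsilon$ where endpoint images are $\varepsilon$-close, each chord is sent to the unique short geodesic between its endpoint images. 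Because $U$ and the geodesic extension are chosen globally (they do not depend on the cell $c$), compatibility with the cellular boundary reduces to elementary naturality diagrams (Lemmas~3.2--3.4), not to an inductive coherence argument. Your proposed inductive choice of tubular neighborhoods in an infinite-dimensional loop space, with on-the-nose agreement across faces, is exactly the difficulty the geodesic/pullback device is designed to bypass; as stated it is a gap rather than a different proof.

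Two further points where the paper's argument is less formal than you suggest. First, the descent through $\sim$ is not just ``homotopy of embeddings'': a slide moves a chord endpoint \emph{off} the input circles and along another chord, so during the homotopy the diagram is not a string diagram and the evaluation map must be redefined using the geodesic image of the traversed chord (this is the construction of $L_\sigma$ and $\Lambda_{z,z'}$ in Section~4). Second, the Cohen--Godin comparison is not automatic: their maps are constant on ghost edges while the geodesic construction is not, so one needs the explicit chain homotopy coming from deformation-retracting the tubular neighborhood onto $\mathrm{Maps}(S(\Gamma),M)$ (Diagram~(2) in the proof of Theorem~4.3).
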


We now summarize the contents of the paper.
A {\em string diagram of type $(g,k,\ell)$} is a certain type of fatgraph which determines a Riemann surface of genus $g$ with $k+\ell$ boundary components.
In Section 1, we define for each $g \ge 0$, $k>1$,and $\ell >1$ a compact moduli space $\sdbar(g,k,\ell)$ of string diagrams, and describe a CW complex structure on this moduli space. $\sdbar(g,k,\ell)$.
We also define an open subspace $SD\gkl$ of $\sdbar\gkl$ which is a union of open cells.

Let $M$ be a closed, oriented Riemannian manifold of dimension $d$, let $C_*(LM)$ denote the singular chain complex of $LM$, and let $\mathcal{C}_*(\sdbar(g,k,\ell))$ denote the cellular chain complex of $\sdbar(g,k,\ell)$.
In Section 2, we define a map we call the string topology construction:
\[ \mathcal{ST}: \mathcal{C}_*(\sdbar(g,k,\ell)) \otimes C_*(LM) \to C_{* + (2-2g-k-\ell)d}(LM).\]

In Section 3, we prove that $\mathcal{ST}$ is a chain map.

In Section 4, we put an equivalence relation $\sim$, called {\em slide equivalence}, on the cells of $\sdbar\gkl$, and prove that $SD\gkl/\sim$ is homotopy equivalent to $Sull\gkl$.
Thus, $\sdbar\gkl/\sim$ is a compactification of a space homotopy equivalent to $Sull\gkl$.
It is in this sense that we are compactifying string topology.
The cell complex $\sdbar\gkl/\sim$ was shown in the first author's thesis to be homotopy equivalent to B\"odigheimer's harmonic compactification of the open moduli space of Riemann surfaces of type $\gkl$.~\cite{bodigheimer,katethesis}.
The string topology construction is not well-defined on slide equivalence classes of string diagrams.
However, we show that if two cells $c$ and $c'$ of $\sdbar\gkl$ are slide equivalent, then the maps $\mathcal{ST}(c,-)$ and $\mathcal{ST}(c',-)$ differ by a chain homotopy.
Thus $\mathcal{ST}$ gives a well-defined map 
\[ \mathcal{ST}: \mathcal{H}_*(\sdbar\gkl/\sim) \otimes H_*(LM) \to H_{*+(2-2g-k-\ell)d}(LM).\]
We show that this map recovers Cohen-Godin's action of $H_0(Sull\gkl)$.
We do not know if the operations coming from the higher homology of $\sdbar\gkl/\sim$ agree with those of Chataur or with those of Godin.

In Section 5, we prove a gluing result to show that our action of $H_0(\sdbar\gkl/\sim~)$ gives a Frobenius algebra without counit in the sense of Cohen-Godin. Furthermore, the homotopy equivalence of Corollary 4.9 induces an isomorphism between this Frobenius algebra structure and that of Cohen-Godin.
 
One might wish to say that $C_*(\sdbar\gkl/\sim)$ is a PROP or a properad, and that we have an action of this properad on $C_*(LM)$.
However, this more ambitious claim is false for two reasons.
This first, alluded to above, is that our string topology construction differs by a chain homotopy on slide equivalent cells, so after we quotient by slide equivalence our operations are well-defined only on homology.
The second problem, discussed in Section 5,
is that gluing of string diagrams induces composition maps on $C_*(\sdbar\gkl/\sim)$, but these maps are associative only up to homotopy.
In a future paper, we plan to construct a larger space, $\overline{\mathscr{LD}}\gkl$ which is homotopy equivalent both to $\sdbar\gkl/\sim$ and to B\"odigheimer's harmonic compactification of the moduli space of Riemann surfaces. The cellular chains of $\overline{\mathscr{LD}}\gkl$
will form a properad under gluing of surfaces, 
and we plan to show that this properad acts on $C_*(LM)$.


The original homology-level operations of Chas-Sullivan relied on transversality assumptions. The idea of using short geodesic arcs to give a chain level string topology construction, as carried out in Section 2 of this paper, was first suggested by Dennis Sullivan \cite{STBAPS}. This geodesic construction allows us to define chain-level string topology operations without making transversality assumptions.

A construction similar to the string topology construction of Definition~\ref{def stc} appeared in the first author's thesis \cite{katethesis}.

\subsection*{Acknowledgements.}
The authors would like to thank Dennis Sullivan and Janko Latschev  for many helpful conversations.

The first author is partially supported by NSF RTG grant DMS-0838703.

\section{The space of string diagrams}\label{sdbarUSD}

In this section we define a class of graphs with extra structure, called string diagrams, and show that the moduli space $\sdbar$ of string diagrams is a CW complex. 
We then describe a second CW complex, called $U\sdbar$, and a projection map from $U\sdbar$ to $\sdbar$ that the fiber in $U\sdbar$ over every point in the moduli space $\sdbar$ is the string diagram corresponding to that point. 
Though the projection map is not a bundle map, we think of $U\sdbar$ as a ``universal bundle'' over the moduli space $\sdbar$.

In the sequel, $S^1$ denotes the standard oriented metric graph with one vertex and one edge of length 1:
\[S^1 = [0,1] / 0 \sim 1.\]

\subsection{Fatgraphs and string diagrams}

\begin{definition}
A {\em fatgraph} is the follwowing data:
\be
\item A finite connected graph $\Gamma$.
\item For each vertex $v$ of $\Gamma$, a cyclic order of the set of edges adjacent to $v$.
\ee
By a cyclic order of a set, we mean a permutation of that set which is a single cycle.
\end{definition}

 \begin{figure}[h] \label{fatgraph1}
\centering
\includegraphics{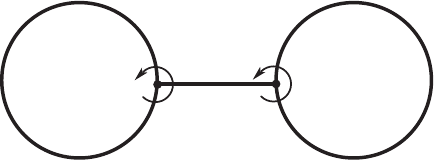}
\caption{A fatgraph with two vertices and three edges. The cyclic orders are indicated.}
\end{figure}

\begin{definition}
Let $\Gamma$ be a fatgraph, and let $E_\Gamma$ denote the set of edges of $\Gamma$.
Let $\overline{E}_\Gamma$ denote the set
\[ \{ (e,o) \,\, | \,\, e \in E_\Gamma, o \text{ is an orientation of } e\}. \]
Then the cyclic order of the set of edges adjacent to each vertex of $\Gamma$ induces a permuation $\sigma_\Gamma$ of the set $\overline{E}_\Gamma$ defined as follows.
Let $(e,o)$ be an oriented edge with final vertex $v$.
Let $e'$ be the next edge after $e$ in the cyclic order of the edges adjacent to $v$.
Let $o'$ be the orientation of $e'$ for which $v$ is the intitial vertex of $e'$.
Then we set $\sigma_\Gamma(e,o) = (e',o')$.
A {\em boundary cycle} $\beta$ of $\Gamma$ is a cycle of oriented edges in the permutation $\sigma_\Gamma$.
The {\em realization} of a boundary cycle, denoted $\abs{\beta}$, is the oriented graph homeomorphic to a cicle whose cyclically ordered set of edges is precisely the set $\beta$.
\end{definition}

A fatgraph determines an orientable topological surface with boundary $\Sigma_\Gamma$ which contains the underlying graph as a deformation retract \cite{godin}.
This topological surface, sometimes called a ribbon surface, may be constructed as follows.
Starting with the underlying graph $\Gamma$, thicken the vertices $v$ into disks $D_v$ and the edges $e$ into strips $e \times I$. 
If $e$ is adjacent to $v$ in $\Gamma$, then the corresponding boundary component of $e \times I$ is identified with an arc on $\partial D_v$ in $\Sigma_\Gamma$. 
Boundaries of strips are identified along $\partial D_v$ according to the cyclic order of the corresponding edges adjacent to $v$. 
The boundary cycles of $\Gamma$ are in one-to-one correspondence with the boundary components of $\Sigma_\Gamma$. 
\begin{definition}
A fatgraph $\Gamma$ is of {\em type} $(g,n)$ if $\Sigma_\Gamma$ is of genus $g$ with $n$ boundary components.
\end{definition}

\begin{figure}[h] \label{fatgraph2}
\centering
\includegraphics{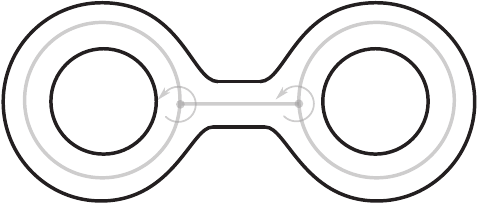}
\caption{The ribbon surface associated to a fatgraph.}
\end{figure}

\begin{definition}
A {\em metric fatgraph} is a fatgraph whose underlying graph $\Gamma$ is a metric space.
A {\em marked metric fatgraph} is a metric fatgraph together with a marked basepoint $0_\beta \in \abs{\beta}$ on the realization of each boundary cycle.
\end{definition}

Let $\Gamma$ be a marked metric fatgraph and let $\beta$ be a boundary cycle of $\Gamma$.
Let $\ell_\beta$ denote the sum of the lengths of edges which appear in the cycle $\beta$.
Let 
\[ S^1 \xrightarrow{\text{rescale}} [0,\ell_\beta]/(0 \sim \ell_\beta)\]
denote unique linear map which rescales the interval $[0,1]$ onto the interval $[0,\ell_\beta]$.
Let 
\[\phi_\beta: [0,\ell_\beta]/(0\sim \ell_\beta) \to \abs{\beta}\]
be the unique orientation preserving isometry of metric circles which sends 0 to the marked point $0_\beta$ of $\abs{\beta}$.
Let 
\[ \psi_\beta: \abs{\beta} \to \Gamma\]
be the unique map which sends sends the oriented edge $(e,o)$ of $\abs{\beta}$ bijectively onto the edge $e$ of $\Gamma$ in a manner which respects the orientation.
Let $\partial_\beta$ denote the composition
\[ \partial_\beta: S^1 \xrightarrow{\text{rescale}}  [0,\ell_\beta]/(0\sim \ell_\beta) \xrightarrow{\phi_\beta}  \abs{\beta} \xrightarrow{\psi_\beta} \Gamma.\]

Let $\Gamma$ be a marked metric fatgraph of type $(g,n)$ and let $\beta_1, \beta_2, \dots, \beta_n$ be its boundary cycles. 
Let 
\[ \partial_\Gamma: \sqcup_n S^1 \to \Gamma\]
denote the map which restricts to $\partial_{\beta_i}$ on the $i$-th copy of $S^1$.

\begin{definition}
An {\em unordered string diagram of type $\gkl$} is a marked metric fatgraph $\Gamma$ of type $(g, k + \ell)$ that is constructed from $k$ disjoint circles, called input circles, each of length $1$, and $2g-2+k + \ell$ intervals, called chords, each of length $1$. 
The endpoints of a chord $e$ are identified with points on input circles via an attaching map $\varphi_e: \partial(I) = \{0,1\} \to \sqcup_k S^1$. 
The cyclic order of edges at each vertex of $\Gamma$ is such that $k$ of the boundary cycles correspond to the input circles. 
The remaining $\ell$ boundary cycles are called output circles. 

A {\em string diagram of type $\gkl$} is an unordered string diagram of type $\gkl$ together with an ordering of the set of input circles and an ordering of the set of output circles.
\end{definition}

In Figures 3 and 4, vertices are denoted by $\bullet$ and marked points on boundary cycles are denoted by $\times$.

 \begin{figure}[h]
\centering
\includegraphics{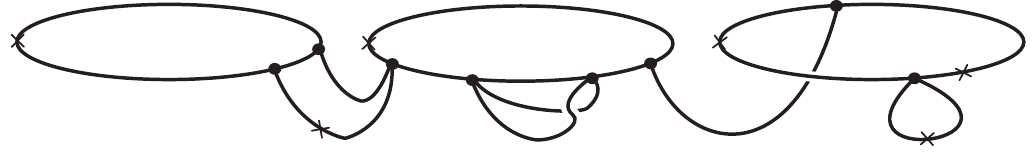}\label{stringdiagram}
\caption{A string diagram of type $(1,3,3)$.}
\end{figure}

 \begin{figure}[h]
\centering
\includegraphics{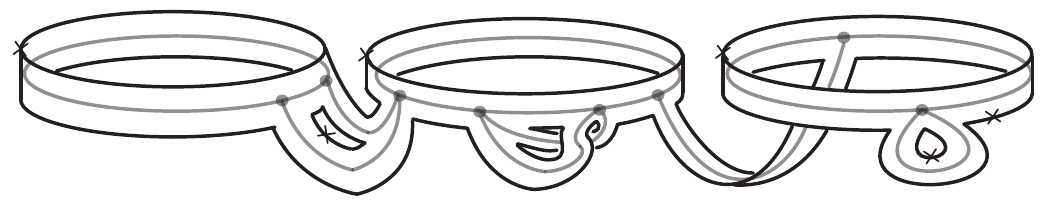}\label{stringdiagramfat}
\caption{The ribbon surface associated to the string diagram above.}
\end{figure}

\begin{remark}
A graph which is a disjoint union of $k$ circles has Euler characteristic $0$. Attaching the endpoints of a chord to a graph decreases the Euler characteristic by $1$ so a string diagram $\Gamma$ of type $\gkl$ has Euler characteristic  $-(2g-2+k + \ell)$. As Euler characteristic is a homotopy invariant, the ribbon surface $\Sigma_\Gamma$ associated to $\Gamma$ also has Euler characteristic  $-(2g-2+k + \ell)$. In particular, the Euler characteristic of $\Sigma_\Gamma$  is the Euler characteristic of a surface of genus $g$ with $k+\ell$ boundary components.
\end{remark}

\begin{definition}
A morphism of string diagrams is a map of the underlying metric graph that preserves cyclic orders of edges and markings of boundary cycles.
\end{definition}

In what follows, by {\em string diagram} we mean {\em isomorphism class of string diagrams}.

\begin{remark}\label{emailedtokate}
Each input circle of $\Gamma$ is an oriented metric circle of length 1. 
When $\beta$ is an input boundary cycle, $\ell_\beta = 1$. The identification of $|\beta|$ with $S^1$ is therefore an isometry and it is uniquely determined by the orientation and marked point of $|\beta|$. Additionally, the image of the map $\partial_\beta: S^1 \to \Gamma$ is an input circle of $\Gamma$ and $\partial_\beta$ is an
isometry.
In what follows, we will suppress the distinctions between
the realization $|\beta|$ of
 an input boundary cycle $\beta$ and the corresponding input circle which occurs as the image of $\partial_\beta$.
\end{remark}

Let $\alpha: S^1 \to S^1$
be the unique orientation {\em reversing} isometry taking the $0$-cell of $S^1$ to itself.
While the realization $|\beta|$ of the input circle $\beta$ is parametrized by $S^1$ using $\phi_\beta$,
we parametrize the realization $|\beta|$ of the output circle $\beta$ by $S^1$ using the composition
$$\phi_\beta \circ {\rm rescale} \circ \alpha: S^1 \to |\beta|.$$

\begin{remark}
The combinatorial data associated to a string diagram $\Gamma$ determines an ordering of the set of chords of $\Gamma$, which we describe in three stages.
\begin{enumerate}
\item The set of half-chords adjacent to each vertex $v$ of $\Gamma$ are ordered as follows.
Part of the data of a string diagram is a cyclic order of the half-edges adjacent to $v$.
Since $v$ lies on some input circle and each input circle is a boundary cycle, the two half-edges adjacent to $v$ which lie on the input circle must be adajecent in the cyclic order.
We order the half-chords adjacent to $v$ by starting with the first half-chord which follows the two half-edges which lie on the input circle in the cyclic order, and then proceeding with the cyclic order.
\item Each input circle is an oriented circle with a marked point.
We order the set of vertices on each circle by starting with the vertex closest to the marked point in the direction of the orientation and then proceeding in the direction of the orientation.
\item Part of the data of a string diagram is an ordering of the input circles.
Combining (1), (2), and (3) gives an ordering of the set of half-chords of $\Gamma$.
We order the set of chords by remembering only the first time that a half-chord of a given chord appears in this order.
\end{enumerate}
\end{remark}

\subsection{The space of string diagrams}

The set of isomorphism classes of string diagrams is a subset of the set of isomorphism classes of marked metric fatgraphs. 
The set of marked metric fatgraphs is given a topology \cite{Harer, Penner, Igusa}. 
The set of string diagrams inherits the subspace topology.

In what follows, we fix $\gkl$ where $g \geq 0$ and $k, \ell > 0$.

\begin{definition}
Let $\sdbar$ be the space of string diagrams of type $\gkl$. Let $SD$ be the subspace of $\sdbar$ consisting of string diagrams each of whose subgraph of chords is a disjoint union of trees.
Let $x_\Gamma$ denote the point in $\sdbar$ corresponding to the string diagram $\Gamma$.
\end{definition}

\begin{remark}
We are emphasizing the distinction between a string diagram $\Gamma$ as a topological space and its corresponding point $x_\Gamma \in \sdbar$.
Below we construct a space $U\sdbar$ and a surjective map $\pi: U\sdbar \to \sdbar$ so that $\pi^{-1}(x_\Gamma) = \Gamma$.
\end{remark}

\begin{example}
$\sdbar(0,2,1)$ is homeomorphic to $T^3$ the $3$-dimensional torus. Every string diagram of type $(0,2,1)$ consists of two input circles and one chord. The three circle parameters correspond to placements of marked points on the two input circles and one output circle.
\end{example}

\begin{prop}\label{sdbarCW}
The space $\sdbar$ is a CW complex of dimension $4g-4 + 2k + 3 \ell$.
\end{prop}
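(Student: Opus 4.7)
The plan is to exhibit $\sdbar$ as a finite CW complex by indexing its cells by combinatorial types of string diagrams and then carrying out a dimension count. By a \emph{combinatorial type} $T$ of type $\gkl$ I would mean: an isomorphism class of fatgraph underlying some string diagram of type $\gkl$ (with ordered inputs and outputs, and each vertex sitting on a specific input circle), together with, for each boundary cycle $\beta$, a specification of which half-edge of $|\beta|$ contains the marked point $0_\beta$, with the understanding that $0_\beta$ is allowed to coincide with one of the vertices of $\beta$. The number of such types is finite, since each trivalent type has at most $2(2g-2+k+\ell)$ vertices, and nontrivalent types arise from these by identifying vertices.

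For each type $T$, I would construct the open cell $e_T$ as a product indexed by the $k$ input circles and the $\ell$ output boundary cycles. If input circle $i$ carries $n_i$ chord endpoints and its marked point lies strictly in the interior of an arc between two consecutive vertices, then the $n_i + 1$ distinguished points partition the circle into $n_i + 1$ arcs of positive length summing to $1$; this factor is an open $n_i$-simplex. If the input marked point coincides with a vertex, the factor degenerates to an open $(n_i - 1)$-simplex. For each output boundary cycle $\beta$, the normalized position of $0_\beta$ within its containing half-edge contributes an open interval factor, degenerating to a point when $0_\beta$ is itself a vertex. A choice of positive arc lengths and of positions inside these open simplices/intervals reconstructs a metric marked fatgraph in $\sdbar$ of combinatorial type $T$, and the resulting map $e_T \to \sdbar$ is a continuous bijection onto the locus of diagrams of type $T$.

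For the dimension count, a top-dimensional cell $e_T$ is one in which every vertex is trivalent (so each input-circle vertex supports exactly one chord endpoint) and no marked point coincides with a vertex. Then the total number of vertices equals the total number of chord endpoints, namely $2(2g-2+k+\ell) = 4g-4+2k+2\ell$, each input-circle factor contributes $n_i$ parameters, and each output factor contributes $1$; summing yields
\[
\sum_{i=1}^{k} n_i \;+\; \ell \;=\; (4g-4+2k+2\ell) + \ell \;=\; 4g-4+2k+3\ell.
\]
Codimension-one degenerations arise in three combinatorially distinct ways: two chord endpoints on the same input circle collide (the simplex face where an arc length vanishes, producing a 4-valent vertex), an input marked point collides with a chord endpoint (another face of the input simplex), or an output marked point crosses a vertex (an endpoint of the output interval). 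Each such limit is precisely a point in the open cell of a strictly coarser combinatorial type, so the characteristic maps send simplicial/interval boundaries into the union of lower-dimensional cells as required.

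The main technical obstacle will be verifying that the CW topology produced by this construction agrees with the subspace topology on $\sdbar$ inherited from the space of marked metric fatgraphs. Concretely one must check that the characteristic maps $\overline{e_T} \to \sdbar$ are continuous and that, conversely, any convergent sequence of points in $\sdbar$ with eventually constant combinatorial type has its limit described correctly by the cell closure; this is a bookkeeping exercise in how collapsing arcs to length zero changes the underlying fatgraph, its cyclic orders, and its boundary cycles. Finiteness of the decomposition is automatic, giving compactness of $\sdbar$ as a byproduct.
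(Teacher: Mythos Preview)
Your proposal is correct and follows essentially the same approach as the paper: cells are indexed by combinatorial types of string diagrams, each open cell is realized as a product of open simplices (recording the positions of chord endpoints on each input circle relative to the input marked point) and open intervals (recording the position of each output marked point within its directed edge), the attaching maps are described by the same three degenerations you list, and the dimension count is identical. The paper likewise does not carry out in detail the verification that the CW topology matches the subspace topology from marked metric fatgraphs, so your level of rigor on that point is comparable.
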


\begin{proof}

A string diagram $\Gamma$ has an underlying fatgraph $G$ obtained by forgetting the metric structure. The collection of edges of $G$ are partitioned into input circle edges and chords as they are in $\Gamma$. 
We say that two string diagram have the same {\em combinatorial type} if their underlying marked fatgraphs are isomorphic.
An isomorphism $G \to G'$ of marked fatgraphs preserves the cyclic order at each vertex, and thus induces isomorphisms $\abs{\beta} \to \abs{\beta'}$ for each boundary cycle $\beta$ of $G$.
In particular, the location of the marked point on each boundary cycle --- either coinciding with a vertex of $\abs{\beta}$ or lying in the interior of a directed edge of $\abs{\beta}$ --- is preserved by this isomorphism.
Henceforth, $G$ denotes an isomorphism class of marked metric fatgraphs giving a combinatorial typle of string diagrams of type $\gkl$.

We show first that for a fixed $G$, the subspace $$\circc_G = \{x_\Gamma \in \sdbar \; |\textrm{ the fatgraph underlying }\Gamma \textrm{ is } G\}$$ of $\sdbar$ forms an open cell.

For a fixed $G$, a string diagram $\Gamma$ of combinatorial type $G$ is completely determined by the following parameters:
\begin{enumerate}
\item
The positions of the vertices on input circles of $G$.
\item
The positions of the marked points on the boundary cyles of $G$.
\end{enumerate}
We will see that $\circc_G$ is a product of $k$ open simplices and $N_\ell \leq \ell$ open intervals.

Consider $\Gamma$ with underlying fatgraph $G$. 
Let $n_i$ denote the number of vertices on the $i$-th inputs circle of $G$ which do not coincide with the marked point on that input circle.
If $n_i > 0$ for some $i$, let 
$v_1, v_2, \dots v_{n_i}$ denote these vertices on the $i$-th input circle. 
Let $t^i_0$ be the distance from the marked point to $v_1$, $t^i_j$ be the distance from $v_j$ to $v_{j+1}$ for $j = 1, 2, \dots, n_{i-1}$ and let $t^i_{n_i}$ be the distance from $v_{n_i}$ to the input marked point. Then $\sum_{j = 0}^{n_i} t^i_j = 1$. In particular, the positions of vertices on the $i$-th input circle of $\Gamma$ are determined by the point $t^i=(t^i_1, t^i_2, \dots, t^i_{n_i})$ in the interior of the standard $n_i$-simplex $\Delta^{n_i}$.
If $n_i=0$ for some $i$, then there are no vertices and the previous sentence is still true, provided we define the interior of 0-simplex to be the 0-simplex itself.
Therefore, the positions of vertices on all $k$ input circles of $\Gamma$ are determined by a point $t = (t^1, t^2, \dots, t^k)$ in the interior of $\Delta^{n_1} \times \Delta^{n_2} \times \cdots \times \Delta^{n_k}$.

Suppose that the marked point on the $i$-th output boundary cycle of $\Gamma$ lies in the interior of a directed edge $\vec{e_i}$. Let $[0,1]$ parametrize $\vec{e_i}$ such that $0$ maps to the source of $\vec{e_i}$ and $1$ maps to its target. Then the point marking $\vec{e_i}$ is determined by a point $p_i \in (0,1)$. 
Let $N_\ell$ be the number of output marked points on $\Gamma$ that lie in the interiors of directed edges. 
Then the positions of such marked points are determined by a point $p \in (0,1)^{N_\ell}$.

As the parameters $(t,p)$ vary in $int(\Delta^{n_1} \times \Delta^{n_2} \times \cdots \times \Delta^{n_k}) \times (0,1)^{N_\ell}$, all string diagrams with underlying fatgraph $G$ are obtained. Therefore,
$$\circc_G = int(\Delta^{n_1} \times \Delta^{n_2} \times \cdots \times \Delta^{n_k}) \times (0,1)^{N_\ell}$$
which is homeomorphic to the interior of 
$$c_G = \Delta^{n_1} \times \Delta^{n_2} \times \cdots \times \Delta^{n_k} \times [0,1]^{N_\ell}$$
This space is homeomorphic to a closed ball, i.e., a cell.
We have $$\sdbar = \bigsqcup_G \circc_G,$$
where $G$ varies over combinatorial types of string diagrams of type $\gkl$.

Next we describe how the closed cells $c_G$ are assembled to give a CW-complex.

The $0$-cells of $\sdbar$ correspond to combinatorial types $G$ where all vertices and all output marked points coincide with input marked points.

Let $\sdbar^{\;m}$ be the $m$-skeleton of $\sdbar$. For $m$-dimensional cells, the attaching maps $\phi_G: \partial(c_G) \rightarrow \sdbar^{\; m-1}$ are determined by identifications of faces of the cell $c_G$ with cells $c_{G'}$ of lower dimension. If $(t,p)$ is a point on the boundary of $c_G$ then either some $t^i_j = 0$, or some $p_i = 0$ or $1$.

Let $(t,p)$ be a point in $c_G$ such that $t^i_j = 0$ for one $j \in \{1, 2, \dots, n_i - 1\}$. Then $(t,p)$ determines some string diagram $\Gamma'$ the vertices $v_j$ and $v_{j+1}$ coincide. In $\Gamma'$ this vertex is labeled $v_j$ and the vertices $v_{j+2}, \dots, v_{n_i}$ are renumbered accordingly.
The combinatorial type $G'$ of $\Gamma'$ is obtained from $G$ by contracting an input circle edge between vertices. Similarly, if $t_0 = 0$ (respectively $t_{n_i} = 0$) then $G'$ is obtained from $G$ by bringing the first (respectively last) vertex and the marked point on the $i$-th input circle together. Finally, if $p_i = 0$ (respectively $p_i=1$) for some $i$, then $G'$ is obtained from $G$ by bringing the output marked point in the interior of $\vec{e_i}$ to its source (respectively its target). In each of these cases, $c_{G'}$ is identified with the appropriate face of the cell $c_G$. These identifications determine the attaching map $$\phi_G: \partial(c_G) \to \sdbar^{\; m-1}$$
where $\phi_G(t,p) = x_{\Gamma'}$.

Top-dimensional cells $c_G$ correspond to combinatorial types $G$ such that no chord endpoints coincide with one another or with marked points on the boundary cycles. 
Each chord gives 2 parameters: the positions of its endpoints on input circles relative to the input marked point. 
Each output marked point contributes 1 parameter: its position on its output boundary cycle. 
As there are $2g -2 +k + \ell$ chords and $\ell$ output boundary cycles, 
\begin{align*}
dim(c_G) &= 2(2g-2 +k + \ell) + \ell\\ &= 4g-4 + 2k + 3 \ell.
\end{align*}
\end{proof}

\begin{remark}
Every cell in $\sdbar$ is the face of some top-dimensional cell.
\end{remark}

\begin{cor}
The subspace $SD$ of $\sdbar$ is a union of open cells and is dense in $\sdbar$.
\end{cor}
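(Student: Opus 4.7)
The corollary contains two claims, and I would address them in sequence. The first is that $SD$ is a union of open cells of $\sdbar$. For this I would simply note that the defining condition---that the subgraph of chords form a disjoint union of trees---depends only on the combinatorial type of the string diagram and not on the metric parameters $(t,p)$ describing a point of $\circc_G$. Since each open cell $\circc_G$ consists precisely of the diagrams of combinatorial type $G$, we have either $\circc_G \subseteq SD$ or $\circc_G \cap SD = \emptyset$. Consequently $SD$ is the disjoint union of those $\circc_G$ for which the chord subgraph of $G$ is a disjoint union of trees.

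For density, the plan is to show that every top-dimensional open cell of $\sdbar$ is already contained in $SD$. By the characterization of top cells given at the end of the proof of Proposition~\ref{sdbarCW}, a combinatorial type $G$ realizing a top-dimensional cell has the property that no two chord endpoints coincide and no chord endpoint coincides with a boundary marked point. Viewing the chord subgraph as the graph whose vertices are the chord endpoints and whose edges are the chords, the "no coincidences" condition forces every such vertex to have degree one and rules out loops. The chord subgraph is therefore a disjoint union of isolated edges, each of which is a tree, so $\circc_G \subseteq SD$.

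To conclude density, I would invoke the remark immediately preceding the corollary: every cell of $\sdbar$ is a face of some top-dimensional cell. This yields that every point of $\sdbar$ lies in the closure of some top-dimensional open cell, and since every such cell is contained in $SD$, the subspace $SD$ meets every nonempty open subset of $\sdbar$ and is dense.

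I do not anticipate any substantive obstacle here. The only point requiring a little care is the combinatorial verification in the second paragraph that a generic (top-dimensional) configuration really produces a chord subgraph consisting of disjoint edges; this is essentially immediate once one unpacks the condition that distinguishes top cells from lower-dimensional faces.
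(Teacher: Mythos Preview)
Your proposal is correct and follows essentially the same approach as the paper: you observe that the tree-chord-subgraph condition depends only on the combinatorial type (so $SD$ is a union of open cells), then show that every top-dimensional cell lies in $SD$ because generic chord endpoints are all distinct, and finally invoke the remark that every cell is a face of a top cell to obtain density. Your write-up is slightly more explicit than the paper's in spelling out why the chord subgraph of a top cell is a disjoint union of edges, but the logic is identical.
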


\begin{proof}
Given a string diagram $\Gamma$, the {\em chord subgraph} is the subgraph of the underlying fatgraph of $\Gamma$ which consists only of the chords.
Recall that $SD$ is the subsapce of $\sdbar$ consisting of those string diagrams whose chord subgraphs are disjoint unions of trees.
Let $\Gamma$ be a string diagram in $SD$.
Then $x_\Gamma \in \circc_G$, where $G$ denotes the combinatorial type of $\Gamma$.
Furthermore, for all $x_{\Gamma'} \in \circc_G$, the chord subgraph of  $\Gamma'$ is a disjoint union of trees. 
Let 
$ \mathcal{G}$ denote the set of all combinatorial types of string diagrams in $\sdbar$ such that the chord subgraph of $G$ is a disjoint union of trees.
Then
$$SD = \bigsqcup_{G \in \mathcal{G}} \circc_G.$$

Let $c_G$ be a top-dimensional cell of $\sdbar$.
If $x_\Gamma \in \circc_G$ then all chords of $\Gamma$ have distinct endpoints and its chord subgraph is a disjoint union of trees. Therefore, all top-dimensional cells are in $SD$. Since every cell of  is the face of some top-dimensional, $SD$ is dense in $\sdbar$.
\end{proof}

\begin{example}
Recall that $\sdbar(0,2,1)$ is homeomorphic to $T^3$. The first two $S^1$ factors correspond to the placements of chord endpoints on the two input circles relative to the marked points. These factors are decomposed according to the standard CW decomposition of $S^1$ with one $0$-cell and one $1$-cell. The third $S^1$ parameter corresponds to the placement of the point marking the output. The marked point may lie in one of four possible regions: one one or the other input circle or on one or the other directed edge coming from the chord. Therefore, the third $S^1$ parameter is decomposed into four $0$-cells and four $1$-cells.
\end{example}

This CW structure on $\sdbar$ is not a regular cell complex structure. Let $c_G$ be an $m$-cell and let $\Phi_G: c_G \to \sdbar$ be the characteristic map induced by the attaching map $\phi_G: \partial(c_G) \to \sdbar^{\; m-1}$. Then the closure of $\Phi(\circc_G)$ in $\sdbar$ is not homeomorphic to a closed ball. Let $(t,p) \in \partial(c_G)$ be such that $t^i_j = 0$ for all $j = 0, 1, \dots, n_i - 1$ and $t^i_{n_i} = 1$ for some $i$. Then $(t, p)$ determines a string diagram $\Gamma'$ where all chord endpoints on the $i$-th input circle coincide with the input marked point. Consider $(t', p) \in \partial(c_G)$ such that $t'^x_j = t^x_j$ for all $x \neq i$, $t^i_0 = 1$ and $t^i_j = 0$ for all $j = 1, \dots, n_i $. Then $(t', p)$ also determines a string diagram $\Gamma''$ where all chord endpoints on the $i$-th input circle coincide with the input marked point. In fact, cyclic orders of chords at this vertex agree and $\Gamma' = \Gamma''$ so $\phi_G(t,p) = \phi_G(t',p)$. Similarly, consider $G$ where the $i$-th output boundary cycle consists of a single directed edge $\vec{e}_i$. This is possible only if the two chord endpoints of the chord $e_i$ coincide on some input circle. Let  $\Gamma' \in \partial(c_G)$ be determined by $(t,p)$ where $p_i=0$ and let $\Gamma'' \in \partial(c_G)$ be determined by $(t,p')$ where $p'_x = p_x$ for $x \neq i$ and $p'_i = 1$.  Again, $\Gamma'=\Gamma''$ and $\phi_G(t,p) = \phi_G(t,p')$. 
We conclude that $\Phi(\circc_G)$ is homeomorphic to a product of $k$ quotients of simplices where the first and last vertices have been identified and $\ell$ quotients of intervals where the endpoints of $i$-th interval are identified if the $i$-th output boundary cycle is made up of one directed edge.

The definition of the string topology construction in section \ref{STC} uses a regular cell structure of $\sdbar$ which is a decomposition of the one above.

By replacing each simplex and interval factor in a cell $$c_G = \Delta^{n_1} \times \Delta^{n_2} \times \cdots \times \Delta^{n_k} \times [0,1]^{N_\ell}$$ of the CW structure of $\sdbar$ by its barycentric subdivision, we obtain a new decomposition of $c_G$ and hence one of $\sdbar$. 

\begin{lemma}
The CW structure on $\sdbar$ obtained by sudividing each cell $c_G$ as above is a regular cell complex structure.
\end{lemma}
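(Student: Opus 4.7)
The plan is to verify that the characteristic map of each cell in the refined structure is a homeomorphism onto its image, which characterizes a regular CW complex. Since each refined cell is a compact product of small simplices and small intervals and $\sdbar$ is Hausdorff, it suffices to prove injectivity of each refined characteristic map. My first step is to isolate the precise ways in which the original characteristic map $\Phi_G : c_G \to \sdbar$ fails to be injective. The discussion preceding the lemma identifies exactly two sources: (i) for each simplex factor $\Delta^{n_i}$ with $n_i \geq 1$, the two extreme vertices $v_{i,0}$ and $v_{i,n_i}$, and more generally the full subfaces obtained by fixing the $i$-th factor at either extreme while letting the other factors vary, are identified by $\Phi_G$ because both correspond to collapsing all chord endpoints on the $i$-th input circle to the marked point with the same induced cyclic order; (ii) for each interval factor whose corresponding output boundary cycle consists of a single directed edge, the endpoint subfaces $\{0\}$ and $\{1\}$ are similarly identified. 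One also needs to verify that these exhaust the failures of injectivity.

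The next step invokes the standard combinatorial property of barycentric subdivision: each closed small simplex in the barycentric subdivision of $\Delta^n$ is encoded by a flag $F_0 \subsetneq F_1 \subsetneq \cdots \subsetneq F_k$ of faces of $\Delta^n$ with vertices at the barycenters $\hat F_j$, and since only $F_0$ can be zero-dimensional, each closed small simplex contains at most one vertex of $\Delta^n$; the analogous statement for $[0,1]$ is immediate. Consequently every refined closed cell $\tau \subset c_G$, as a product of closed small simplices and closed small intervals, contains at most one element of each identified pair $\{v_{i,0}, v_{i,n_i}\}$ and at most one element of each identified pair $\{0,1\}$. Hence $\tau$ never simultaneously meets both sides of any identification in (i) or (ii), so $\Phi_G|_\tau$ is injective, and by compactness of $\tau$ a homeomorphism onto its image in $\sdbar$.

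The main obstacle I anticipate is the first step: rigorously establishing that (i) and (ii) exhaust all failures of injectivity of $\Phi_G$. The preceding text exhibits these as examples and implicitly treats them as complete, but the formal argument requires ruling out more subtle coincidences, such as those that might arise from simultaneous collapses on several input circles, from several output-edge degenerations happening at once, or from nontrivial automorphisms of the underlying marked fatgraph. Once this case analysis is carried out, regularity of the refined structure drops out essentially for free from the separating property of barycentric subdivision.
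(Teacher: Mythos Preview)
Your approach is essentially the same as the paper's: isolate the identifications made by $\Phi_G$ on $\partial c_G$ as factor-wise (first/last vertex of each simplex factor, and endpoints of certain interval factors), then observe that after barycentrically subdividing each factor no small product cell can meet both members of any identified pair, so the restricted characteristic maps are injective and hence homeomorphisms. The paper's proof is in fact terser than yours---it simply asserts the factor-wise description of the identifications from the preceding paragraph and states that ``no two faces of a single cell are identified''---so the gap you flag (that (i) and (ii) exhaust all failures of injectivity, and in particular that automorphisms or multi-factor coincidences introduce nothing new) is not addressed there either; your flag-of-faces justification for why a small simplex contains at most one original vertex is a welcome addition of detail.
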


\begin{proof}
Let $c_G = \Delta^{n_1} \times \Delta^{n_2} \times \cdots \times \Delta^{n_k} \times [0,1]^{N_\ell}$. We saw in the proof of proposition \ref{sdbarCW} that the characteristic map $\Phi_G: c_G \to \sdbar$ identifies faces of $c_G$ corresponding with the first and last 0-cell of each simplex factor and it identifies the endpoints of any interval factor corresponding to an output marked point lying in the interior of a directed edge where the output consists of a single chord. When each factor is replaced by its barycentric subdivision, the new cells are again products of simplices and intervals but in this decomposition, no two faces of a single cell are identified in $\sdbar$.
\end{proof}

In either decomposition, a cell $c$ is a product of simplices and intervals. By identifying the unit interval $[0,1]$ with the standard $1$-simplex $\Delta^1$, we see that $c$ is a product of simplices.

\subsection{The space $U\sdbar$}

In this section we construct a space $U\sdbar$ and a map $$\pi: U\sdbar \to \sdbar$$ such that for $x_\Gamma \in \sdbar$, $\pi^{-1}(x_\Gamma)$ is the string diagram $\Gamma$. 
We construct $U\sdbar$ and $\pi$ cell by cell.
We use the first cell decomposition of $\sdbar$ described in the previous section, in which cells are indexed by combinatorial types of string diagrams.

\begin{definition}
Consider the cell $c_G$ of $\sdbar$ labeled by the fatgraph $G$, and let $\{e\}$ denote the set of chords of $G$.
Let $(t,p) \in c_G$ determine a string diagram $\Gamma$ with a corresponding collection of chords $\{e_\Gamma\}$ and attaching maps $$\{\varphi_{e_\Gamma}: \{0, 1\} \to \sqcup_k S^1\}.$$ The cell complex
$U\sdbar(c_G)$ is given by
$$ \left( \left(c_G \times \bigsqcup_{\{e\}} I_e \right) \sqcup \left( c_G \times \bigsqcup_k S^1\right) \right)  / \sim$$ 
where for all $(t,p, 0)$ and $(t,p, 1) \in c_G \times  \partial (I_e)$, 
$$(t,p, 0) \sim (t,p, \varphi_{e_\Gamma}(0))  \in c_G \times \sqcup_k S^1$$
and
$$(t,p, 1) \sim (t,p, \varphi_{e_\Gamma}(1)) \in c_G \times \sqcup_k S^1.$$
\end{definition}

\begin{definition}
Let $\pi_{c_G}: U\sdbar(c_G) \to c_G$ be the map induced  by the projection:
$$\tilde{\pi}_{c_G}: \left(c _G \times\bigsqcup_{\{e\}} I_e \right) \sqcup \left( c_G \times \bigsqcup_k S^1\right)  \to c_G.$$
\end{definition}

Let $(t,p)$ be a point in $c_G$.
Then $(t,p)$ determines a string diagram $\Gamma$,  and  $\pi_{c_G}^{-1}(t,p)$ is the metric graph underlying $\Gamma$. 
We wish to identify $\pi_{c_G}^{-1}(t,p)$ with the string diagram $\Gamma$.
To do so, we must endow $\pi_{c_G}^{-1}(t,p)$ with a fatgraph structure and a marking of each output boundary cycle.
The cell $c_G$ is labeled by the fatgraph $G$ so $\pi_{c_G}^{-1}(t,p)$ has a canonical fatgraph structure for all $(t,p)$ in $c_G$. 
To promote this fatgraph structure to the structure of a string diagram, we must choose a marked point on each output boundary.
If in $G$ a point marking an output lies at a vertex, then we mark the corresponding vertex of $\pi_{c_G}^{-1}(t,p)$. 
If it lies in the interior of a directed edge, then we mark the corresponding directed edge of 
the 
output cycle of $\pi_{c_G}^{-1}(t,p)$ according to the $p_i$ coordinate of $x_\Gamma$.

Recall that input circles are marked by the $0$-cell of our model of $S^1$.

\begin{remark}
If $c_{G'}$ is a face of $c_{G}$ with inclusion map $i_{G',G}: c_{G'} \hookrightarrow c_G$, then for all $(t',p') \in c_{G'}$, $\pi_{c_{G'}}^{-1}(t',p')$ is a string diagram canonically isomorphic to the string diagram $\pi_{c_{G}}^{-1}(i(t',p'))$.
\end{remark}

\begin{definition}
Let $\tilde{i}_{G,G'}: \pi_{c_{G'}}^{-1}(c_{G'}) \to \pi_{c_G}^{-1}(i_{G',G}(c_{G'}))$ be the unique map such that 
\begin{itemize}
\item $\tilde{i}_{G',G} \circ \pi_{c_G} = \pi_{c_{G'}} \circ i_{G',G}$
\item $\tilde{i}_{G',G}$ restricts to the canonical isomorphism $\pi_{c_{G'}}^{-1}(t',p') \to \pi_{c_{G}}^{-1}(i(t',p'))$.
\end{itemize}
\end{definition}

\begin{definition}
Let $\{c_G\}$ be the collection of cells of $\sdbar$.
$$U\sdbar = \left( \bigsqcup_{c_G} U\sdbar(c_G) \right) / \sim$$
where if
 $c_{G'}$ a face of $c_G$,
 $(t',p') \in c_{G'}$,
 $y' \in \pi_{c_{G'}}^{-1}(t',p' ),  \tilde{i}_{G',G}(y') \in \pi_{c_G}^{-1}(i(t',p') )$,
 then
 $y' \sim \tilde{i}(y')$.
\end{definition}

\begin{remark}
Since the diagram
\begin{displaymath}
    \xymatrix{U\sdbar(c_{G'}) \ar[rr]^{\tilde{i}_{G',G}} \ar[d]_{\pi_{c_{G'}}}
    & & U\sdbar(c_{G}) \ar[d]^{\pi_{c_{G}}}      \\
               c_{G'} \ar[rr]^{i_{G', G}} \ar[rd]_{\Phi_{G'}}
             &  & c_G \ar[ld]^{\Phi_{G}} \\
             &  \sdbar &\\
              }
\end{displaymath}
commutes, the map $$\bigsqcup_G (\Phi_G \circ \pi_{c_G}): \bigsqcup_G U\sdbar(c_G) \to \sdbar$$ is well defined on $U\sdbar.$
\end{remark}

\begin{definition}
Let $\pi: U\sdbar \to \sdbar$ be the well-defined map $\bigsqcup_G \Phi_G \circ \pi_{c_G}$.
\end{definition}

\section{The String Topology Construction}\label{STC}
Let \(M\) be a compact, oriented, Riemannian manifold of dimension $d$. 
Let $g$, $k$, and $\ell$ be integers such that $g\ge 0$, $k>0$, and $\ell>0$ and such that:
\[ \chi := -(2-2g-k-\ell) \ge 1 . \]
The integer $\chi$ is minus the Euler characteristic of a Riemann surface of genus $g$ with $k+\ell$ boundary components.

Let $R$ be a commutative ring with 1.
Let \(\mathcal{C}_*(\sdbar)\) denote the cellular chain complex of the regular cell complex \(\sdbar\) with $R$ coefficients, and let $C_*$ denote the singular chain functor with $R$ coefficients.
We will suppress the indices $g$,$k$, and $\ell$ and write $\sdbar$ for $\sdbar$.
Let $LM$ denote the free loop space of $M$, and $LM^k$ and $LM^\ell$ denote the $k$ and $\ell$-fold Cartesian products of $LM$.

In this section we define a chain map
\[ \mathcal{ST}: \mathcal{C}_*(\sdbar) \otimes C_*(LM^k) \longrightarrow C_*(LM^\ell). \]
We will define the map on a generator of the free $R$-module
\[ \mathcal{C}_m(\sdbar) \otimes C_n(LM^k) \]
and then show that extending this map linearly produces a chain map. 
A generator of
\[ \mathcal{C}_m(\sdbar) \otimes C_n(LM^k) \]
is a pair $(c,\s)$, where $c$ is an $m$-cell of $\sdbar$ and $\s$ is a singular $n$-simplex of $LM^k$.
We will define a series of chain maps from $C_*(c \times \Delta^n)$ to $C_*(LM^\ell)$.
We will define $\mathcal{ST}(c,\s)$ to be the image of a certain chain in $C_{n+m}(c \times \Delta^n)$ under this series of chain maps.

\begin{prop}\label{prop adjoint}
The set 
\(\hbox{Maps}(\Delta^n,LM^k)\)
is a basis for the free $R$-module
$C_n(LM^k)$.
Let $\sqcup_{k} S^1$ denote the disjoint union of $k$ copies of $S^1$.
Then \(\hbox{Maps}(\Delta^n,LM^k)\) is isomorphic to the set
\[\M(\Delta^n \times \sqcup_{k} S^1, M).\]
\end{prop}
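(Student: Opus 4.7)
The first assertion is essentially the definition of the singular chain complex, so I would dispose of it in one sentence: by definition, $C_n(X; R)$ is the free $R$-module on the set of continuous maps $\Delta^n \to X$, and applying this to $X = LM^k$ gives the claim.

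For the second assertion, the plan is to produce the exponential (currying) bijection in two stages. First, I would identify $LM^k = \M(S^1, M)^k$ with $\M(\sqcup_k S^1, M)$: a $k$-tuple of continuous maps $\gamma_i : S^1 \to M$ assembles into a single continuous map $\gamma : \sqcup_k S^1 \to M$ whose restriction to the $i$-th component is $\gamma_i$, and this correspondence is clearly a bijection of sets (and in fact a homeomorphism once one observes the universal property of the disjoint union in $\mathbf{Top}$). Second, I would apply the standard exponential law to obtain a bijection
\[ \M(\Delta^n, \M(\sqcup_k S^1, M)) \;\cong\; \M(\Delta^n \times \sqcup_k S^1, M), \]
sending $f \mapsto \hat f$, where $\hat f(t, x) = f(t)(x)$.

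The only delicate point is continuity of the adjoint maps in both directions. For $f \mapsto \hat f$ to be continuous, one needs the mapping space $\M(\sqcup_k S^1, M)$ to carry the compact-open topology (which is how $LM$ is topologized in this paper), together with local compactness of $\sqcup_k S^1$; both hold. For $\hat f \mapsto f$ to be continuous, one similarly uses local compactness of $\Delta^n$ (or of $\sqcup_k S^1$). I would note in passing that $\Delta^n$ and $\sqcup_k S^1$ are compact Hausdorff so no compactly generated subtleties arise. This is the only place any real verification is required, and it is standard point-set topology; everything else is formal.

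Combining the two bijections gives the desired identification $\M(\Delta^n, LM^k) \cong \M(\Delta^n \times \sqcup_k S^1, M)$, proving the proposition.
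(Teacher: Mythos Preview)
Your proof is correct and follows exactly the same two-step route as the paper: identify $LM^k \cong \M(\sqcup_k S^1, M)$ and then apply the exponential law. If anything, you supply more justification than the paper does, since the paper's proof is just the three-line chain of equalities with no discussion of the point-set hypotheses.
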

\begin{proof}
\begin{eqnarray*}
\hbox{Maps}\left(\Delta^n,LM^k\right) &=& \M\left(\Delta^n, \M(S^1,\,M)^k\right)\\
&=& \M\left(\Delta^n, \M(\sqcup_{k} S^1,\, M) \right)\\
&=& \M\left(\Delta^n \times \sqcup_{k} S^1,\, M\right)
\end{eqnarray*}
\end{proof}
We will abuse notation let $\s$ denote both a singular simplex of $LM^k$ and an element of $\M(\Delta^n \times \sqcup_{k} S^1, M)$.

Fix a generator $(c,\s)$ of $\mathcal{C}_m(\sdbar) \otimes C_n(LM^k)$, so that $c$ is an $m$-cell of $\sdbar$ and 
\[\s: \Delta^n \times \sqcup_{k} S^1 \longrightarrow M.\]

\subsection{The Thom class representative}
Let $\delta: M \to M \times M$ denote the diagonal map.
Then the $\chi$-fold Cartesian product of $\delta$ is a multi-diagonal map
\[\delta^\chi: M^\chi \to M^{2\chi}.\]
Let $D=\delta^\chi(M^\chi)$.
The manifold $M$ is Riemannian, and the metric $g$ induces a topological metric $d_g$ on $M$.
Let $d$ denote the metric on $M^{2\chi}$ defined by
\[ d((x_1, \ldots, x_{2\chi}),(y_1, \ldots, y_{2\chi})) = \mbox{max}\{ d_g(x_i,y_i)\,\, | \,\, 1 \le i \le 2\chi\}.\]
\begin{definition}For a positive real number $\e$,
\[ N_\e := \{ x \in M^{2\chi} \,\, | \,\, d(x,D) < \e\}.\]
\end{definition}\begin{prop}\label{prop epsilon}
For each point
\[ (x_1,y_1, \ldots , x_\chi, y_\chi) \in N_\e, \] 
there is a point 
\[(w_1,w_1, \ldots , w_\chi,w_\chi)\in D\] 
such that for each $i$
\[ d_g(x_i,w_i) < \e \;\;\; \mbox{ and } \;\;\; d_g(y_i,w_i) < \e.\]
\end{prop}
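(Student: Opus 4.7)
The plan is to prove this essentially by unpacking the definitions of $N_\e$ and the max metric $d$.

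First, I would observe that since $(x_1, y_1, \ldots, x_\chi, y_\chi) \in N_\e$, by the definition of $N_\e$ we have $d((x_1, y_1, \ldots, x_\chi, y_\chi), D) < \e$. By the definition of $d(\cdot, D)$ as an infimum, this means that there exists a point of $D$, which we can write in the form $(w_1, w_1, \ldots, w_\chi, w_\chi)$ using the description $D = \delta^\chi(M^\chi)$, with
\[ d\bigl((x_1, y_1, \ldots, x_\chi, y_\chi),\, (w_1, w_1, \ldots, w_\chi, w_\chi)\bigr) < \e. \]
(If the infimum equals some value strictly less than $\e$, then by the definition of infimum we can pick an element of $D$ whose distance lies strictly between that value and $\e$.)

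Next, I would unpack this using the definition of the max metric on $M^{2\chi}$. Pairing up coordinates $(x_i, y_i)$ with $(w_i, w_i)$, this distance equals
\[ \max\bigl\{ d_g(x_i, w_i),\, d_g(y_i, w_i) \,\bigm|\, 1 \le i \le \chi \bigr\}, \]
and the maximum being strictly less than $\e$ forces each of the finitely many terms to be strictly less than $\e$. This yields both inequalities $d_g(x_i, w_i) < \e$ and $d_g(y_i, w_i) < \e$ for every $i$, which is exactly the claimed conclusion.

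No substantive obstacle is expected: the proposition is essentially a translation lemma, asserting that closeness in the ambient max metric on $M^{2\chi}$ to the multi-diagonal $D$ is equivalent to the existence of a single common point $w_i$ in each paired factor that is close to both $x_i$ and $y_i$. The only point requiring a hint of care is the passage from ``$\inf < \e$'' to ``some element is $< \e$,'' which is immediate from the definition of infimum.
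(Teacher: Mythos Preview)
Your proof is correct and follows essentially the same approach as the paper: unwind the definition of $N_\e$ to get a point of $D$ within $\e$, then use the max metric to extract the coordinatewise inequalities. The only cosmetic difference is that the paper invokes compactness of $D$ to pick an actual minimizer of the distance, whereas you use the bare definition of infimum to find some point at distance less than $\e$; your version is slightly more elementary and entirely sufficient here.
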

\begin{proof}
If 
\[ (x_1,y_1, \ldots , x_\chi, y_\chi) \in N_\e,\]
then
\[ d\left((x_1,y_1,\ldots , x_\chi, y_\chi), D\right) < \e.\]
Since $D$ is compact, there is a point
\[ (w_1, w_1, \ldots , w_\chi, w_\chi) \in D\]
which minimizes
\[ \left\{d\left((x_1,y_1, \ldots , x_\chi, y_\chi), w\right) \,\, | \,\, w \in D \right\}.\]
(The point $w$ need not be unique.)
In particular,
\[d\left((x_1,y_1, \ldots , x_\chi, y_\chi),(w_1, w_1, \ldots , w_\chi, w_\chi)\right) < \e \]
in the metric on $M^{2\chi}$.
Thus for each $i$,
\[ d_g(x_i,w_i) < \e \;\;\; \mbox{ and } \;\;\;  d_g(y_i,w_i) < \e.\]
\end{proof}

Let $[U]\in H^{\chi d}(M^{2\chi})$ denote the Thom class of the normal bundle of $D \subset M^{2\chi}$.
For small $\e$, $N_\e$ is diffeomorphic to the total space of this normal bundle.
In particular, if $\e$ is less than the injectivity radius of $(M,g)$, then $N_\e$ is diffeomorphic to the total space of the normal bundle. (See \cite{milnorstasheff}.)
\begin{definition}Let $\e$ denote the one half of the injectivity radius of $M$.
\end{definition}The Thom class $[U]$ can be represented by a cocycle 
\[ C^{\chi d}(M^{2\chi}, M^{2\chi} \setminus N_\frac{\e}{2}).\]
By the excision axiom, the inclusion
\[ 
(M^{2\chi} \setminus ( M^{2\chi} \setminus N_{\e}), (M^{2\chi} \setminus N_\frac{\e}{2}) \setminus ( M^{2\chi} \setminus N_\e)) \hookrightarrow (M^{2\chi}, M^{2\chi} \setminus N_\frac{\e}{2})\]
induces an isomorphism in cohomology.
Thus the Thom class $[U]$ can be represented by a cocycle in
\begin{eqnarray*}
 &&C^{\chi d}(M^{2\chi} \setminus ( M^{2\chi} \setminus N_{\e}), (M^{2\chi} \setminus N_\frac{\e}{2}) \setminus ( M^{2\chi} \setminus N_\e))\\
&=& C^{\chi d}(N_\e, N_\e \setminus N_\frac{\e}{2}) 
\end{eqnarray*}
We fix such a representative $U$.

\subsection{The evaluation map.}
We now define an evaluation map which will be used in the string topology construction.
A point $x_\Gamma \in \sdbar$ corresponds to a string diagram $\Gamma$.
Recall that such a string diagram is a CW-complex built by attaching $\chi$ copies of the interval $I$ to $k$ copies of the circle $S^1$.
Let $e_i(\Gamma)$ denote the $i$-th chord and let $\varphi_{e_i(\Gamma)}$ denote the $i$-th attaching map, so that 
\[ \varphi_{e_i(\Gamma)}: \{0,1\} \to \ks.\]
For each $1 \le i \le \chi$ and $j \in \{0,1\}$, we define a map
by the formula
\[
\begin{array}{lll}
 \tau_{i,j}:& \sdbar &\to \ks\\
 & x_\Gamma &\mapsto \varphi_{e_i(\Gamma)}(j).
\end{array}
\]
That is to say, $ \tau_{i,0}(x_\Gamma)$ is the initial vertex of the $i$-th chord of $\Gamma$, and $ \tau_{i,1}(x_\Gamma)$ is final vertex of the $i$-th chord of $\Gamma$.
Precomposing with $\tau_{i,j}$ gives a new map $\bar{\tau}_{i,j}$:
\begin{eqnarray*}
\bar{\tau}_{i,j}: \M(\ks, M) &\to& \M(\sdbar,M) \\
	f &\mapsto& f \tau_{i,j}.\\
\end{eqnarray*}
For every $n \ge 0$, each $\bar{\tau}_{i,j}$ induces a map
\[ ev(n,i,j): \M(\ks \times \Delta^n, M) \to \M(\sdbar \times \Delta^n ,M).\]
To be more explicit,
\begin{eqnarray*}
 ev(n,i,0)(\s)(x_\Gamma,t)  &=& \s\left(\varphi_{e_i(\Gamma)}(0),t\right)\\
&=& \s(\mbox{intital vertex of }i\mbox{-th chord of } \Gamma,t)
\end{eqnarray*}
and 
\begin{eqnarray*}
 ev(n,i,1)(\s)(x_\Gamma,t)  &=& \s\left(\varphi_{e_i(\Gamma)}(1),t\right)\\
&=& \s(\mbox{final vertex of }i\mbox{-th chord of } \Gamma,t).
\end{eqnarray*}
For each $n$, the product of the $2\chi$ maps $ev(n,i,j)$ is a map
\[ ev^n: \M(\ks \times \Delta^n, M) \to \M(\sdbar \times \Delta^n ,M^{2\chi}).\]
\begin{definition}\label{def barS}
Let $c$ be a subset of $\sdbar$ and let $\e>0$, and let 
\[ \s: \ksd \to M.\]
We define
\[ \tilde{S}_\e(c,\sigma) := \{ (x_\Gamma,t) \in c \times \Delta^n \,\, | \,\, ev^n(\s)(x_\Gamma,t) \in N_\e\}.\]
When $c$ and $\s$ are clear from context --- as is the case throughout this section, where they denote a fixed cell of $\sdbar$ and singular simplex of $M$ --- we will call this set $\tilde{S}_\e$.
\end{definition}\begin{prop}\label{prop within e}
Let $(x_\Gamma,t) \in \tilde{S}_\e(c,\sigma)$.
There exists a point \[w=(w_1,w_1, \ldots, w_\chi,w_\chi)\in D\]
such that for each $i$,
\[ d_g(\s(\varphi_{e_i(\Gamma)}(0),t),w_i) < \e \;\;\; \mbox{ and } \;\;\; d_g(\s(\varphi_{e_i(\Gamma)}(1),t),w_i) < \e.\]
\end{prop}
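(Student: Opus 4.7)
The plan is to unpack the definitions and reduce directly to the preceding Proposition about points of $N_\e$ (the one stated just above: \emph{for each point $(x_1, y_1, \ldots, x_\chi, y_\chi) \in N_\e$ there is a point $(w_1, w_1, \ldots, w_\chi, w_\chi) \in D$ satisfying the stated $\e$-inequalities}). There is essentially no content beyond checking that the coordinates produced by $ev^n(\s)$ are exactly the quantities $\s(\varphi_{e_i(\Gamma)}(j), t)$.

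First, I would write out what membership in $\tilde S_\e(c, \s)$ means: by Definition \ref{def barS}, $(x_\Gamma, t) \in \tilde S_\e$ is equivalent to $ev^n(\s)(x_\Gamma, t) \in N_\e \subset M^{2\chi}$. Next, I would expand $ev^n(\s)(x_\Gamma, t)$ using its definition as the product of the $2\chi$ maps $ev(n, i, j)$, obtaining the explicit $2\chi$-tuple
\[
ev^n(\s)(x_\Gamma, t) = \bigl(\s(\varphi_{e_1(\Gamma)}(0), t),\ \s(\varphi_{e_1(\Gamma)}(1), t),\ \ldots,\ \s(\varphi_{e_\chi(\Gamma)}(0), t),\ \s(\varphi_{e_\chi(\Gamma)}(1), t)\bigr).
\]

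Now I apply the preceding Proposition to this point with the identifications $x_i = \s(\varphi_{e_i(\Gamma)}(0), t)$ and $y_i = \s(\varphi_{e_i(\Gamma)}(1), t)$. The proposition immediately yields a point $(w_1, w_1, \ldots, w_\chi, w_\chi) \in D$ satisfying $d_g(x_i, w_i) < \e$ and $d_g(y_i, w_i) < \e$ for each $i$, which is precisely the desired conclusion.

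Since the argument is a one-step application of a result that has just been proved, there is no real obstacle; the only care required is making sure the coordinate ordering on $M^{2\chi}$ used to define the multi-diagonal $\delta^\chi$ matches the coordinate ordering produced by the product of the maps $ev(n, i, j)$, so that the diagonal pairs $(w_i, w_i)$ correspond correctly to the pairs $(\s(\varphi_{e_i(\Gamma)}(0), t), \s(\varphi_{e_i(\Gamma)}(1), t))$. This is a matter of fixing conventions rather than substantive mathematics.
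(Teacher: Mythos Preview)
Your proposal is correct and matches the paper's proof essentially verbatim: the paper unpacks the definition of $\tilde S_\e$, writes out $ev^n(\s)(x_\Gamma,t)$ as the explicit $2\chi$-tuple, and invokes Proposition~\ref{prop epsilon} to produce the point $w$. Your remark about coordinate ordering is a reasonable extra check, but the paper does not dwell on it.
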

\begin{proof}
If $(x_\Gamma,t)$ lies in $ \tilde{S}_\e(c,\sigma)$, then by definition 
\[ ev^n(\s)(x_\Gamma,t) := \left(\s(\varphi_{e_1(\Gamma)}(0),t),\s(\varphi_{e_1(\Gamma)}(1),t), \,\, \ldots\,\, , \s(\varphi_{e_\chi(\Gamma)}(0),t),\s(\varphi_{e_\chi(\Gamma)}(1),t)\right)\]
lies in $ N_\e$.\
Thus by Proposition \ref{prop epsilon}, there exists a point $w\in D$ such that each $\s(\varphi_{e_i(\Gamma)}(0),t)$ and $\s(\varphi_{e_i(\Gamma)}(1),t)$ lie in a ball of radius epsilon in $M$ centered at $w_i$.
\end{proof}

\subsection{The fundamental chain of $c \times \Delta^n$.}\label{section c}
The topological space $c$ is a product of simplices, 
so the space $c \times \Delta^n$ is homeomorphic to $D^{n+m}$, and 
\[ H_{n+m}\left( c \times \Delta^n, \partial (c \times \Delta^n) \right)= R. \]
Let $j_\#$ denote the quotient map:
\[ j_\#: C_{n+m}(c \times \Delta^n) \to C_{n+m}( c \times \Delta^n, \partial (c \times \Delta^n)).\]
We would like to choose a cycle in $\mu \in C_{n+m}( c \times \Delta^n)$ such that $j_\#(\mu)$ represents a generator in \(H_{n+m}( c \times \Delta^n, \partial (c \times \Delta^n))\).
We now define the cycle explicitly.

The $m$-cell $c$ is a product of simplices, and so can be written as:
\[ c = c^1 \times \ldots \times c^p\]
where each factor $c^r$ is a simplex of dimension $j_r$, and 
\[ j_1 + \ldots + j_{p} = m.\]
The vertices of each simplex $c^r$ are ordered, so there is a unique ordered simplicial map
\[ \mu_{c^r}: \Delta^{j_r} \to c^r.\]
Moreover, this map is an element of $C_{j_r}(c^r)$.
Thus there is a singular chain given by the tensor product of simplicial maps:
\[ \mu_{c^1} \otimes \ldots \otimes \mu_{c^p} \in C_{j_1}(c^1) \otimes \ldots \otimes C_{j_{p}}(c^{p}).\]
Recall the theorem of Eilenberg and Zilber which states that the bifunctors
\[ \{X,Y\} \mapsto C_*(X) \otimes C_*(Y)\]
and 
\[ \{X,Y\} \mapsto C_*(X \times Y)\]
are naturally quasi-isomorphic~\cite{ez}.
Let $EZ$ denote the natural Eilenberg-Zilber quasi-isomorphism
\[ C_*(X) \otimes C_*(Y)\xlongrightarrow{EZ} C_*(X \times Y).\]
\begin{definition}\label{def muc}
Let $\mu_c$ denote the image of \( \mu_{c^1} \otimes \ldots \otimes \mu_{c^p}\) under the composition:
\[ C_{j_1}(c^{1}) \otimes \ldots \otimes C_{j_{p}}(c^{p})  \xlongrightarrow{EZ} C_m\left(c^{1} \times \ldots \times c^{p}\right) = C_m(c).\]
\end{definition}\begin{definition}\label{def 1n}
Let 
\[ 1_n: \Delta^n \to \Delta^n \]
denote the identity map, which is an element of $C_n(\Delta^n)$.
\end{definition}\begin{definition}\label{def mu}
Let $\mu_{c \times \Delta^n}$ denote the image of $\mu_c \otimes 1_n$ under the Eilenberg-Zilber map
\[ C_m (c) \otimes C_n(\Delta^n) \xlongrightarrow{EZ} C_{m+n}(c \times \Delta^n).\]
\end{definition}This chain $\mu_{c \times \Delta^n}$ is the desired chain in $C_{m+n}(c \times \Delta^n)$.

\subsection{The chain maps used to define $\mathcal{ST}$.}
We now define a series of chain maps, such that \(\mathcal{ST}(c,\s)\) is the image of $\mu_{c\times\Delta^n}$ under the composition of the maps in the series. 
The inclusion map of pairs
\[j: (c \times \Delta^n, \varnothing) \hookrightarrow (c \times \Delta^n, c \times \Delta^n \setminus \tilde{S}_\frac{\e}{2})\]
induces the quotient map $j_\#$:
\[C_*(c \times \Delta^n) \xlongrightarrow{j_\#} C_*(c \times \Delta^n, c \times \Delta^n \setminus \tilde{S}_\frac{\e}{2}).\] 
This map $j_\#$ is our first chain map.
The second map is excision:

\begin{align*}
C_*(c \times \Delta^n, &c \times \Delta^n \setminus \tilde{S}_\frac{\e}{2}) \\
&\xlongrightarrow{s} C_*\left(c \times \Delta^n \setminus \left(c \times \Delta^n \setminus \tilde{S}_\e\right), \left(c \times \Delta^n \setminus \tilde{S}_\frac{\e}{2}\right)\setminus \left(c \times \Delta^n\setminus \tilde{S}_\e\right)\right)\\
&= C_*( \tilde{S}_\e,  \tilde{S}_\e\setminus \tilde{S}_\frac{\e}{2}).
\end{align*}

More precisely, $s$ is a chain homotopy inverse to the quasi-isomorphism induced by the inclusion
\[\left(c \times \Delta^n \setminus \left(c \times \Delta^n \setminus \tilde{S}_\e\right), \left(c \times \Delta^n \setminus \tilde{S}_\frac{\e}{2}\right)\setminus \left(c \times \Delta^n\setminus \tilde{S}_\e\right)\right) \hookrightarrow (c \times \Delta^n, c \times \Delta^n \setminus \tilde{S}_\frac{\e}{2}).\]
See, for example, \cite[Proposition 2.21]{hatcher} for an explicit formula for $s$.

Next we must cap with the Thom class representative.
For a fixed $c$ and $\s$, the restriction of the evaluation is:
\[ ev^n(\s)|_{c\times \Delta^n}:c\times \Delta^n \to M^{2\chi}. \]
\begin{prop}\label{prop whut}
The further restriction of the evaluation map to $\tilde{S}_\e$  is a map of pairs:
\[ev^n(\s)|_{\tilde{S}_\e}: (\tilde{S}_\e,  \tilde{S}_\e\setminus \tilde{S}_\frac{\e}{2}) \to (N_\e,  N_\e\setminus N_\frac{\e}{2}).\] 
We will denote this restriction by $ev_c$.
\end{prop}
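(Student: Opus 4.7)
The plan is to unpack the definitions and observe that the proposition follows essentially tautologically. Recall that by Definition \ref{def barS},
\[ \tilde{S}_\e = \{(x_\Gamma,t) \in c \times \Delta^n \mid ev^n(\s)(x_\Gamma,t) \in N_\e\},\]
and similarly for $\tilde{S}_\frac{\e}{2}$ with $N_\frac{\e}{2}$ in place of $N_\e$. So the set $\tilde{S}_\e$ is precisely the preimage of $N_\e$ under $ev^n(\s)$ restricted to $c \times \Delta^n$, and $\tilde{S}_\frac{\e}{2}$ is the preimage of $N_\frac{\e}{2}$.

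First I would verify the easy inclusion: $ev^n(\s)$ maps $\tilde{S}_\e$ into $N_\e$ by the defining property of $\tilde{S}_\e$. This shows that the restriction $ev^n(\s)|_{\tilde{S}_\e}$ is well-defined as a map $\tilde{S}_\e \to N_\e$.

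Next I would check that this restriction sends $\tilde{S}_\e \setminus \tilde{S}_\frac{\e}{2}$ into $N_\e \setminus N_\frac{\e}{2}$. Let $(x_\Gamma, t) \in \tilde{S}_\e \setminus \tilde{S}_\frac{\e}{2}$. Then $(x_\Gamma, t) \in \tilde{S}_\e$, so by the previous step $ev^n(\s)(x_\Gamma, t) \in N_\e$. Moreover $(x_\Gamma, t) \notin \tilde{S}_\frac{\e}{2}$, which by the very definition of $\tilde{S}_\frac{\e}{2}$ as the preimage of $N_\frac{\e}{2}$ means $ev^n(\s)(x_\Gamma, t) \notin N_\frac{\e}{2}$. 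Thus $ev^n(\s)(x_\Gamma, t) \in N_\e \setminus N_\frac{\e}{2}$, establishing that we have a map of pairs.

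There is no real obstacle here; the statement is essentially a formal consequence of $\tilde{S}_\e$ and $\tilde{S}_\frac{\e}{2}$ being defined as preimages. The main purpose of the proposition is to record that $ev_c$ can be applied in the next step to pull back the Thom class cocycle $U \in C^{\chi d}(N_\e, N_\e \setminus N_\frac{\e}{2})$ and cap with chains in $C_*(\tilde{S}_\e, \tilde{S}_\e \setminus \tilde{S}_\frac{\e}{2})$ to produce the next chain map in the string topology construction.
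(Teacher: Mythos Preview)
Your proof is correct and takes essentially the same approach as the paper, which simply notes in one sentence that $\tilde{S}_\e$ is defined to be the preimage of $N_\e$ under $ev^n(\s)$. You have merely unpacked this observation in more detail, explicitly verifying both halves of the map-of-pairs condition.
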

\begin{proof}
For $\tilde{S}_\e$ is defined to be the preimage of $N_\e$ under $ev^n(\s)$.
\end{proof}

We pullback the Thom cocycle $U$ by $ev_c$ to get a class 
\[ev_c^*(U)\in C^{\chi d}(\tilde{S}_\e,  \tilde{S}_\e\setminus \tilde{S}_\frac{\e}{2}).\]
The next map in our sequence is the cap product with this Thom class:
\[ C_*( \tilde{S}_\e,  \tilde{S}_\e\setminus \tilde{S}_\frac{\e}{2}) \xlongrightarrow{\cap ev_c^*(U)} C_{*-\chi d}(\tilde{S}_\e).\]

\subsection{Mapping string diagrams to $M$ using geodesics.}
The next map is the heart of the construction.
\begin{definition}\label{def mapssm}
Let $S \subset \sdbar$.
We define a space $\MM(S,M)$ as follows.
As a set,
\[ \MM(S,M) := \bigsqcup_{x_\Gamma \in S} \M(\Gamma, M).\]
Let 
\( p: USD \to SD\)
be the projection map.
The topology on $\MM(S,M)$ is generated by open sets of the following form:
\[ \left\{W \subset \M(p^{-1}(V),M)\,\, | \,\, V \mbox{ is an open set in }S\right\}.\]
\end{definition}
\begin{remark}A neighborhood of a point $f: \Gamma \to M$ in $\MM(S,M)$ is an open set
\[  W \subset \M(p^{-1}(V),M)\]
such that $V$ is a neighborhood of $x_\Gamma$ in $S$ and 
\[ F|_\Gamma = f\]
for some $F \in W$.
\end{remark}\begin{definition}\label{def pi}
Let 
\[\pi: c \times \Delta^n \to c\]
be the projection map.
Set
\[ S_\e := \pi(\tilde{S}_\e).\]
\end{definition}
We define a map of spaces
\[ \alpha^{in}: \tilde{S}_\e \to \MM(S_\e,M).\]
In order to do so, we must consider geodesics in $M$.
\begin{prop}\label{prop geos}
For each $1 \le i \le \chi$, there is a unique geodesic segment 
\[ \gamma_i: I \to M\]
such that:
\begin{eqnarray*}
\gamma_i(0) &=& \varphi_{e_i(\Gamma)}(0)\\
\gamma_i(1) &=& \varphi_{e_i(\Gamma)}(1).
\end{eqnarray*}
\end{prop}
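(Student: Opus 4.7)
The plan is to reduce the claim to the standard Riemannian-geometry fact that two points less than the injectivity radius apart are joined by a unique short geodesic, using Proposition~\ref{prop within e} to control the distance between the endpoints. Since strictly speaking $\varphi_{e_i(\Gamma)}(j)$ is a point on $\sqcup_k S^1$ rather than in $M$, the statement is understood with the endpoints interpreted as $p_j := \s(\varphi_{e_i(\Gamma)}(j),t)$ for $j=0,1$; this is the composition that makes $\gamma_i$ meaningful as a geodesic in $M$, and is the reading that will be used when $\gamma_i$ is assembled into the map $\alpha^{in}$.

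Given $(x_\Gamma,t)\in\tilde{S}_\e$, Proposition~\ref{prop within e} produces a common point $w_i \in M$ with $d_g(p_0,w_i) < \e$ and $d_g(p_1,w_i) < \e$. The triangle inequality then yields
\[ d_g(p_0,p_1) < 2\e, \]
and by the definition of $\e$ the quantity $2\e$ is precisely the injectivity radius of $(M,g)$. Consequently $\exp_{p_0}: T_{p_0}M \to M$ restricts to a diffeomorphism from the open ball of radius $2\e$ in $T_{p_0}M$ onto an open neighborhood of $p_0$ that contains $p_1$. There is therefore a unique vector $v \in T_{p_0}M$ with $|v|<2\e$ and $\exp_{p_0}(v) = p_1$, and setting $\gamma_i(s) := \exp_{p_0}(sv)$ for $s\in I$ produces a constant-speed geodesic segment with $\gamma_i(0) = p_0$ and $\gamma_i(1) = p_1$.

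For uniqueness, any geodesic segment $\gamma_i': I \to M$ from $p_0$ to $p_1$ is of the form $s \mapsto \exp_{p_0}(sv')$ where $v' := \dot{\gamma}_i'(0)\in T_{p_0}M$; the requirement that $\gamma_i'$ be a short geodesic (equivalently, that $|v'|<2\e$) together with $\exp_{p_0}(v')=p_1$ forces $v'=v$ by the injectivity of $\exp_{p_0}$ on the $2\e$-ball, so $\gamma_i'=\gamma_i$. The ``main obstacle'' is essentially non-existent: everything is arranged by the earlier choice of $\e$ as half the injectivity radius. The one thing worth being careful about in writing it up is the uniqueness clause --- on a general compact manifold longer geodesics between $p_0$ and $p_1$ certainly exist (wrapping around once or more), so the statement must be read as asserting uniqueness of the short geodesic, which is the one singled out by the exponential-map construction above and the only one that fits into the rest of the string topology construction.
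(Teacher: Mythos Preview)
Your argument is correct and follows essentially the same route as the paper: apply Proposition~\ref{prop within e} and the triangle inequality to bound $d_g$ between the two endpoints by $2\e$, then invoke the standard fact about unique geodesics below the injectivity radius. The paper simply cites this last step as a standard Riemannian-geometry fact rather than unpacking the exponential-map construction as you do, and your observation about interpreting the endpoints as $\s(\varphi_{e_i(\Gamma)}(j),t)$ is a valid clarification of a minor notational abuse in the statement.
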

Here $\varphi_{e_i(\Gamma)}(0)$ and $\varphi_{e_i(\Gamma)}(1)$ are the initial and final endpoints of the $i$-th chord of $\Gamma$.
This proposition says that there is a unique geodesic segment which starts at the initial vertex of the $i$-chord and ends and the final vertex of the $i$-th chord of $\Gamma$.
\begin{proof}
Since $(x_\Gamma,t)$ lies in $\tilde{S}_\e(c,\sigma)$, Proposition \ref{prop within e} says that there is a point 
\[ (w_1,w_1,\,\, \ldots \,\, ,w_\chi,w_\chi) \in D\]
such that $\s(\varphi_{e_i(\Gamma)}(0),t)$ and $\s(\varphi_{e_i(\Gamma)}(1),t)$ lie in a ball of radius $\e$ in $M$ centered at some $w_i$.
By the triangle inequality,
\begin{eqnarray*}
 d_g\left(\s(\varphi_{e_i(\Gamma)}(0),t), \s(\varphi_{e_i(\Gamma)}(1),t)\right) &\le& d_g\left(\s(\varphi_{e_i(\Gamma)}(0),t),w_i\right) + d_g\left(w_i,\s(\varphi_{e_i(\Gamma)}(1),t)\right)\\
&<& 2\e.
\end{eqnarray*}
Since $M$ is a complete Riemannian manifold with injectivity radius $2\e$, there is a unique geodesic segment
\[ \gamma_i: I \to M\]
such that 
\begin{eqnarray*}
\gamma_i(0) &=& \varphi_{e_i(\Gamma)}(0)\\
\gamma_i(1) &=& \varphi_{e_i(\Gamma)}(1).
\end{eqnarray*}
(This is a standard fact; see, for example, \cite[Theorem 14]{peterson}.)
\end{proof}
We define a map 
\[ \alpha^{in}: \tilde{S}_\e \to \MM(S_\e,M).\]
A point $(x_\Gamma,t)\in\tilde{S}_\e$ is sent to a map \[f_{x_\Gamma,t}:\Gamma \to M.\]
The graph $\Gamma$ is composed of metric, oriented, circles and chords.
Each circle is canonically identified with the standard circle $S^1$, and each chord $e_i$ is canonically identified with the standard interval $I$.
Thus to define a map $\Gamma \to M$, it suffices to define maps
\[ \ks \to M \]
and maps
\[ \sqcup_{i=1}^k e_i \to M\]
which agree at the attaching points of the chords.
\begin{definition}\label{def g}
We define
\begin{eqnarray*}
\tilde{S}_\e &\xlongrightarrow{\alpha^{in}}& \MM(S_\e,M)\\
(x_\Gamma,t)&\mapsto& f_{(x_\Gamma,t)}:\Gamma \to M.
\end{eqnarray*}
The map $ f_{(x_\Gamma,t)}$ is given by pasting together the following maps.
Since
\[ \s: \ks \times \Delta^n \to M,\]
for each $t\in \Delta^n$ we have a map
\[ \s_t: \ks \to M.\]
On input circles, we apply $\s_t$:
\[ f_{(x_\Gamma,t)}|_{\substack{\ks}} \equiv \s_t.\]
On the $i$-th chord, we follow the geodesic $\gamma^i$:
\[ f_{(x_\Gamma,t)}|_{\substack{e_i}} \equiv \gamma^i.
\]
Since $\gamma^i(0)= \s_t\left(\varphi_{e_i(\Gamma)}(0)\right)$ and $\gamma^i(1)= \s_t\left(\varphi_{e_i(\Gamma)}(1)\right)$, the maps agree on chord endpoints and paste together to give a well-defined map
\[ f_{(x_\Gamma,t)}: \Gamma \to M.\]
\end{definition}

The final map in the construction uses output boundary cycles to go from $\MM(S_e,M)$ to $LM^\ell$.
\begin{definition}We define a map
\[out: \MM(S_\e,M) \to LM^\ell\]
Recall from section \ref{sdbarUSD} that for any string diagram $\Gamma$, there is a map
\[b_\Gamma: \els \to \Gamma\]
which maps the $i$-th circle onto the $i$-th output boundary cycle of $\Gamma$ by reversing orientation.
Given a map 
\[f:\Gamma \to M,\] 
define
\[ out(f): \els \xlongrightarrow{b_\Gamma}\Gamma \xlongrightarrow{f} M.\]
\end{definition}

\subsection{The map $\mathcal{ST}$.}
\begin{definition}\label{def stc}
Consider the following composition of maps:
\begin{eqnarray*}
C_*(c \times \Delta^n) &\xlongrightarrow{j_\#}& C_*\left(c \times \Delta^n, c \times \Delta^n \setminus \tilde{S}_\frac{\e}{2}\right) \\
&\xlongrightarrow{s}& C_*\left(c \times \Delta^n \setminus \left(c \times \Delta^n \setminus \tilde{S}_\e\right), \left(c \times \Delta^n \setminus \tilde{S}_\frac{\e}{2}\right)\setminus \left(c \times \Delta^n\setminus \tilde{S}_\e\right)\right)\\
&=& C_*\left( \tilde{S}_\e,  \tilde{S}_\e\setminus \tilde{S}_\frac{\e}{2}\right)\\
&\xlongrightarrow{\cap ev_c^*(U)}& C_{*-\chi d}\left(\tilde{S}_\e\right)\\
&\xlongrightarrow{\alpha^{in}_\#}& C_{*-\chi d}\left(\MM(S_e,M)\right)\\
&\xlongrightarrow{out_\#}& C_{*-\chi d}\left(LM^\ell\right).
\end{eqnarray*}
We denote this composition $\st_{(c,\s)}$.
We define $\mathcal{ST}(c,\s)$ to be the image of $\mu_{c \times \Delta^n}$ under this composition:
\begin{eqnarray*}
\mathcal{ST}(c,\s) 
&:=& \st_{(c,\s)}(c,\s)\\
&:=& out_\# \circ \alpha^{in}_\# \left( s\circ j_\# (\mu_{c \times \Delta^n})\cap ev_c^*U\right).
\end{eqnarray*}
\end{definition}
\section{$\mathcal{ST}$ is a chain map.}
In this section we check that $\mathcal{ST}$ is a chain map.
The graded module 
\[\mbox{Hom}\left(\mathcal{C}_*(\sdbar) \otimes C_*(LM^k), C_*(LM^\ell)\right)\]
is a chain complex with differential
\[ d_{\mbox{Hom}} f := \partial f - (-1)^{\mbox{deg}(f)} f d_\otimes.\]
This choice of signs for the differential ensures that a 0-cycle is a chain map and that a 0-boundary is a null-homotopic chain map.
\begin{theorem}\label{thm chain}
The map
\[ \mathcal{ST}: \mathcal{C}_*(\sdbar) \otimes C_*(LM^k) \longrightarrow C_*(LM^\ell) \]
satisfies
\[ d_{\mbox{Hom}}(\mathcal{ST}) = 0.\]
\end{theorem}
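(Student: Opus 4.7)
The plan is to reduce the chain map property to two ingredients: (i) for each fixed pair $(c,\s)$, the composition $\st_{(c,\s)}: C_*(c \times \Delta^n) \to C_{*-\chi d}(LM^\ell)$ from Definition \ref{def stc} is itself a chain map, and (ii) a naturality statement for this composition under face inclusions. Ingredient (i) is immediate because each constituent of $\st_{(c,\s)}$ --- the quotient $j_\#$, the excision inverse $s$, the cap product with the cocycle $ev_c^*(U)$, $\alpha^{in}_\#$, and $out_\#$ --- is a chain map. Since $\mathcal{ST}(c,\s) := \st_{(c,\s)}(\mu_{c\times\Delta^n})$, this gives
\[ \partial\, \mathcal{ST}(c,\s) \;=\; \st_{(c,\s)}\bigl(\partial \mu_{c\times\Delta^n}\bigr), \]
and the theorem reduces to identifying the right-hand side with $\mathcal{ST}(\partial c, \s) + (-1)^m \mathcal{ST}(c, \partial \s)$.

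Applying the Leibniz rule for the Eilenberg--Zilber map to $\mu_{c\times\Delta^n}=EZ(\mu_c \otimes 1_n)$ yields
\[ \partial \mu_{c\times\Delta^n} \;=\; EZ(\partial \mu_c \otimes 1_n) \;+\; (-1)^m\, EZ(\mu_c \otimes \partial 1_n). \]
Since $c$ is a product of ordered simplices with fundamental chain $\mu_c$ built again out of $EZ$, a further expansion writes $\partial \mu_c$ as a signed sum $\sum_{c'} \pm\, (\iota^{c'})_\#\, \mu_{c'}$ over codimension-one face inclusions $\iota^{c'}: c' \hookrightarrow c$, with signs matching the cellular boundary of $c$ in the regular cell complex $\sdbar$. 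Likewise $\partial 1_n = \sum_i (-1)^i (d_i)_\# 1_{n-1}$ is the standard simplicial boundary, and $\s \circ (\mathrm{id}_{\ks} \times d_i)$ is exactly the $i$-th singular face of $\s$, which I will denote $d_i \s$.

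The core step is the naturality claim: for any codimension-one face inclusion $\iota: c' \hookrightarrow c$ and any face $d_i: \Delta^{n-1} \to \Delta^n$, one should verify
\[ \st_{(c,\s)} \circ (\iota \times \mathrm{id})_\# \;=\; \st_{(c',\s)}, \qquad \st_{(c,\s)} \circ (\mathrm{id}_c \times d_i)_\# \;=\; \st_{(c,\, d_i \s)}. \]
Both are direct from unwinding Definitions \ref{def barS}, \ref{def g}, and \ref{def stc}: one has $\tilde S_\e(c',\s) = \tilde S_\e(c,\s) \cap (c' \times \Delta^n)$; the Thom cocycle satisfies $\iota^*(ev_c^*U) = ev_{c'}^*U$ by functoriality of pullback; and both the geodesic-pasting map $\alpha^{in}$ and the output map $out$ are defined pointwise at $(x_\Gamma, t)$ and hence are compatible with restriction. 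The $\Delta^n$-side is symmetric, using that $ev^n$ is natural in the simplex factor. Plugging these identifications into the two Leibniz summands produces $\mathcal{ST}(\partial c, \s)$ and $(-1)^m \mathcal{ST}(c, \partial \s)$ respectively.

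The principal obstacle is sign bookkeeping: one must reconcile (a) the cellular boundary signs of $c$ in the regular CW structure on $\sdbar$, (b) the shuffle signs produced when expanding $\partial \mu_c$ via iterated Eilenberg--Zilber on a product of ordered simplices, and (c) the differential $d_\otimes$ on the tensor product. The use of the regular subdivision established in the lemma before Section \ref{STC} is essential here: in the original non-regular CW structure certain faces of a cell $c_G$ are identified under the characteristic map $\Phi_G$, and the naive identification of $\partial \mu_c$ with the cellular boundary of $c$ would fail; after subdivision every boundary face embeds, so the Eilenberg--Zilber boundary realizes the cellular boundary on the nose, modulo the expected shuffle signs that must be tracked through the naturality identities above.
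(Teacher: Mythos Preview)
Your outline is essentially the paper's own proof: the paper packages your ingredient (ii) as two lemmas (naturality of $\st_{(c,\s)}$ under the face inclusions $\partial_{rs}\times 1_n$ and $1_c\times\partial_i$) and your Leibniz expansion of $\partial\mu_{c\times\Delta^n}$ as a third lemma, then assembles them exactly as you describe. One concrete omission: capping with a degree-$\chi d$ cocycle is not a chain map on the nose but satisfies $\partial(\alpha\cap ev_c^*U)=(-1)^{\chi d}(\partial\alpha)\cap ev_c^*U$, so the correct identity is $\partial\,\mathcal{ST}(c,\s)=(-1)^{\chi d}\st_{(c,\s)}(\partial\mu_{c\times\Delta^n})$, and this $(-1)^{\chi d}$ is precisely what makes $d_{\mathrm{Hom}}(\mathcal{ST})=0$ for a map of degree $-\chi d$; you dropped it on both sides, which is the main sign you should track.
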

\begin{remark}Let $\partial$ denote the singular differential, let $d$ denote the cellular differential in $\mathcal{C}_*(\sdbar)$, and let $d_\otimes$ denote the differential in $\mathcal{C}_*(\sdbar) \otimes C_*(LM^k)$.
Then the statement is:
\begin{eqnarray*}
 \partial \mathcal{ST}(c,\s) &=& (-1)^{\chi d} \mathcal{ST}d_\otimes(c,\s)\\
&=&(-1)^{\chi d} \left( \mathcal{ST} (dc,\s) + (-1)^m \mathcal{ST} (c,\partial \s)\right).
\end{eqnarray*}
\end{remark}The idea of the proof is as follows.
The chain $\mathcal{ST}(c,\s)$ is defined to be $\st_{(c,\s)}(\mu_{c\times \Delta^n})$, where $\st_{(c,\s)}$ is a composition of chain maps and $\mu_{c\times \Delta^n}$ is a chain in $C_{n+m}(c\times \Delta^n)$.
More precisely, $\mu_{c\times \Delta^n}$ represents a generator of $H_{n+m}(c\times \Delta^n, \partial(c\times \Delta^n))$.
Thus $\mu_{c\times \Delta^n}$ should be thought of as a ``fundamental chain'' of $c\times \Delta^n$.We show that in the appropriate sense, ``the boundary of fundamental chain is the fundamental chain of the boundary''.
The precise statement is Lemma \ref{nate lem3}.

The other issue is that the map $\st_{(c,\s)}$ depends on $c$ and $\s$.
The boundary of the chain $\mu_{c\times \Delta^n}$ has terms coming corresponding to the faces of $c$ and of $\Delta^n$.
We must check that applying $\st_{(c,\s)}$ to a term in the boundary of the chain $\mu_{c\times \Delta^n}$ coming from a face $\partial c$ of $c$ gives the same result as applying $\st_{(\partial c,\s)}$ to $\mu_{\partial c \times \Delta^n}$.
Similarly, we must check that applying $\st_{(c,\s)}$ to a term in the boundary of the chain $\mu_{c\times \Delta^n}$ coming from a face $\partial \s$ of $\s$ gives the same result as applying $\st_{(c, \partial \s)}$ to $\mu_{c \times \partial \Delta^n}$.
The precise statements are Lemmas \ref{nate lem2} and \ref{nate lem1}.

Now we proceed with the proof.
First we fix some notation.
Let $c$ denote a fixed $m$-cell of $\sdbar$ and let $\sigma$ denote a fixed singular $n$-simplex of $LM^k$.
As discussed in section \ref{section c}, the cell $c$ is a product of simplices
\[ c = c^1 \times \, \ldots \, \times c^p,\]
where $c^r$ is a simplex of dimension $j_r$.
Since the dimension of $c$ is $m$, we have
\[ \sum_{j=1}^p j_r = m.\]
Moreover, the vertices of each simplex factor are ordered.
Let $\partial_s c^r$ denote the face of $c^r$ given by omitting the $s$-th vertex, and let $ \partial_{rs} c $ denote the product
\[c^1 \times \, \ldots \, \times \partial_s c^r \times \, \ldots\, \times c^p.\]
There is a unique ordered simplicial map
\[ \partial_{rs}: \partial_{rs} c = \left( c^1 \times \, \ldots \, \times \partial_s c^r \times \, \ldots\, \times c^p\right) \to \left( c_1 \times \, \ldots \,\times c^r \times \,\ldots \, \times c^p\right)=c.\]
Crossing with the identity on $\Delta^n$ gives a map:
\[ \partial_{rs} \times 1_n: \partial_{rs} c \times \Delta^n \to c \times \Delta^n.\]
Let 
\[ \partial_i: \Delta^{n-1} \to \Delta^{n} \]
denote the unique map of ordered simplices which omits the $i$-th vertex.
Crossing with the identity on $c$ give a map:
\[ 1_c \times \partial_i: c \times \Delta^{n-1} \to c \times \Delta^n.\]

Using these maps, we able to state the three lemmas from which the theorem follows.
\begin{lemma}\label{nate lem3}
Recall from Definitions~\ref{def muc},~\ref{def 1n}, and ~\ref{def mu} the chain
\[ \mu_{c \times \Delta^n} := EZ(\mu_{c^1} \otimes \ldots \otimes \mu_{c^p} \otimes 1_n) \in C_{n+m}(c \times \Delta^n).\]
Let 
\[ \epsilon(r,s) = s + \sum_{u=1}^{r-1} j_u .\]
Then we have:
\[\partial \mu_{c \times \Delta^n} = 
\sum_{r=1}^p \sum_{s=1}^{j_r} (-1)^{\epsilon(r,s)}(\partial_{rs}\times 1_n)_\# \left(\mu_{\partial_{rs} c \times \Delta^n}\right)
+ (-1)^m \sum_{i=1}^n (-1)^i (1_c \times \partial_i)_\# (\mu_{c\times\Delta^{n-1}}).\]
\end{lemma}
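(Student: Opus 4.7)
The strategy is a direct calculation based on the fact that the Eilenberg--Zilber map is a natural chain map. Since
\[ \mu_{c \times \Delta^n} = EZ(\mu_{c^1} \otimes \cdots \otimes \mu_{c^p} \otimes 1_n), \]
the plan is to compute $\partial \mu_{c \times \Delta^n} = EZ\bigl(d_\otimes(\mu_{c^1} \otimes \cdots \otimes \mu_{c^p} \otimes 1_n)\bigr)$, expand $d_\otimes$ by the graded Leibniz rule, substitute the standard boundary formulae for the ordered simplicial identifications $\mu_{c^r}$ and for $1_n$, and then re-assemble the result using naturality of $EZ$ so that each summand is recognized as a pushforward of a fundamental chain of a face.

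Explicitly, I would first apply Leibniz to obtain
\[ d_\otimes(\mu_{c^1} \otimes \cdots \otimes \mu_{c^p} \otimes 1_n) = \sum_{r=1}^p (-1)^{\sum_{u<r} j_u}\, \mu_{c^1} \otimes \cdots \otimes \partial\mu_{c^r} \otimes \cdots \otimes \mu_{c^p} \otimes 1_n + (-1)^m\, \mu_{c^1} \otimes \cdots \otimes \mu_{c^p} \otimes \partial 1_n, \]
where $(-1)^m$ arises because $j_1 + \cdots + j_p = m$. Then, because $\mu_{c^r}:\Delta^{j_r} \to c^r$ is the ordered simplicial identification, its singular boundary is the usual alternating sum $\partial \mu_{c^r} = \sum_s (-1)^s (\iota_{rs})_\#\mu_{\partial_s c^r}$, with $\iota_{rs}:\partial_s c^r \hookrightarrow c^r$ the inclusion; similarly $\partial 1_n = \sum_i (-1)^i (\partial_i)_\#(1_{n-1})$.

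Applying $EZ$ term by term and using its naturality with respect to the maps $\iota_{rs} \times \mathrm{id}$ and $\mathrm{id} \times \partial_i$ allows us to pull the inclusions outside, so each summand of the first type becomes $(-1)^{\epsilon(r,s)}(\partial_{rs}\times 1_n)_\# EZ(\mu_{c^1}\otimes \cdots \otimes \mu_{\partial_s c^r}\otimes \cdots \otimes \mu_{c^p}\otimes 1_n) = (-1)^{\epsilon(r,s)}(\partial_{rs}\times 1_n)_\#\mu_{\partial_{rs}c\times \Delta^n}$, and each summand of the second type becomes $(-1)^{m+i}(1_c \times \partial_i)_\#\mu_{c\times\Delta^{n-1}}$. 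This is exactly the right-hand side of the lemma.

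The main obstacle is careful sign bookkeeping: the Koszul signs from the Leibniz rule must combine correctly with the alternating signs from each simplicial boundary to produce $\epsilon(r,s) = s + \sum_{u<r} j_u$ and the $(-1)^{m+i}$ factor. A secondary technical point is that we must use naturality of $EZ$ in each tensor factor separately; this is standard for any natural chain equivalence, but should be stated cleanly. Apart from this, no substantial new idea is needed: the lemma is really the statement that the Eilenberg--Zilber image of the product of fundamental chains behaves like a fundamental chain of the product, facetwise.
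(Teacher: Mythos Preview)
Your proposal is correct and follows essentially the same argument as the paper: both compute $\partial\mu_{c\times\Delta^n}$ by using that $EZ$ is a chain map, expanding the tensor differential by the Leibniz rule, and then invoking naturality of $EZ$ to identify each summand with the pushforward of the appropriate face's fundamental chain. The only cosmetic difference is the order of presentation---the paper first establishes the naturality identities $(\partial_{rs}\times 1_n)_\#\mu_{\partial_{rs}c\times\Delta^n}=EZ(\mu_{c^1}\otimes\cdots\otimes\partial_s\mu_{c^r}\otimes\cdots\otimes 1_n)$ and the analogous one for $\partial_i$, and then performs the boundary computation---whereas you do the boundary computation first and invoke naturality at the end.
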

\begin{proof}
Since the Eilenberg-Zilber map is a natural transformation, the following diagram commutes.

\(
\xymatrix {
C_*(c^1)\otimes \ldots \otimes C_*(\partial_s c^r) \otimes \ldots \otimes C_*(c^p)\otimes C_*(\Delta^n) 
\ar[d]^-{(1_{c^r})_\# \otimes \ldots \otimes (\partial_s)_\# \otimes \ldots \otimes (1_{c^p})_\# \otimes (1_n)_\#} 
\ar[r]^-{EZ}
& C_*(c^1 \times \, \ldots \, \times \partial_s c^r \times \, \ldots\, \times c^p \times \Delta^n) \ar[d]^-{(\partial_{rs}\times 1_n)_\#}\\
 C_*(c^1)\otimes \ldots \otimes C_*(c^r)\otimes \ldots \otimes  C_*(c^p)\otimes C_*(\Delta^n) 
\ar[r]^-{EZ}
 &C_*(c_1 \times \, \ldots \,\times c^r \times \,\ldots \, \times c^p \times \Delta^n).
}
\)
Thus
\begin{eqnarray*}
(\partial_{rs}\times 1_n)_\# \left(\mu_{\partial_{rs} c \times \Delta^n}\right) &:=&
(\partial_{rs}\times 1_n)_\# EZ \left( \mu_{c^1} \olo \mu_{ \partial_s c^r} \olo  \mu_{c^p} \otimes 1_n\right)\\
&=& EZ \left( (1_{c^r})_\#\mu_{c^1} \olo  (\partial_s)_\# \mu_{ \partial_s c^r} \olo(1_{c^p})_\#\mu_{c^p} \otimes (1_n)_\# 1_n\right).
\end{eqnarray*}
Since the identity map on spaces induces the identity map on chains, we have:
\begin{eqnarray*}
(\partial_{rs}\times 1_n)_\# \left(\mu_{\partial_{rs} c \times \Delta^n}\right) &=&
EZ \left(\mu_{c^1} \olo (\partial_s)_\# \mu_{ \partial_s c^r} \olo\mu_{c^p} \otimes 1_n \right).
\end{eqnarray*}
Now, $(\partial_s)_\# \mu_{ \partial_s c^r}$ is the map
\begin{equation}\label{one}
\Delta^{j_r-1} \xlongrightarrow{\mu_{ \partial_s c^r}}\partial_s c^r  \xlongrightarrow{\partial_s} c^r.
\end{equation}
Here $\mu_{ \partial_s c^r}$ is the canonical simplicial map of $j_r$-dimensional simplices with ordered vertices, and $\partial_s$ is the simplicial map from a $j_r-1$ simplex to a $j_r$ simplex with omits the $s$-th vertex.
Conversely, the term $\partial_s \mu_{c^r}$ which appears in the simplicial boundary of $\mu_{c^r}$ is the map
\begin{equation}\label{two}
 \Delta^{j_r-1} \xlongrightarrow{\partial_s} \Delta^{j_r} \xlongrightarrow{\mu_{ \partial_s c^r}} c^r.
\end{equation}
Now, the compositions (\ref{one}) and (\ref{two}) are the same simplicial map, so we have
\begin{equation}\label{three}
(\partial_{rs}\times 1_n)_\# \left(\mu_{\partial_{rs} c \times \Delta^n}\right) = 
EZ \left(\mu_{c^1} \olo \partial_s \mu_{c^r} \olo\mu_{c^p} \otimes 1_n \right)
\end{equation}
in $C_{n+m-1}(c \times \Delta^n)$.

A completely analogous argument, again using the naturality of the Eilenberg-Zilber map, shows that:
\begin{equation}\label{four}
(1_c \times \partial_i)_\# (\mu_{c\times\Delta^{n-1}}) = EZ \left(\mu_{c^1}\olo \mu_{c^p} \otimes \partial_i 1_n\right)
\end{equation}
in $C_{n+m-1}(c \times \Delta^n)$.

We compute:
\begin{eqnarray*}
\partial \mu_{c \times \Delta^n} 
&:=& \partial EZ \left( \mu_{c^1} \otimes \ldots \otimes \mu_{c^p}\otimes 1_n\right) \\
&=& EZ \partial\left(\mu_{c^1} \otimes \ldots \otimes \mu_{c^p}\otimes 1_n\right)\\
&=& EZ \Big( \partial\left(\mu_{c^1} \otimes \ldots \otimes \mu_{c^p}\right)\otimes 1_n 
+ (-1)^m \left( \mu_{c^1} \otimes \ldots \otimes \mu_{c^p}\right)\otimes \partial 1_n\Big)\\
&=& EZ \Big( \sum_{r=1}^p (-1)^{\sum_{u=1}^{r-1} j_u}\mu_{c^1} \otimes \ldots \otimes \partial \mu_{c^r}\otimes \ldots \otimes \mu_{c^p}\otimes 1_n 
\\
&\;& \hspace{4cm}
+ (-1)^m \mu_{c^1} \otimes \ldots \otimes \mu_{c^p}\otimes \partial 1_n \Big)\\
&=& EZ \Big( \sum_{r=1}^p \sum_{s=1}^{j_r} (-1)^{\epsilon(r,s)} \mu_{c^1} \otimes \ldots \otimes \partial_s \mu_{c^r} \otimes \ldots \otimes \mu_{c^p} 
\\
&\;&\hspace{4cm}+ 
(-1)^m \sum_{i=1}^n (-1)^i \mu_{c^1} \otimes \ldots \otimes \mu_{c^p}\otimes \partial_i 1^n \Big)\\
&=& \sum_{r=1}^p \sum_{s=1}^{j_r} (-1)^{\epsilon(r,s)} EZ \left( \mu_{c^1} \otimes \ldots \otimes \partial_s \mu_{c^r} \otimes \ldots \otimes \mu_{c^p}\right) 
\\
&\;&\hspace{4cm}
+ (-1)^m \sum_{i=1}^n (-1)^i EZ \left( \mu_{c^1} \otimes \ldots \otimes \mu_{c^p}\otimes \partial_i 1^n \right)\\
&=& \sum_{r=1}^p \sum_{s=1}^{j_r} (-1)^{\epsilon(r,s)} (\partial_{rs}\times 1_n)_\# (\mu_{\partial_{rs}c \times \Delta^n}) + (-1)^m \sum_{i=1}^n (-1)^i(1_c \times \partial_i)_\# (\mu_{c\times\Delta^{n-1}}).
\end{eqnarray*}
The final equality follows from (\ref{three}) and (\ref{four}).
\end{proof}

We proceed with the next lemma.
\begin{lemma}\label{nate lem2}
The following diagram commutes.\\
\begin{center}
\(
\xymatrix{
C_{n+m-1}(c \times \Delta^n) \ar[r]^{\st_{(c,\s)}}& C_{n+m-1-\chi d}(LM^\ell)\\
C_{n+m-1}(\partial_{rs} c \times \Delta^{n}) \ar[u]^{( \partial_{rs}\times 1_n)_\#} \ar[ur]_{\st_{(\partial_{rs} c,\s)}}
}
\)
\end{center}
\end{lemma}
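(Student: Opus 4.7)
The plan is to recognize that this lemma is a pure naturality statement. Writing $i := \partial_{rs} \times 1_n : \partial_{rs}c \times \Delta^n \hookrightarrow c \times \Delta^n$, we need to show $\st_{(c,\s)} \circ i_\# = \st_{(\partial_{rs}c,\s)}$. Since $\st_{(c,\s)}$ is a composition of five chain maps, I would verify step by step that each one is natural with respect to $i$.

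The foundational observation, from which everything else flows, is that the subspaces $\tilde{S}_\e$ are natural under sub-cell inclusion. Because $\tilde{S}_\e(c',\s)$ is defined as the preimage of $N_\e$ under the evaluation $ev^n(\s)|_{c' \times \Delta^n}$, and because $ev^n(\s)|_{\partial_{rs}c \times \Delta^n}$ equals $ev^n(\s)|_{c \times \Delta^n} \circ i$, one has
\[
\tilde{S}_\e(\partial_{rs}c,\s) = i^{-1}\bigl(\tilde{S}_\e(c,\s)\bigr),
\]
and likewise for $\tilde{S}_{\e/2}$. Thus $i$ restricts to maps of pairs at every stage, and in particular $ev_{\partial_{rs}c}$ factors as $ev_c \circ i|_{\tilde{S}_\e(\partial_{rs}c,\s)}$.

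With this in hand, each link in the composition is standard. The quotient map $j_\#$ is natural in the pair, giving commutativity with $i_\#$. The excision map $s$ is natural because it is defined by iterated barycentric subdivision, a natural chain operator (and the subdivision chain homotopy $D$ is also natural, so any ambiguity is up to chain homotopy compatible across both sides). For the cap product with the Thom class, the factorization above gives $ev_{\partial_{rs}c}^*(U) = i^* ev_c^*(U)$, and the projection formula $i_\#(x \cap i^* u) = i_\# x \cap u$ does the work. The map $\alpha^{in}$ is defined pointwise at each $(x_\Gamma,t)$ using only the string diagram $\Gamma$, the restriction $\s_t$, and the geodesics in $M$ — none of which see the ambient cell — so $\alpha^{in}_c \circ i = \alpha^{in}_{\partial_{rs}c}$. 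Finally, $out$ depends only on a point of $\MM(S_\e,M)$, not on any choice of source cell. Composing these five naturality squares yields the diagram of the lemma.

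The main subtlety is the excision step: $s$ is not induced by a continuous map but by a choice of barycentric subdivision iterate, so one must be slightly careful to choose this $s$ consistently for both $c$ and $\partial_{rs}c$ (the same $N$ iterates of subdivision work for the sub-cell because the relevant open cover pulls back). Beyond this, no genuine geometric input is required — the lemma is entirely formal once the key identity $\tilde{S}_\e(\partial_{rs}c,\s) = i^{-1}(\tilde{S}_\e(c,\s))$ is noted.
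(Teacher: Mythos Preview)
Your proposal is correct and follows essentially the same approach as the paper: both decompose $\st_{(c,\s)}$ into its five constituent chain maps and verify naturality of each with respect to $i=\partial_{rs}\times 1_n$, all resting on the identity $\tilde{S}_\e(\partial_{rs}c,\s)=i^{-1}(\tilde{S}_\e(c,\s))$. The paper in fact argues that the excision square commutes strictly (not merely up to chain homotopy), precisely because the relevant open cover on $\partial_{rs}c\times\Delta^n$ is the pullback of the one on $c\times\Delta^n$, so the same explicit subdivision formula for $s$ applies on both sides --- exactly the point you make in your final paragraph.
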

In words, the lemma says that applying $\st_{(\partial_{rs} c,\s)}$ gives the same result as first including $\partial_{rs}c \times \Delta^{n}$ as a face of $c \times \Delta^n$ and then applying $\st_{(c,\s)}$.
\begin{proof}
Recall that $\st_{(c,\s)}$ and $\st_{(\partial_{rs} c,\s)}$ are compositions of the several maps of Definition~\ref{def stc}.
We show the above diagram commutes by showing that various maps induced by $(\partial_{rs}\times 1_n)_\#$ commute with each of the maps of Definition~\ref{def stc}.
More precisely, we will show that the following diagram commutes:
\begin{equation}\label{dia foo}
\xymatrix{
C_{n+m-1}(c \times \Delta^n) \ar[r]^-{j_\#} & \bullet \ar[r]^-s & \bullet \ar[rr]^{\cap ev_c*(U)} && \bullet  \ar[r]^-{\alpha^{in}_\#} & \bullet \ar[r]^-{out_\#}& C_{n+m-1-\chi d}(LM^\ell). \\
C_{n+m-1}(\partial_{rs} c \times \Delta^n) \ar[u]_-{(\partial_{rs}\times 1_n)_\#} \ar[r]^-{j_\#} & \bullet\ar[u]_-{(\partial_{rs}\times 1_n)_\#} \ar[r]^-s & \bullet\ar[u]_-{(\partial_{rs}\times 1_n)_\#} \ar[rr]^{\cap ev_{\partial_{rs} c}*(U)} && \bullet \ar[u]_-{(\partial_{rs}\times 1_n)_\#} \ar[r]^-{\alpha^{in}_\#} & \ar[u]_-{(\partial_{rs})_\#}\bullet \ar[ur]_-{out_\#}
}
\end{equation}
The names of some of the entries in the diagram are suppressed to make the diagram easier to read.
The four squares and the triangle in the above diagram are Diagrams (\ref{dia two}), (\ref{dia three}), (\ref{dia four}), (\ref{dia eight}), and (\ref{dia ten}) below.
Since 
\[\st_{(c,\s)}(x) := out_\# (\alpha_{in})_\# \big( (sj_\# x) \cap ev_c^*U\big),\]
the commutativity of (\ref{dia foo}) implies the lemma.

We now proceed with the proof.
Recall from Definition~\ref{def barS} the space $\tilde{S}_\e(c,\s)$ associated to a pair $(c,\s)$:
\[ \tilde{S}_\frac{\e}{2}(c,\s) = (ev^n(\s)|_{c\times\Delta^n})^{-1}(N_\frac{\e}{2}), \]
where 
\[ ev^n(\s): \sdbar \times \Delta^n \to M^{2\chi}.\]
The following diagram commutes:
\begin{equation}\label{dia one}
\xymatrix{
c \times \Delta^n \ar[r]^{ev^n(\s)|_{c\times\Delta^n}} & M^{2\chi}.\\
\partial_{rs} c \times \Delta^n \ar[u]^{\partial_{rs} \times 1_n} \ar[ur]_{ev^n(\s)|_{\partial_{rs} c\times \Delta^n}}
}
\end{equation}
Thus, $\partial_{rs}\times 1_n$ maps $\partial_{rs} c \times \Delta^n \setminus \tilde{S}_\frac{\e}{2}(\partial_{rs} c,\s)$ into $ c \times \Delta^n \setminus \tilde{S}_\frac{\e}{2}(c,\s)$.
Therefore the map $\partial_{rs}\times 1_n$ gives a well-defined map of pairs
\[ \partial_{rs}\times 1_n: \left(\partial_{rs} c \times \Delta^{n}, \partial_{rs} c \times \Delta^n \setminus \tilde{S}_\frac{\e}{2}(\partial_{rs} c,\s)\right) \to \left(c \times \Delta^n, c \times \Delta^n \setminus \tilde{S}_\frac{\e}{2}(c,\s)\right).\]
Thus the following diagram, which is the leftmost square in (\ref{dia foo}), commutes
\begin{equation}\label{dia two}
\xymatrix{
C_{n+m-1}(c \times \Delta^n) \ar[r]^-{j_\#} 
& C_{n+m-1}\left(c \times \Delta^n, c \times \Delta^n \setminus \tilde{S}_\frac{\e}{2}(c,\s)\right)\\
C_{n+m-1} (\partial_{rs} c \times \Delta^{n}) \ar[r]^-{j_\#} \ar[u]^{( \partial_{rs}\times 1_n)_\#}
& C_{n+m-1}\left(\partial_{rs} c \times \Delta^{n}, \partial_{rs} c \times \Delta^n \setminus \tilde{S}_\frac{\e}{2}(\partial_{rs} c,\s)\right). \ar[u]^{( \partial_{rs}\times 1_n)_\#}
}
\end{equation}
Using the commutativity of (\ref{dia one}), we see that $\partial_{rs}\times 1_n$ maps $\tilde{S}_\e(\partial_{rs} c,\s)$ into $\tilde{S}_\e(c,\s)$ and maps $\tilde{S}_\e(\partial_{rs} c,\s) \setminus \tilde{S}_\frac{\e}{2}(\partial_{rs} c,\s)$ into $\tilde{S}_\e(c,\s)\setminus \tilde{S}_\frac{\e}{2}(c,\s)$.
Thus, $\partial_{rs}\times 1_n$ restricts to a well-defined map of pairs:
\[ \partial_{rs}\times 1_n: \left(\tilde{S}_\e(\partial_{rs} c,\s), \tilde{S}_\e(\partial_{rs} c,\s) \setminus \tilde{S}_\frac{\e}{2}(\partial_{rs} c,\s)\right) \to \left(\tilde{S}_\e(c,\s), \tilde{S}_\e(c,\s)\setminus \tilde{S}_\frac{\e}{2}(c,\s)\right).\]

Consider second square in (\ref{dia foo}).
\begin{equation}\label{dia three}
\xymatrix{
C_{n+m-1}\left(c \times \Delta^n, c \times \Delta^n \setminus \tilde{S}_\frac{\e}{2}(c,\s)\right)
\ar[r]^-{s} 
& C_{n+m-1}\left( \tilde{S}_\e(c,\s), \tilde{S}_\e(c,\s) \setminus \tilde{S}_\frac{\e}{2}(c,\s)\right)\\
C_{n+m-1}\left(\partial_{rs} c \times \Delta^{n}, \partial_{rs} c \times \Delta^n \setminus \tilde{S}_\frac{\e}{2}(\partial_{rs} c,\s)\right) \ar[u]^{( \partial_{rs}\times 1_n)_\#} \ar[r]^-{s}
& C_{n+m-1}\left(\tilde{S}_\e(\partial_{rs} c,\s), \tilde{S}_\e(\partial_{rs} c,\s) \setminus \tilde{S}_\frac{\e}{2}(\partial_{rs} c,\s)\right). \ar[u]^{( \partial_{rs}\times 1_n)_\#}
}
\end{equation}
The horizontal arrows are the chain level excision maps.
If $\tau$ is a singular simplex of $\partial_{rs} c \times \Delta^{n}$, then $s(\tau)$ given by performing two operations.
First subdivide $\tau$ into smaller simplices that lie entirely in $\partial_{rs} c \times \Delta^{n} \setminus \tilde{S}_\frac{\e}{2}(\partial_{rs} c,\s)$ or entirely in $\tilde{S}_\e(\partial_{rs} c,\s)$, and then discard all simplices of the first type.
Similarly, $s(( \partial_{rs}\times 1_n)_\#\tau)$ is given by first subdividing $(\partial_{rs}\times 1_n)_\#\tau$ into simplices that lie entirely in $c  \times \Delta^{n} \setminus \tilde{S}_\frac{\e}{2}(c,\s)$ or entirely in $\tilde{S}_\e(c,\s)$ then discarding simplices of the first type.
See the proof of~\cite[Proposition 2.21]{hatcher} for explicit formulas for $s$.
Observe that
\[ \left(c \times \Delta^n \setminus \tilde{S}_\frac{\e}{2}(c,\s)\right) \bigcap \left(\partial_{rs} c \times \Delta^{n}\right) = \partial_{rs} c \times \Delta^{n} \setminus \tilde{S}_\frac{\e}{2}(\partial_{rs} c,\s)
\]
and 
\[ \tilde{S}_\e(c,\s) \bigcap \partial_{rs} c \times \Delta^{n} = \tilde{S}_\e(\partial_{rs}c,\s). \]
Thus for a singular simplex $\tau$ of $\partial_{rs} c \times \Delta^{n}$, 
\[s( \partial_{rs}\times 1_n)_\#\tau = ( \partial_{rs}\times 1_n)s\tau\]
and so Diagram~\ref{dia three} commutes.

The next diagram we consider is the third square in (\ref{dia foo}):
\begin{equation}\label{dia four}
\xymatrix{
 C_{n+m-1}\left( \tilde{S}_\e(c,\s), \tilde{S}_\e(c,\s) \setminus \tilde{S}_\frac{\e}{2}(c,\s)\right) \ar[rr]^-{\cap ev_c^*(U)}
& &  C_{n+m-1-\chi d}\left(\tilde{S}_\e(c,\s)\right)\\
 C_{n+m-1}\left(\tilde{S}_\e(\partial_{rs} c,\s), \tilde{S}_\e(\partial_{rs} c,\s) \setminus \tilde{S}_\frac{\e}{2}(\partial_{rs} c,\s)\right) \ar[u]^{( \partial_{rs}\times 1_n)_\#}
\ar[rr]^-{\cap ev_{\partial_{rs}c}^*(U)}
& &  C_{n+m-1-\chi d}\left(\tilde{S}_\e(\partial_{rs}c,\s)\right). 
\ar[u]^{( \partial_{rs}\times 1_n)_\#}
}
\end{equation}
Recall from Proposition~\ref{prop whut} that $ev_c$ is abbreviated notation for the restriction of 
\[ ev^n(\s): \sdbar \to M^{2\chi}\]
to $\tilde{S}_\e(c,\s)$, and that $ev_c$ is a map of pairs:
\[ev_c := ev^n(\s)|_{\tilde{S}_\e(c,\s)}: \left(\tilde{S}_\e(c,\s),  \tilde{S}_\e(c,\s)\setminus \tilde{S}_\frac{\e}{2}(c,\s)\right) \to (N_\e,  N_\e\setminus N_\frac{\e}{2}).\] 
Since $ev_{\partial_{rs} c}$ is simply the further restriction of $ev^n(\s)$ to $\tilde{S}_\e(\partial_{rs} c,\s)$, the following diagram commutes:
\begin{equation}\label{dia five}
\xymatrix{
\left(\tilde{S}_\e(c,\s),  \tilde{S}_\e(c,\s)\setminus \tilde{S}_\frac{\e}{2}(c,\s)\right)
\ar[r]^-{ev_c} 
& (N_\e,  N_\e\setminus N_\frac{\e}{2}) \subset M^\chi.\\
\left(\tilde{S}_\e(\partial_{rs} c,\s) \setminus \tilde{S}_\frac{\e}{2}(\partial_{rs} c,\s)\right)
\ar[u]^{\partial_{rs} \times 1_n} \ar[ur]_{ev_{\partial_{rs} c}}
}
\end{equation}
Thus for a chain 
\[x \in  C_{n+m-1}\left(\tilde{S}_\e(\partial_{rs} c,\s), \tilde{S}_\e(\partial_{rs} c,\s) \setminus \tilde{S}_\frac{\e}{2}(\partial_{rs} c,\s)\right),\]
we have the following.
\begin{eqnarray*}
(\partial_{rs}\times 1_n)_\# \left( x \cap ev_{\partial_{rs}c}^*(U)\right)
&=& (\partial_{rs}\times 1_n)_\# \big( x \cap (\partial_{rs}\times 1_n)^* (ev_c)^* U\big)\\
&=& \big( (\partial_{rs}\times 1_n)_\#\, x\big) \cap (ev_c)^* U.
\end{eqnarray*}
Thus Diagram (\ref{dia four}) commutes.

We now consider the commutativity of the boundary map $\partial_{rs}$ with the map $\alpha^{in}$.
Recall that $\partial_{rs}$ denotes the inclusion
\[\partial_{rs}: \partial_{rs} c = c_1 \times \ldots \times \partial_s c^r \times \ldots \times c^p \hookrightarrow c_1 \times \ldots \times c^p = c\]
of the face $\partial_{rs} c$ into the cell $c$.
The map $\partial_{rs} c$ induces an inclusion
\[ \partial_{rs}: \tilde{S}_\e(\partial_{rs} c,\s)  \hookrightarrow \tilde{S}_\e(c,\s).\]
Let $\pi$ denote the projection maps
\[ \pi:c \times \Delta^n \to c\]
and 
\[\pi: \partial_{rs} c \times \Delta^n \to\partial_{rs} c.\]
Then the following diagram commutes:
\begin{equation}\label{dia six}
\xymatrix{
\partial_{rs} c \times \Delta^n \ar[d]^\pi \ar[rr]^{\partial_{rs} \times 1_n} & & c \times \Delta^n \ar[d]^\pi\\
\partial_{rs} c \ar[rr]^{\partial_{rs}} & & c.
}
\end{equation}
Recall from Definition~\ref{def pi} that 
\[S_\e(c,\s) := \pi \tilde{S}_\e(c,\s).\]
Thus the inclusion $\partial_{rs}$ induces an inclusion
\[ \partial_{rs}: S_\e(\partial_{rs} c,\s)  \hookrightarrow S_\e(c,\s).\]

Recall from Definition~\ref{def mapssm} the space 
\[ \MM(S,M) := \bigsqcup_{x_\Gamma \in S} \M(\Gamma, M).\]
The map $\partial_{rs}$ induces an inclusion
\[ \partial_{rs}: \MM(S_\e(\partial_{rs}c,\s),M) \hookrightarrow \MM(S_\e(c,\s),M).\]
A point in $\MM(S_\e(\partial_{rs}c,\s)$ is a map
\[f: \Gamma \to M\]
where $x_\Gamma \in S_\e(\partial_{rs}c,\s)$.
Then the image $\partial_{rs}(f)$ of $f$ under the inclusion $\partial_{rs}$ is simply the same map $f$.

Consider the following diagram:
\begin{equation}\label{dia seven}
\xymatrix{
 \tilde{S}_\e(c,\s) \ar[r]^-{\alpha^{in}}
& \MM(S_\e(c,\s),M)\\
 \tilde{S}_\e(\partial_{rs}c,\s) 
\ar[u]^{ \partial_{rs}\times 1_n} \ar[r]^-{\alpha^{in}} & \MM(S_\e(\partial_{rs}c,\s),M). \ar[u]^{\partial_{rs}}
}
\end{equation}
Let $(x_\Gamma,t)\in \tilde{S}_\e(\partial_{rs}c,\s)$.
Recall from Definition~\ref{def g} that 
$\alpha^{in}(x_\Gamma,t)$ is a map 
\[f_{(x_\Gamma,t)}: \Gamma \to M.\]
The definition of $f_{(x_\Gamma,t)}$ depends on $x_\Gamma$, $\Gamma$, and $\sigma$, but not the ambient cell $\partial_{rs} c$.
Thus $\alpha^{in}(\partial_{rs}\times 1_n)(x_\Gamma,t)$ and $\partial_{rs}\alpha^{in}(x_\Gamma,t)$ are the same map
\[f_{(x_\Gamma,t)}: \Gamma \to M.\]
Therefore Diagram (\ref{dia seven}) commutes.

Applying the singular chain functor to Diagram (\ref{dia seven}) gives the fourth square in Diagram (\ref{dia foo}):
\begin{equation}\label{dia eight}
\xymatrix{
 C_{n+m-1-\chi d}\left(\tilde{S}_\e(c,\s)\right) \ar[r]^-{\alpha^{in}_\#}
& C_{n+m-1-\chi d}\left(\MM(S_\e(c,\s),M)\right)\\
C_{n+m-1-\chi d} \left(\tilde{S}_\e(\partial_{rs}c,\s)\right) 
\ar[u]^{ (\partial_{rs}\times 1_n)_\#} \ar[r]^-{\alpha^{in}_\#} & C_{n+m-1-\chi d}\left(\MM(S_\e(\partial_{rs}c,\s),M)\right). \ar[u]^{(\partial_{rs})_\#}
}
\end{equation}

Finally, consider the following diagram:
\begin{equation}\label{dia nine}
\xymatrix{
\MM(S_\e(c,\s),M) \ar[r]^-{out}& LM^\ell.\\
\MM(S_\e(\partial_{rs}c,\s),M) \ar[u]^{\partial_{rs}} \ar[ur]_-{out}.
}
\end{equation}
A point in $\MM(S_\e(\partial_{rs}c,\s),M) $ is a map 
\[f:\Gamma \to M\]
where $x_\Gamma$ is a point in $S_\e(\partial_{rs}c,s)$.
The map $out_\#(f)$ is the composition
\[ \bigsqcup_\ell S^1 \xlongrightarrow{b}  \Gamma \xlongrightarrow{f} M,\]
where $b$ is the output boundary cycle map of $\Gamma$.
Since $out_\#(f)$ depends only on the string diagram $\Gamma$, $out_\#\partial_{rs}(f)$ is precisely the same map.
Thus Diagram (\ref{dia nine}) commutes.

Taking chains, we have to following commutative diagram, which is the triangle in Diagram (\ref{dia foo}).
\begin{equation}\label{dia ten}
\xymatrix{
C_{n+m-1-\chi d}\left(\MM(S_\e(c,\s),M)\right) \ar[r]^-{out_\#}& C_{n+m-1-\chi d}\left(LM^\ell\right).\\
C_{n+m-1-\chi d}\left(\MM(S_\e(\partial_{rs}c,\s),M)\right) \ar[u]^{(\partial_{rs})_\#} \ar[ur]_-{out_\#}.
}
\end{equation}

Combining Diagrams (\ref{dia two}), (\ref{dia three}), (\ref{dia four}), (\ref{dia eight}), and (\ref{dia ten}) shows that Diagram (\ref{dia foo}) commutes and gives the lemma.
\end{proof}

We proceed with the next lemma.
\begin{lemma}\label{nate lem1}
The following diagram commutes.\\
\begin{center}
\(
\xymatrix{
C_{n+m-1}(c \times \Delta^n) \ar[r]^{\st_{(c,\s)}}& C_{n+m-1-\chi d}(LM^\ell)\\
C_{n+m-1}(c \times \Delta^{n-1}) \ar[u]^{(1_c \times \partial_i)_\#} \ar[ur]_{\st_{(c,\partial_i \s)}}
}
\)
\end{center}
\end{lemma}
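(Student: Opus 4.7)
The plan is to proceed in exact parallel with the proof of Lemma~\ref{nate lem2}, decomposing $\st_{(c,\s)}$ into the five-step composition of Definition~\ref{def stc} and verifying that each step commutes with the appropriate map induced by $1_c \times \partial_i$. That is, I would exhibit a large commutative diagram analogous to (\ref{dia foo}) with $(1_c \times \partial_i)_\#$ replacing $(\partial_{rs} \times 1_n)_\#$ throughout, and then assemble its commutative subdiagrams to conclude.

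The essential observation that drives everything is the following compatibility of evaluation maps: if we view $\partial_i \s \in \M(\Delta^{n-1} \times \ks, M)$ as obtained from $\s$ by precomposing with $1_{\ks} \times \partial_i$, then
\[ ev^n(\partial_i \s) = ev^n(\s) \circ (1_{\sdbar} \times \partial_i) \]
as maps $\sdbar \times \Delta^{n-1} \to M^{2\chi}$. Restricting to $c$, this immediately yields the analog of Diagram~(\ref{dia one}). From it I would deduce, by taking preimages of $N_{\e/2}$ and $N_\e$, that
\[ \tilde{S}_{\e/2}(c,\partial_i \s) = (1_c \times \partial_i)^{-1}\bigl(\tilde{S}_{\e/2}(c,\s)\bigr) \quad \text{and} \quad \tilde{S}_\e(c,\partial_i\s) = (1_c \times \partial_i)^{-1}\bigl(\tilde{S}_\e(c,\s)\bigr), \]
and, projecting via $\pi$, the corresponding statement for $S_\e$. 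These relations give well-defined maps of pairs, which furnish the commutativity of the $j_\#$, $s$, and $\cap ev_c^*(U)$ squares exactly as in Diagrams (\ref{dia two}), (\ref{dia three}), (\ref{dia four}): the $j_\#$ square is tautological, the excision square follows because the subdivision formula of \cite[Prop.~2.21]{hatcher} is natural with respect to maps of pairs with matching intersections, and the cap-product square follows from naturality of cap product together with the identity $(1_c \times \partial_i)^* ev_c^*(U) = ev_{c,\partial_i\s}^*(U)$.

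For the $\alpha^{in}$ square I would argue pointwise. Given $(x_\Gamma, t') \in \tilde{S}_\e(c, \partial_i \s)$, Definition~\ref{def g} builds a map $f_{(x_\Gamma, t')}: \Gamma \to M$ using the slice $(\partial_i \s)_{t'}: \ks \to M$ on input circles and the corresponding geodesic segments on chords. Because $(\partial_i \s)_{t'} = \s_{\partial_i t'}$ by definition of the face map, the input-circle data agree with that used to build $f_{(x_\Gamma, \partial_i t')}$ from $\s$; and since the geodesics $\gamma_i$ of Proposition~\ref{prop geos} are determined by their endpoints, the geodesic segments also agree. Hence $\alpha^{in}(x_\Gamma,t')$ and $\alpha^{in}((1_c \times \partial_i)(x_\Gamma,t'))$ are the same map $\Gamma \to M$, so the $\alpha^{in}$ square commutes after identifying $\MM(S_\e(c,\partial_i\s),M)$ with its image in $\MM(S_\e(c,\s),M)$. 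Finally, the $out_\#$ triangle commutes trivially because $out$ depends only on the string diagram $\Gamma$ and not on $\s$ or its faces.

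The only substantive subtlety, and what I expect to be the main obstacle requiring care, is to be certain that each passage from spaces to chains preserves the identifications above; in particular, one needs the induced map $\MM(S_\e(c,\partial_i\s),M) \hookrightarrow \MM(S_\e(c,\s),M)$ to be well-defined at the level of the topology in Definition~\ref{def mapssm}, not merely on underlying sets. This is straightforward once one notes that the open-set basis of Definition~\ref{def mapssm} is defined by pulling back opens in $S_\e$ through $p$, so an inclusion $S_\e(c,\partial_i\s) \hookrightarrow S_\e(c,\s)$ automatically induces a continuous inclusion of mapping spaces. Everything else is purely a formal transcription of the argument for Lemma~\ref{nate lem2}, and the lemma follows by pasting the five commutative subdiagrams together and evaluating on a generator.
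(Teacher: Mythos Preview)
Your proposal is correct and follows essentially the same approach as the paper's proof: both decompose $\st$ into its five constituent maps, establish the key identity $ev^{n-1}(\partial_i\s) = ev^n(\s)\circ(1_c\times\partial_i)$ (you wrote $ev^n(\partial_i\s)$, a harmless slip), deduce that the relevant $\tilde S_\e$ sets are preimages under $1_c\times\partial_i$, and then verify commutativity of each of the five subdiagrams by the same pointwise and naturality arguments you outline. Your remark on the continuity of the inclusion $\MM(S_\e(c,\partial_i\s),M)\hookrightarrow\MM(S_\e(c,\s),M)$ is a small addition the paper leaves implicit.
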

Recall that $\st_{(c,\s)}$ is the series of chain maps rising in the string topology construction for $(c,\s)$.
In words, the lemma says that applying $\st_{(c,\partial_i\s)}$ gives the same result as first including $c \times \Delta^{n-1}$ as a face of $c \times \Delta^n$ and then applying $\st_{(c,\s)}$.
\begin{proof}
The proof is quite similar to the proof of Lemma \ref{nate lem2}, so we will omit some of the details.
However, the roles of $c$ and $\s$ in the construction are not identical, so some slightly different arguments are needed.

Once again, we will show the diagram commutes by showing that a more complicated diagram commutes:
\begin{equation}\label{dia bar}
\xymatrix{
C_{n+m-1}(c \times \Delta^n) \ar[r]^-{j_\#} & \bullet \ar[r]^-s & \bullet \ar[rr]^{\cap ev_c*(U)} && \bullet  \ar[r]^-{\alpha^{in}_\#} & \bullet \ar[r]^-{out_\#}& C_{n+m-1-\chi d}(LM^\ell). \\
C_{n+m-1}(c \times \Delta^{n-1}) \ar[u]_-{(1_c \times \partial_i)_\#} \ar[r]^-{j_\#} & \bullet\ar[u]_-{(1_c \times \partial_i)_\#} \ar[r]^-s & \bullet\ar[u]_-{(1_c \times \partial_i)_\#} \ar[rr]^{\cap ev_{\partial{rs} c}*(U)} && \bullet \ar[u]_-{(1_c \times \partial_i)_\#} \ar[r]^-{\alpha^{in}_\#} & \ar[u]_-{i_\#}\bullet \ar[ur]_-{out_\#}
}
\end{equation}
The names of some of the entries in the diagram are suppressed to make the diagram easier to read.

The inclusion
\[ \partial_i: \Delta^{n-1} \to \Delta^n\]
induces the vertical maps in the following commutative diagram:
\begin{equation}\label{dia 2two}
\xymatrix{
\M(\ks \times \Delta^n, M) \ar[r]^-{ev^n} \ar[d]^-{\partial_i}& \M(\sdbar \times \Delta^n, M^{2 \chi}) \ar[d]^-{\partial_i}\\
\M(\ks \times \Delta^{n-1}, M) \ar[r]^-{ev^{n-1}} & \M(\sdbar \times \Delta^{n-1}, M^{2 \chi}).
}
\end{equation}
The commutativity of (\ref{dia 2two}) implies that the following diagram commutes:
\begin{equation}\label{dia 2three}
\xymatrix{
c \times \Delta^n \ar[rr]^-{ev^n(\s)|_{c \times \Delta^n}} & & M^{2\chi}.\\
c \times \Delta^{n-1} \ar[urr]_-{ev^{n-1}(\partial_i\s)|_{c \times \Delta^{n-1}}} \ar[u]^-{1_c \times \partial_i}
}
\end{equation}
Recall that
\[ \tilde{S}_\e(c,\s) := (ev^n(\s)|_{c \times \Delta^n})^{-1}(N_\e(c,\s)) \subset M^{2 \chi}.\]
Thus by the commutativity of (\ref{dia 2three}), $1_c \times \partial_i$ induces well-defined maps of pairs:
\[ 1_c \times \partial_i: \left(c \times \Delta^{n-1},  c \times \Delta^{n-1} \setminus \tilde{S}_\frac{\e}{2}(c,\partial_{i} \s)\right) \to \left(c \times \Delta^{n}, c \times \Delta^{n} \setminus \tilde{S}_\frac{\e}{2}(c,\s)\right)\]
and
\[ 1_c \times \partial_i:  \left(\tilde{S}_\e(c,\partial_i\s), \tilde{S}_\e(c,\partial_{i} \s) \setminus \tilde{S}_\frac{\e}{2}(c,\partial_{i} \s)\right) \to \left(\tilde{S}_\e(c,\s), \tilde{S}_\e(c,\s)\setminus \tilde{S}_\frac{\e}{2}(c,\s)\right).\]
These maps form right edge of the first and second squares, respectively, of Diagram (\ref{dia bar}).

The first two squares in Diagram (\ref{dia bar}) are:
\begin{equation}\label{dia 2one}
\xymatrix{
C_{n+m-1}(c \times \Delta^n) \ar[r]^-{j_\#} & C_{n+m-1}\left(c \times \Delta^n, c \times \Delta^n \setminus \tilde{S}_\frac{\e}{2}(c,\s)\right)\\
C_{n+m-1}(c \times \Delta^{n-1}) \ar[u]_-{(1_c \times \partial_i)_\#} \ar[r]^-{j_\#} & C_{n+m-1}\left(c \times \Delta^{n-1},  c \times \Delta^{n-1} \setminus \tilde{S}_\frac{\e}{2}(c,\partial_{i} \s)\right) \ar[u]_-{(1_c \times \partial_i)_\#}
}
\end{equation}
and
\begin{equation}\label{dia 2four}
\xymatrix{
C_{n+m-1}\left(c \times \Delta^n, c \times \Delta^n \setminus \tilde{S}_\frac{\e}{2}(c,\s) \right)\ar[r]^-s & C_{n+m-1} \left(\tilde{S}_\e(c,\s), \tilde{S}_\e(c,\s)\setminus \tilde{S}_\frac{\e}{2}(c,\s)\right) \\
C_{n+m-1}\left(c \times \Delta^{n-1},  c \times \Delta^{n-1} \setminus \tilde{S}_\frac{\e}{2}(c,\partial_{i} \s)\right) \ar[u]_-{(1_c \times \partial_i)_\#} \ar[r]^-s & C_{n+m-1}\left(\tilde{S}_\e(c,\partial_i\s), \tilde{S}_\e(c,\partial_{i} \s) \setminus \tilde{S}_\frac{\e}{2}(c,\partial_{i} \s)\right) \ar[u]_-{(1_c \times \partial_i)_\#} 
}
\end{equation}
An argument completely analogous to the one given in the proof of Lemma~\ref{nate lem2} for Diagrams (\ref{dia two}) and (\ref{dia three}) shows that Diagrams (\ref{dia 2one}) and (\ref{dia 2four}) commute.

We proceed to the third square in (\ref{dia bar}).
The maps in diagram (\ref{dia 2three}) restrict to give the following commutative diagram:
\begin{equation}\label{dia 2five}
\xymatrix{
\left(\tilde{S}_\e(c,\s), \tilde{S}_\e(c,\s)\setminus \tilde{S}_\frac{\e}{2}(c,\s)\right)  \ar[rr]^-{ev^n(\s)|_{\tilde{S}_\e(c,\s) }} & & \big(N_\e, N_\e \setminus N_\frac{\e}{2}\big) \subset M^{2\chi}.\\
\left(\tilde{S}_\e(c,\partial_{i} \s), \tilde{S}_\e(c,\partial_{i} \s) \setminus \tilde{S}_\frac{\e}{2}(c,\partial_{i} \s)\right) \ar[urr]_-{ev^{n-1}(\partial_i\s)|_{\tilde{S}_\e(c,\partial_{i} \s) }} \ar[u]^-{1_c \times \partial_i}
}
\end{equation}
Thus for a chain 
\[ x \in  C_{n+m-1}\left(\tilde{S}_\e(c,\partial_i\s), \tilde{S}_\e(c,\partial_{i} \s) \setminus \tilde{S}_\frac{\e}{2}(c,\partial_{i} \s)\right) \]
we have the following.
\begin{eqnarray*}
(1_c \times \partial_i)_\# \left( x \cap \left( ev^{n-1}(\partial_i\s)|_{\tilde{S}_\e(c,\partial_{i} \s) }\right)^* U \right)
&=& (1_c \times \partial_i)_\# \big( x \cap (\partial_{rs}\times 1_n)^* \big(ev^n(\s)|_{\tilde{S}_\e(c,\s)}\big)^* U\big)\\
&=& (1_c \times \partial_i)_\#\, x\cap \big(ev^n(\s)|_{\tilde{S}_\e(c,\s)}\big)^* U.
\end{eqnarray*}
This computation says precisely that the following diagram, which is the third square of (\ref{dia bar}), commutes.
\begin{equation}\label{dia 2six}
\xymatrix{
 C_{n+m-1}\left( \tilde{S}_\e(c,\s), \tilde{S}_\e(c,\s) \setminus \tilde{S}_\frac{\e}{2}(c,\s)\right) \ar[rrr]^-{\cap \big(ev^n(\s)|_{\tilde{S}_\e(c,\s)}\big)^* U}
& & &  C_{n+m-1-\chi d}\left(\tilde{S}_\e(c,\s)\right)\\
 C_{n+m-1}\left(\tilde{S}_\e(c,\partial_i s), \tilde{S}_\e(c,\partial_i s) \setminus \tilde{S}_\frac{\e}{2}(c,\partial_i s)\right) \ar[u]^{( 1_c \times \partial_i )_\#}
\ar[rrr]^-{\cap \left((ev^{n-1}(\partial_i\s)|_{\tilde{S}_\e(c,\partial_{i} \s) }\right)^* U}
& & &  C_{n+m-1-\chi d}\left(\tilde{S}_\e(c,\partial_i s)\right). 
\ar[u]^{( 1_c \times \partial_i )_\#}
}
\end{equation}

We now consider the fourth square in Diagram(\ref{dia bar}).
Recall from Definition~\ref{def barS} and Proposition~\ref{prop whut} that
\[ \tilde{S}_\e(c,\sigma) := \{ (x_\Gamma, t) \in c \times \Delta^n \,\, | \,\, ev^n(\sigma)(x_\Gamma,t) \in N_\e\}\subset c \times \Delta^n\]
and 
\[ S_\e(c,\sigma) := \pi \tilde{S}_\e(c,\sigma) \subset c.\]
Thus 
\[ S_\e(c,\sigma) = \{ x_\Gamma \in c \,\, | \,\, ev^n(\sigma)(x_\Gamma,t) \in N_\e \mbox{ for some } t \in \Delta^n\}\]
and similarly
\[ S_\e(c,\partial_i \sigma) = \{ x_\Gamma \in c \,\, | \,\, ev^{n-1}(\partial_i \sigma)(x_\Gamma,t) \in N_\e \mbox{ for some } t \in \Delta^{n-1}\}.\]
By the commutativity of Diagram~\ref{dia 2three}, $S_\e(c,\partial_i \sigma)$ is a subset of $ S_\e(c,\sigma)$.
Let 
\[i:S_\e(c,\partial_i \sigma) \hookrightarrow  S_\e(c,\sigma)\]
denote the inclusion.
This inclusion induces an inclusion
\[i: \MM(S_\e(c,\partial_i \sigma),M) \hookrightarrow \MM(S_\e(c,\sigma),M).\]

We claim that the following diagram commutes:
\begin{equation}\label{dia 2seven}
\xymatrix{
 \tilde{S}_\e(c,\s) \ar[r]^-{\alpha^{in}}
& \MM(S_\e(c,\s),M)\\
 \tilde{S}_\e(c,\partial_i s) 
\ar[u]^{ 1_c \times \partial_i} \ar[r]^-{\alpha^{in}} & \MM(S_\e(c,\partial_i s),M). \ar[u]^{i}
}
\end{equation}
Let $(x_\Gamma,t) \in \tilde{S}_\e(c,\partial_i s)$.
Then $\alpha^{in}(1_c \times \partial_i)(x_\Gamma,t)$ is a map 
\[f_{(1_c \times \partial_i)(x_\Gamma,t)}:\Gamma \to M\]
and $\alpha^{in}(x_\Gamma,t)$ is a map
\[f_{(x_\Gamma,t)}:\Gamma \to M.\]
For a point $\theta$ on an input circle of $\Gamma$,
\[f_{(1_c \times \partial_i)(x_\Gamma,t)}(\theta) = \sigma(t,\theta).\]
and 
\[f_{(x_\Gamma,t)}(\theta) = \partial_i \sigma(t,\theta).\]
Since $t$ is a point in $\partial_i \Delta^{n}$,
\[ \sigma(t,\theta) = \partial_i  \sigma(t,\theta).\]
Thus, the maps $f_{(1_c \times \partial_i)(x_\Gamma,t)}$ and $f_{(x_\Gamma,t)}(\theta)$ agree on the input circles of $\Gamma$.
The behavior of the maps $f_{(1_c \times \partial_i)(x_\Gamma,t)}$ and $f_{(x_\Gamma,t)}$ and on the chords of $\Gamma$ is given by the geodesic construction of Proposition~\ref{prop geos}.
Since $f_{(1_c \times \partial_i)(x_\Gamma,t)}$ and $f_{(x_\Gamma,t)}$ agree on input circles, they are agree on chord endpoints.
The construction of Proposition~\ref{prop geos} depends only on the images of the chords endpoints in $M$, so the maps $f_{(1_c \times \partial_i)(x_\Gamma,t)}$ and $f_{(x_\Gamma,t)}$ are determined by their restriction to input circles.
Thus
\[ f_{(1_c \times \partial_i)(x_\Gamma,t)} \equiv f_{(x_\Gamma,t)}.\]
In particular,
\[ i( f_{(x_\Gamma,t)}) = f_{(1_c \times \partial_i)(x_\Gamma,t)}\]
so Diagram~\ref{dia 2seven} commutes.

Applying the singular chain functor to Diagram (\ref{dia 2seven}) gives fourth square in Diagram(\ref{dia bar}):
\begin{equation}\label{dia 2eight}
\xymatrix{
 C_{n+m-1-\chi d}\left(\tilde{S}_\e(c,\s)\right) \ar[r]^-{\alpha^{in}_\#}
& C_{n+m-1-\chi d}\left(\MM(S_\e(c,\s),M)\right)\\
C_{n+m-1-\chi d} \left(\tilde{S}_\e(c,\partial_i s)\right) 
\ar[u]^{ (1_c \times \partial_i)_\#} \ar[r]^-{\alpha^{in}_\#} & C_{n+m-1-\chi d}\left(\MM(S_\e(c,\partial_i s),M)\right). \ar[u]^{i_\#}
}
\end{equation}

The triangle in Diagram (\ref{dia bar}) is
\begin{equation}\label{dia 2ten}
\xymatrix{
C_{n+m-1-\chi d}\left(\MM(S_\e(c,\s),M)\right) \ar[r]^-{out_\#}& C_{n+m-1-\chi d}\left(LM^\ell\right).\\
C_{n+m-1-\chi d}\left(\MM(S_\e(c,\partial_i s),M)\right) \ar[u]^{{i}_\#} \ar[ur]_-{out_\#}
}
\end{equation}
An argument completely analogous to the one given for Diagram (\ref{dia ten})  in the proof of Lemma~\ref{nate lem2} shows that (\ref{dia 2ten}) commutes.

Combining Diagrams (\ref{dia 2one}), (\ref{dia 2four}), (\ref{dia 2six}), (\ref{dia 2eight}), and (\ref{dia 2ten}) shows that Diagram (\ref{dia bar}) commutes, and the lemma follows.
\end{proof}
\begin{proof}[Proof of Theorem \ref{thm chain}.]
Fix a generator $(c,\s)$ of $\mathcal{C}_*(\sdbar) \otimes C_*(LM^k) $.
We compute:
\begin{eqnarray*}
\partial \mathcal{ST}(c,\s) &:=& \partial \st_{(c,\s)}(c,\s)\\
&:=& \partial out_\# \alpha^{in}_\# \left( sj_\# (\mu_{c \times \Delta^n})\cap ev_c^*U\right).
\end{eqnarray*}
Since all the maps in the above composition are chain maps, we have:
\begin{eqnarray*}
\partial \mathcal{ST}(c,\s) &=& out_\# \alpha^{in}_\# \partial\left( sj_\# (\mu_{c \times \Delta^n})\cap ev_c^*U\right)\\
&=& out_\# \alpha^{in}_\# (-1)^{\chi d} \left( \left( sj_\# \partial (\mu_{c \times \Delta^n})\cap ev_c^*U\right) -  sj_\# (\mu_{c \times \Delta^n})\cap ev_c^*\delta  U\right).
\end{eqnarray*}
Since $U$ is a cocycle, $\delta U = 0$, and we have:
\begin{eqnarray*}
\partial \mathcal{ST}(c,\s) &=&out_\# \alpha^{in}_\# (-1)^{\chi d} \left( \left( sj_\# \partial (\mu_{c \times \Delta^n})\cap ev_c^*U\right) -  sj_\# (\mu_{c \times \Delta^n})\cap ev_c^*\delta  U\right)\\
&=& out_\# \alpha^{in}_\# \left( (-1)^{\chi d} \left( sj_\# \partial (\mu_{c \times \Delta^n})\cap ev_c^*U\right) \right)\\
&=& (-1)^{\chi d}\st_{(c,\s)} (\partial \mu_{c \times \Delta^n}).
\end{eqnarray*}
By Lemma \ref{nate lem3},
\begin{eqnarray*}
\partial \mathcal{ST}(c,\s) &=& (-1)^{\chi d}\st_{(c,\s)} (\partial \mu_{c \times \Delta^n})\\
&=& (-1)^{\chi d}\st_{(c,\s)} \Big(\sum_{r=1}^p \sum_{s=1}^{j_r} (-1)^{\epsilon(r,s)}(\partial_{rs}\times 1_n)_\# \left(\mu_{\partial_{rs} c \times \Delta^n}\right)
\\
&\;& \hspace{4cm}
+ (-1)^m \sum_{i=1}^n (-1)^i (1_c \times \partial_i)_\# \left(\mu_{c\times\Delta^{n-1}}\right)\Big)\\
&=&  (-1)^{\chi d}\Big(\sum_{r=1}^p \sum_{s=1}^{j_r} (-1)^{\epsilon(r,s)}\st_{(c,\s)}(\partial_{rs}\times 1_n)_\# \left(\mu_{\partial_{rs} c \times \Delta^n}\right)  
\\
&\;& \hspace{4cm}
+(-1)^m \sum_{i=1}^n (-1)^i\st_{(c,\s)}(1_c \times \partial_i)_\# \left(\mu_{c\times\Delta^{n-1}}\right)\Big).
\end{eqnarray*}
Using Lemmas \ref{nate lem2} and \ref{nate lem1}, we continue:
\begin{eqnarray*}
\partial \mathcal{ST}(c,\s) 
&=& (-1)^{\chi d} \left(\sum_{r=1}^p \sum_{s=1}^{j_r} (-1)^{\epsilon(r,s)} \st_{(\partial_{rs}c, \s)}\mu_{\partial_{rs}c\times\Delta^n} + (-1)^m \sum_{i=1}^n (-1)^i \st_{(c,\partial_i s)} \left(\mu_{c\times\Delta^{n-1}}\right)\right)\\
&=& (-1)^{\chi d}\left(\sum_{r=1}^p \sum_{s=1}^{j_r} (-1)^{\epsilon(r,s)} \mathcal{ST}(\partial_rs c, \s) + (-1)^m \sum_{i=1}^n (-1)^i \mathcal{ST}(c,\partial_i \s) \right)\\
&=& (-1)^{\chi d}\left( \mathcal{ST} (dc,\s) + (-1)^m \mathcal{ST}(c,\partial \s) \right)\\
&=& (-1)^{\chi d} \mathcal{ST} d_\otimes (c,\s).
\end{eqnarray*}
\end{proof}

The proof of Theorem~\ref{thm chain} actually shows something slightly more general.
Let $W$ be any cochain in $M^{2\chi}$ supported near $D$.
That is say, let
\[ W \in C^{w}\left( N_\e, N_\e - N_\frac{\e}{2}\right).\]
Then we can substitute $W$ for $U$ in Definition~\ref{def stc} to get a degree $-w$ map
\[ \mathcal{ST}_W: \mathcal{C}_*(\sdbar) \otimes C_*(LM^k) \to C_{*-w}(LM^\ell).\]
\begin{prop}\label{prop w}
The boundary of $\mathcal{ST}_W$ in
\[\mbox{Hom}_{-w}\left(\mathcal{C}_*(\sdbar) \otimes C_*(LM^k),\, C_{*}(LM^\ell)\right) \]
is as follows:
\[ d_{\mbox{Hom}}\mathcal{ST}_W =(-1)^{w+1} \mathcal{ST}_{\delta W}.\]
\end{prop}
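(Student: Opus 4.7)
The plan is to replay the proof of Theorem~\ref{thm chain} verbatim, keeping track of the one spot where the cocycle condition $\delta U = 0$ was used and isolating the extra term that now appears when $W$ is not a cocycle.

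First I would fix a generator $(c,\sigma)$ of $\mathcal{C}_m(\sdbar)\otimes C_n(LM^k)$ and compute $\partial\mathcal{ST}_W(c,\sigma)$ by the same sequence of steps as in Theorem~\ref{thm chain}. Since $j_\#$, $s$, $\alpha^{in}_\#$, and $out_\#$ are chain maps regardless of the choice of cochain, they can all be pushed through the differential. The only place where $U$ was used in an essential way was the Leibniz rule for the cap product
\[ \partial(x \cap ev_c^*W) = (-1)^{w}\bigl((\partial x)\cap ev_c^*W - x\cap ev_c^*(\delta W)\bigr), \]
where I am matching the sign convention used by the authors in the corresponding formula with $w=\chi d$ and $\delta U = 0$ in the proof of Theorem~\ref{thm chain}. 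Applied to $x = sj_\#(\mu_{c\times\Delta^n})$, this yields
\[ \partial\mathcal{ST}_W(c,\sigma) = (-1)^{w}\,out_\#\alpha^{in}_\#\bigl(sj_\#(\partial\mu_{c\times\Delta^n})\cap ev_c^*W\bigr) \;-\; (-1)^{w}\,out_\#\alpha^{in}_\#\bigl(sj_\#(\mu_{c\times\Delta^n})\cap ev_c^*\delta W\bigr). \]

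Next I would recognize the second summand: since $\delta W \in C^{w+1}(N_\e, N_\e\setminus N_{\e/2})$ is a cochain of the same general type as $W$, the second term is exactly $-(-1)^{w}\mathcal{ST}_{\delta W}(c,\sigma)$ by Definition~\ref{def stc}. For the first summand, Lemma~\ref{nate lem3} rewrites $\partial\mu_{c\times\Delta^n}$ as a signed sum of $(\partial_{rs}\times 1_n)_\#\mu_{\partial_{rs}c\times\Delta^n}$ and $(1_c\times\partial_i)_\#\mu_{c\times\Delta^{n-1}}$ terms. Crucially, Lemmas~\ref{nate lem2} and~\ref{nate lem1} apply without change: their proofs only use that each factored map in $\st_{(c,\sigma)}$ commutes with the appropriate face inclusion, and none of the diagrams there (the evaluation square, excision, cap product naturality via $ev_c^*$, the geodesic construction $\alpha^{in}$, and $out_\#$) depend on the specific cochain. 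Hence the same argument gives
\[ (-1)^{w}\,out_\#\alpha^{in}_\#\bigl(sj_\#(\partial\mu_{c\times\Delta^n})\cap ev_c^*W\bigr) = (-1)^{w}\mathcal{ST}_W d_\otimes(c,\sigma). \]

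Combining the two pieces,
\[ \partial\mathcal{ST}_W(c,\sigma) = (-1)^{w}\mathcal{ST}_W d_\otimes(c,\sigma) + (-1)^{w+1}\mathcal{ST}_{\delta W}(c,\sigma). \]
The Hom differential reads $d_{\mathrm{Hom}}\mathcal{ST}_W = \partial\mathcal{ST}_W - (-1)^{-w}\mathcal{ST}_W d_\otimes$, and since $(-1)^{-w}=(-1)^{w}$ the first term on the right cancels, leaving $d_{\mathrm{Hom}}\mathcal{ST}_W = (-1)^{w+1}\mathcal{ST}_{\delta W}$ as claimed.

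I expect the main obstacle to be nothing conceptual but rather keeping the signs straight: one must be sure that the Leibniz rule convention used implicitly in the proof of Theorem~\ref{thm chain} (where the $\delta U$ term appears with a minus sign inside $(-1)^{\chi d}(\cdots)$) is applied consistently with $w$ in place of $\chi d$, and that the identity $(-1)^{-w}=(-1)^{w}$ is used to identify the degree of $\mathcal{ST}_W$ as $-w$ in the Hom complex. Everything else is a direct transcription of the argument already in place.
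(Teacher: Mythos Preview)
Your proposal is correct and follows exactly the same approach as the paper: replay the computation from Theorem~\ref{thm chain} with $W$ in place of $U$, isolate the extra $\delta W$ term from the cap-product Leibniz rule, and then cancel the $d_\otimes$ terms in $d_{\mathrm{Hom}}$. In fact, the paper's own proof is terser than yours---it simply cites the computation in Theorem~\ref{thm chain} and writes down the resulting formula---so your explicit invocation of Lemmas~\ref{nate lem3}, \ref{nate lem2}, and~\ref{nate lem1} and your remark that they are insensitive to the choice of cochain make the argument more self-contained.
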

\begin{proof}
Let $(c,\s)$ be a generator of $\mathcal{C}_m(\sdbar) \otimes C_n(LM^k)$.
The above computation in the proof of Theorem~\ref{thm chain} shows that:
\[ \partial \mathcal{ST}_W(c,\s) = (-1)^w \mathcal{ST} d_\otimes (c,\s) - (-1)^{w} \mathcal{ST}_{\delta W}(c,\s).\]
Thus we have:
\begin{eqnarray*}
d_{\mbox{Hom}}\mathcal{ST}_W(c,\s) &:=& \partial \mathcal{ST}_W(c,\s) - (-1)^w \mathcal{ST}_Wd_\otimes (c,\s)\\
&=& (-1)^w \mathcal{ST} d_\otimes (c,\s) - (-1)^{w} \mathcal{ST}_{\delta W}(c,\s) - (-1)^w \mathcal{ST}_W d_\otimes (c,\s)\\
&=& (-1)^{w+1} \mathcal{ST}_{\delta W}(c,\s).
\end{eqnarray*}
\end{proof}
Now we can establish how our map $\mathcal{ST}$ changes if we replace $U$ by a different representative of the Thom class.
\begin{cor}\label{cor new thom}
Let be $U$ and $U'$ be two representatives of the Thom cohomology class in
\[H^{\chi d}\left(N_\e, N_\e \setminus N_\frac{e}{2}\right).\]
Then $\mathcal{ST}_U$ and $\mathcal{ST}_U'$ differ by a boundary in
\[\mbox{Hom}_{-w}\left(\mathcal{C}_*(\sdbar) \otimes C_*(LM^k),\, C_{*}(LM^\ell)\right). \]
\end{cor}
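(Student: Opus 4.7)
The plan is to reduce everything to Proposition~\ref{prop w} applied to a cochain $V$ realizing the cohomological equality $[U]=[U']$. More precisely, since $U$ and $U'$ represent the same class in $H^{\chi d}(N_\e, N_\e \setminus N_{\e/2})$, there is a cochain $V \in C^{\chi d-1}(N_\e, N_\e \setminus N_{\e/2})$ with $\delta V = U - U'$.

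Next I would observe that the construction of $\mathcal{ST}_W$ in Definition~\ref{def stc} is $R$-linear in the cochain $W$: only the cap product step uses $W$, and cap product is bilinear. Hence $\mathcal{ST}_W$ is a linear function of $W$, and in particular
\[ \mathcal{ST}_{U-U'} = \mathcal{ST}_U - \mathcal{ST}_{U'}. \]
This observation presumably requires no separate proof; it is simply a remark that the other chain maps $j_\#, s, \alpha^{in}_\#, out_\#$ used in Definition~\ref{def stc} are independent of $W$.

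Now I would apply Proposition~\ref{prop w} with the cochain $V$ in place of $W$ and degree $w = \chi d - 1$:
\[ d_{\mbox{Hom}} \mathcal{ST}_V = (-1)^{w+1} \mathcal{ST}_{\delta V} = (-1)^{\chi d}\, \mathcal{ST}_{U-U'} = (-1)^{\chi d}\bigl(\mathcal{ST}_U - \mathcal{ST}_{U'}\bigr). \]
Multiplying both sides by $(-1)^{\chi d}$ gives
\[ \mathcal{ST}_U - \mathcal{ST}_{U'} = d_{\mbox{Hom}}\bigl((-1)^{\chi d}\mathcal{ST}_V\bigr), \]
exhibiting the difference as the boundary of the element $(-1)^{\chi d}\mathcal{ST}_V$ in the Hom complex, which has total degree $-w = -(\chi d - 1)$, one more than the degree $-\chi d$ of $\mathcal{ST}_U$ and $\mathcal{ST}_{U'}$, as required.

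There is no real obstacle here; the entire argument is bookkeeping once Proposition~\ref{prop w} is in hand. The only point to verify carefully is the linearity of $\mathcal{ST}_W$ in $W$ and the existence of a primitive $V$ for $U-U'$, both of which are immediate from the formulation.
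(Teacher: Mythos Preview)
Your proof is correct and essentially identical to the paper's: both choose a primitive for $U-U'$, invoke Proposition~\ref{prop w}, and use linearity of $W\mapsto\mathcal{ST}_W$ to conclude. The only cosmetic difference is that the paper absorbs the sign $(-1)^{\chi d}$ into its choice of primitive (taking $\delta W=(-1)^{w+1}(U-U')$), whereas you take $\delta V=U-U'$ and move the sign to the very end; these are trivially equivalent.
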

\begin{proof}
Since $U$ and $U'$ represent the same cohomology class,
there is a cochain 
\[ W \in C^{\chi d}\left(N_\e, N_\e \setminus N_\frac{e}{2}\right)\]
such that
\[ \delta W = (-1)^{w+1}(U - U'). \]
We compute
\begin{eqnarray*}
d_{\mbox{Hom}}(\mathcal{ST}_W ) &=& (-1)^{w+1} \mathcal{ST}_{\delta W}\\
&=& (-1)^{w+1} \mathcal{ST}_{(-1)^{w+1}(U - U')} \\
&=& \mathcal{ST}_U - \mathcal{ST}_{U'}.
\end{eqnarray*}
\end{proof}
\begin{remark}\label{rem ez}
We have defined a chain map
\[ \mathcal{ST}: \mathcal{C}_*(\sdbar) \otimes C_*(LM^k) \longrightarrow C_*(LM^\ell). \]
Using the Eilenberg-Zilber functor, we can define a new map
\[ \widetilde{\mathcal{ST}}: \mathcal{C}_*(\sdbar) \otimes C_*(LM)^{\otimes k} \to C_*(LM)^{\otimes \ell}.\]
To be explicit, 
 $\widetilde{\mathcal{ST}} $ is the composition
 \[ \mathcal{C}_*(\sdbar) \otimes C_*(LM)^{\otimes k} \xrightarrow{ 1 \otimes EZ^{-1} } \mathcal{C}_*(\sdbar) \otimes C_*(LM^k) \xrightarrow{\mathcal{ST}} C_*(LM^\ell) \xrightarrow{EZ} C_*(LM)^{\otimes \ell}.\]
\end{remark}


\begin{example}\label{loopproduct}
The chain-level loop product is the map
$$\widetilde{\mathcal{ST}}(c_\Gamma, -): C_*(LM) \otimes C_*(LM) \to C_*(LM)$$
where $c_\Gamma$ is the $0$-cell of $\sdbar(0,2,1)$ corresponding to the string diagram $\Gamma$ of type $(0,2,1)$ with the following properties:
\begin{enumerate}
\item
For the chord $e_\Gamma$, $\varphi_{e_\Gamma}(0)$ coincides with the marked point on the first input and $\varphi_{e_\Gamma}(1)$ coincides with the marked point on the second input.
\item
The point marking the output coincides with the vertex $v_1$ on input 1 between the directed edge $\vec{e}$ with target $v_1$ and the directed edge corresponding to the first input circle.
\end{enumerate}
See Figure 5.

In Corollary \ref{corcor} we will see that the chain-level loop product induces a commutative algebra structure on $H_*(LM)$ which agrees with the structure induced by the Chas-Sullivan loop product.
\end{example}

 \begin{figure}[h]
\centering
\includegraphics{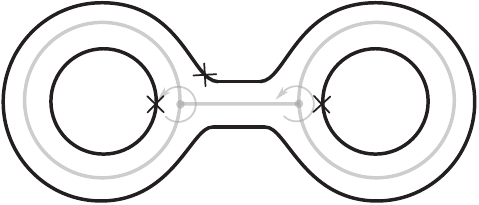}
\label{fatgraph3}
\caption{The string diagram giving the chain-level loop product.}
\end{figure}

\section{Induced operations on homology}

Sullivan chord diagrams were introduced by Cohen-Godin in \cite{CG} to define string topology operations on the homology of the loop space.
In this section we show that the string topology construction defined in Section \ref{STC} recovers those defined in \cite{CG}.

\begin{definition}{\cite{CG}}
A Sullivan chord diagram of type $\gkl$ is a fat graph of type $(g, k + \ell)$ that consists of a disjoint union of $k$ disjoint circles together with the disjoint union of connected trees whose endpoints lie on the circles. The cyclic orderings of the edges at the vertices must be such that each of the $k$ disjoint circles is a boundary cycle. These $k$ circles are referred to as the incoming boundary cycles and the other $\ell$ boundary cycles are referred to as outgoing boundary cycles. Edges of the trees are referred to as ghost edges.
\end{definition}

Again, we fix $(g, k, \ell)$ for the remainder of this section.

\begin{definition}
Let $Sull$ be the space of marked metric Sullivan chord diagrams of type $\gkl$.
\end{definition}

\begin{remark}
Let $\Gamma \in SD$, then $\Gamma$ is a marked metric Sullivan chord diagram. In particular, $\sdbar \cap Sull = SD$.
\end{remark}

In \cite{CG}, Cohen and Godin define operations 
$$\mu_\Gamma:h_*(LM)^{\otimes k} \to h_*(LM)^{\otimes \ell}$$ for $\Gamma$ a marked metric Sullivan chord diagram and $h_*$ any homology theory supporting an orientation of $M$. Let 
$$Maps(S(\Gamma), M) = \{f: \Gamma \to M \; | \; f \textrm{ is constant on each ghost edge}\}$$ and let $\rho_{in}$ and $\rho_{out}$ be restrictions of such maps to inputs and outputs respectively:
$$LM^k \stackrel{\; \rho_{in}}{\longleftarrow} Maps(S(\Gamma), M) \stackrel{\rho_{out}}{\longrightarrow} LM^\ell.$$
Cohen and Godin show that  $\rho_{in}$ is a finite codimension embedding and apply a Thom collapse to obtain an umkehr map on homology:
$$h_*(LM^k) \stackrel{\; (\rho_{in})_!}{\longrightarrow} h_{* - \chi d}(Maps(S(\Gamma), M)) \stackrel{(\rho_{out})_*}{\longrightarrow} h_{* - \chi d}(LM^\ell).$$
To be more explicit, let  $Maps(S(\Gamma), M)^{\nu(\Gamma)}$ denote the Thom space of the normal bundle
of $\rho_{ {in}_!} \left(Maps(S(\Gamma),M)\right)$ inside $LM^k$.
Let 
\[\tau: LM^k \to Maps(S(\Gamma), M)^{\nu(\Gamma)}\]
denote the Thom collapse map, and let 
\[t:  h_*(Maps(S(\Gamma), M)^{\nu(\Gamma)}) \to h_{*-\chi d}(Maps(S(\Gamma),M))\]
denote the Thom isomorphism.
Then  $(\rho_{in})_! = t \circ \tau_*$.

\begin{definition}{\cite{CG}} Let $\Gamma$ be a marked metric Sullivan chord diagram of type $\gkl$. 
Then $$\mu_\Gamma = (\rho_{out})_* \circ (\rho_{in})_!: h_*(LM^k)  \to h_{* - \chi d}(LM^\ell).$$
\end{definition}

We use the notation $\mu_\Gamma$  for this operation as in \cite{CG}; this should not be confused with the notation $\mu_{c \times \Delta^n}$, introduced in Section \ref{STC}, for the fundamental chain of the space $c \times \Delta^n$.

Now we consider the string topology construction of Section \ref{STC} for a fixed string diagram $\Gamma$, and compare it to the string topology operations $\mu_\Gamma$ when $h_* = H_*$, that is, singular homology with field coefficients.

\begin{definition}
Let $\Gamma$ be a string diagram of type $\gkl$. If $x_\Gamma$ is not a 0-cell in the cell decomposition of $\sdbar$, then it is in the interior of a higher dimensional cell.
Subdivide this cell by taking its barycentric subdivision using $x_\Gamma$ as the barycenter.
This sudivision gives a new cell decomposition of $\gkl$ for which $x_\Gamma$ is a 0-cell.
Let $c_\Gamma$ denote this 0-cell, and define a map
$$\lambda_\Gamma: C_*(LM^k)  \to C_{* - \chi d}(LM^\ell)$$
by 
\[ \lambda_\Gamma(\s) = \mathcal{ST}(c_\Gamma, \s).\]
\end{definition}

\begin{prop}
The map $\lambda_\Gamma$ satisfies
\[ \partial \lambda_\Gamma = (-1)^{\chi d} \lambda_\Gamma \partial.\]
\end{prop}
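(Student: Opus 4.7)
The plan is to observe that this is an essentially immediate consequence of Theorem~\ref{thm chain}. Recall that theorem states
\[ \partial \mathcal{ST}(c,\s) = (-1)^{\chi d}\bigl( \mathcal{ST}(dc,\s) + (-1)^m \mathcal{ST}(c,\partial \s)\bigr),\]
where $m = \dim c$. By construction, $c_\Gamma$ is a $0$-cell in the (further subdivided) regular cell decomposition of $\sdbar$, so $m = 0$. Since there are no cells of dimension $-1$, the cellular differential satisfies $d c_\Gamma = 0$, so the first term on the right-hand side vanishes.

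Substituting into the formula gives
\[ \partial \mathcal{ST}(c_\Gamma, \s) = (-1)^{\chi d}\,\mathcal{ST}(c_\Gamma, \partial \s),\]
which by the definition $\lambda_\Gamma(\s) = \mathcal{ST}(c_\Gamma,\s)$ is precisely the desired identity $\partial \lambda_\Gamma = (-1)^{\chi d}\lambda_\Gamma \partial$.

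The only thing worth checking carefully is that subdividing the original CW structure of $\sdbar$ by adding $x_\Gamma$ as a barycenter still produces a regular cell complex, so that Theorem~\ref{thm chain} applies with $c_\Gamma$ viewed as a $0$-cell in this refined decomposition. This is the same kind of argument as in the lemma preceding the CW discussion (barycentric subdivision of products of simplices and intervals remains regular), so no new content is needed. There is no real obstacle; the work has already been done in Theorem~\ref{thm chain}, and this proposition is simply the $m=0$ specialization.
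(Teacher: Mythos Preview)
Your proof is correct and follows essentially the same approach as the paper: the paper's proof is a single sentence observing that since $dc_\Gamma = 0$, the statement follows immediately from Theorem~\ref{thm chain}. Your version spells out the specialization $m=0$ explicitly and adds a remark on the subdivision, but the content is the same.
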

\begin{proof}
Since $d c_\Gamma = 0$, the statement follows immediately from Theorem \ref{thm chain}.
\end{proof}

\begin{theorem}\label{CG}
Let $x_\Gamma \in SD$, let the coefficient ring $R$ be a field and let $(\lambda_\Gamma)_*$ be the map induced on homology by $\lambda_\Gamma$. Then
$$H_*(LM)^{\otimes k} \cong H_*(LM^k)  \stackrel{(\lambda_\Gamma)_*}{\longrightarrow} H_{* - \chi d}(LM^\ell) \cong H_{* - \chi d}(LM)^{\otimes l}$$
is equal to $\mu_\Gamma$.

\end{theorem}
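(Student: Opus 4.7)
The approach is to factor $(\lambda_\Gamma)_*$ through $H_*(\mathrm{Maps}(S(\Gamma), M))$ in the same way that Cohen-Godin factor $\mu_\Gamma = (\rho_{out})_* \circ (\rho_{in})_!$, and then identify the two factors. Since $c_\Gamma$ is a $0$-cell, Definition~\ref{def stc} simplifies dramatically: $c_\Gamma \times \Delta^n$ collapses to $\Delta^n$, the fundamental chain $\mu_{c_\Gamma \times \Delta^n}$ becomes the identity singular simplex $1_n$, and if we let $\phi_\Gamma \colon LM^k \to M^{2\chi}$ denote evaluation at the $2\chi$ chord endpoints of $\Gamma$ (in the order determined by the combinatorial data of $\Gamma$), then $ev_c$ factors as $\phi_\Gamma \circ \sigma$ and $\tilde{S}_\e = \sigma^{-1}(\phi_\Gamma^{-1}(N_\e))$. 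The hypothesis $x_\Gamma \in SD$ ensures that $\Gamma$ is a Sullivan chord diagram, and the key observation is then that $\mathrm{Maps}(S(\Gamma), M) = \phi_\Gamma^{-1}(D)$, since being constant on each ghost edge is exactly the condition that the two endpoints of each chord have the same image in $M$.

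The first step is to show that the cap-product portion of $\lambda_\Gamma$ realizes the Cohen-Godin umkehr map $(\rho_{in})_!$. Because the diagonal $D \subset M^{2\chi}$ is transverse to $\phi_\Gamma$, the normal bundle of the embedding $\mathrm{Maps}(S(\Gamma), M) \hookrightarrow LM^k$ is pulled back from the normal bundle of $D$, and $\phi_\Gamma^*[U]$ represents its Thom class. With field coefficients, the composition of the Thom collapse $\tau$ and the Thom isomorphism $t$ that defines $(\rho_{in})_!$ can be computed on chains as $\cap \,\phi_\Gamma^*[U]$ following excision, and then by the isomorphism $H_*(\phi_\Gamma^{-1}(N_\e)) \cong H_*(\mathrm{Maps}(S(\Gamma), M))$ induced by the deformation retraction $\phi_\Gamma^{-1}(N_\e) \to \mathrm{Maps}(S(\Gamma), M)$. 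Pulling this formula back along $\sigma$ and applying it to $1_n$ exactly computes $(\rho_{in})_!([\sigma])$.

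The second step is to identify $out_\# \circ \alpha^{in}_\#$ with $(\rho_{out})_*$ under the same deformation-retract equivalence. The geodesic interpolation $N_\e \to D$ lifts via $\phi_\Gamma$ to a deformation retraction of $\phi_\Gamma^{-1}(N_\e)$ onto $\mathrm{Maps}(S(\Gamma), M)$. Under this retraction, the map $\alpha^{in}$, which extends a $k$-tuple of loops to a map $\Gamma \to M$ by the short geodesics of Proposition~\ref{prop geos} on chords, becomes the extension by constant paths on ghost edges used implicitly in Cohen-Godin's description of $\mathrm{Maps}(S(\Gamma), M)$. Since both $out$ and $\rho_{out}$ are restriction to output boundary cycles, this matches $out_* \circ \alpha^{in}_*$ with $(\rho_{out})_*$ on homology, and composing the two steps yields $(\lambda_\Gamma)_* = (\rho_{out})_* \circ (\rho_{in})_! = \mu_\Gamma$.

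The main obstacle will be making step one rigorous: Cohen-Godin's umkehr is defined via Thom collapse on infinite-dimensional loop space manifolds, while the present construction uses singular cap product with a pulled-back Thom cocycle on the preimage neighborhood $\phi_\Gamma^{-1}(N_\e)$. One must exhibit a chain-level model of the Thom collapse compatible with excision and cap product, and verify that the naturality square relating the deformation retract $\phi_\Gamma^{-1}(N_\e) \simeq \mathrm{Maps}(S(\Gamma), M)$ to the cap with $\phi_\Gamma^*[U]$ commutes on homology; this is where the field coefficient hypothesis is essential, guaranteeing that the Thom isomorphism behaves well and that no torsion obstructs the identification.
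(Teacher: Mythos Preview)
Your proposal is correct and follows essentially the same route as the paper's own proof. The paper's $f_\Gamma = i_\Gamma \circ e_\Gamma$ is your $\phi_\Gamma$, its three sub-diagrams implement exactly your two steps (Diagram~(1) is the cap-product/umkehr comparison via $\sigma_\#$, Diagrams~(2) and~(3) are the geodesic-versus-constant homotopy and the $out = \rho_{out}\circ inc_\Gamma$ triangle), and the ``main obstacle'' you flag is resolved there by writing down an explicit chain homotopy $K'$ built from the homotopy $H$ between $\alpha^{in}$ and $inc_\Gamma \circ p \circ \sigma$.
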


\begin{proof}

Recall from Section \ref{STC} the definition of the evaluation map $$ev_{c_\Gamma, \sigma}: \Delta^n \to M^{2\chi}$$ for $\sigma: \Delta^n \to LM^k$. 
Recall from \cite{CG} the definition of the evaluation map $e_\Gamma: LM^k \to M^{|v(\Gamma)|}$.
Here 
$$V(\Gamma)=\{v^1_1, \dots, v^1_{n_1}, v^2_1,  \dots, v^2_{n_2}, \dots, v^k_1, \dots, v^k_{n_k}\}$$ is the set of vertices of $\Gamma$, and the evaluation map is given by
$$e_\Gamma(\gamma_1, \gamma_2, \dots, \gamma_k) = (\gamma_1(v^1_1), \dots, \gamma_1(v^1_{n_1}), \gamma_2(v^2_1),  \dots, \gamma(v^2_{n_2}), \dots, \gamma_k(v^k_1), \dots, \gamma_k(v^k_{n_k})).$$

If $\Gamma$ has chord endpoints coinciding on input circles, then $|v(\Gamma)| < 2\chi$. 
Let $v^i_j$ have multiplicity $m^i_j$. Define the map $$i_\Gamma: M^{|V(\Gamma)|} \to M^{2\chi}$$
 by first repeating the coordinate $\gamma_i(v^i_j)$ a total of $(m^i_j -2)$ times and then permuting the coordinates according to the ordering of the chords of $\Gamma$.

Let $\mathscr{F}(g, n)$ be the space of marked metric fatgraphs with genus $g$ and $n$ boundary components. Let  $$S: SD \to \mathscr{F}(g, k+ \ell)$$ be the map obtained by collapsing each chord to a point and let $V(S(\Gamma))$ be the set of vertices of the fatgraph corresponding to $S(x_\Gamma)$. Let 
$$i'_\Gamma: M^{|V(S(\Gamma))|} \to M^{\chi}$$
be defined by repeating coordinates and permuting so that the lower square in the following diagram commutes.
The upper square in the diagram is Cohen and Godin's pullback square.  The full diagram commutes. We are particularly interested in the triangle relating Cohen and Godin's evaluation map to the evaluation map defined in section \ref{STC} for $\sigma: \Delta^n \to LM^k$ a generator of $C_n(LM^k)$.

\begin{displaymath}
    \xymatrix{Maps(S(\Gamma), M) \ar[rr]^{\rho_{in}} \ar[d]_{e_\Gamma}
    & & LM^k \ar[d]^{e_\Gamma}      \\
               M^{|V(S(\Gamma))|} \ar[rr]^{\Delta_\Gamma} \ar[d]
             &  & M^{|v(\Gamma)|} \ar[d] ^{i_\Gamma}& & \Delta^n \ar[ull]_{\sigma} \ar[dll]^{ev_{c_\Gamma}}\\
M^{\chi} \ar[rr]^{\delta^\chi} 
             &  & M^{2\chi} \\
                 }
\end{displaymath}

In particular, $\rho_{in}, \Delta_\Gamma$ and $\delta^\chi$ are each codimension $\chi d$ embeddings. If $U$ in  $C^{\chi d}(N_\varepsilon, N_{\frac{\varepsilon}{2}})$ represents the Thom class of the multidiagonal $\delta^\chi: M^\chi \to M^{2\chi}$, then $$f_\Gamma^*(U) \in C^{\chi d}(f_\Gamma^{-1}(N_\varepsilon), f_\Gamma^{-1}(N_{\frac{\varepsilon}{2}}))$$ represents the Thom class of $$\rho_{in} :Maps(S(\Gamma), M) \to LM^k.$$ Additionally, 
$$ev_{c_\Gamma}^*(U) = \sigma^*(f_\Gamma^*(U)) \in C^{\chi d}(\tilde{S}_\varepsilon, \tilde{S}_\varepsilon - \tilde{S}_{\frac{\varepsilon}{2}})$$
where $f_\Gamma = i_\Gamma \circ e_\Gamma$.
For the neighborhood $N_\varepsilon$ of the multidiagonal $\delta^\chi(M^\chi) \subset M^{2\chi}$, $f_\Gamma^{-1}(N_\varepsilon)$ is a neighborhood of $\rho_{in}(Maps(S(\Gamma), M)) \subset LM^k$ and $$\tilde{S}_\varepsilon = ev_{c_\Gamma}^{-1}(N_\varepsilon) = \sigma^{-1}(f_\Gamma^{-1}(N_\varepsilon)) \subset \Delta^n.$$

The umkehr map on homology $$(\rho_{in})_!: h_*(LM^k) \to h_{*-\chi d}(Maps(S(\Gamma), M))$$ is defined in \cite{CG} by composing the map induced on homology by a Thom collapse and the Thom isomorphism. Indeed, for $h_* = H_*$, we realize the map $(\rho_{in})_!$ as induced on homology by a specific composition of chain maps. Let $f_\Gamma^{-1}(N_\varepsilon) = F_\varepsilon$ and $f_\Gamma^{-1}(N_\frac{\varepsilon}{2}) = F_\frac{\varepsilon}{2}$ be the tubular neighborhoods of $Im(\rho_{in})$.

Consider the following commutative diagram of spaces.
\begin{displaymath}
    \xymatrix{(LM^k, \emptyset) \ar[rr]^{J} \ar[d]^\tau
    &&  (LM^k, LM^k - F_\frac{\varepsilon}{2}) \ar[d]^q
 \\ 
  (LM^k/(LM^k - F_\frac{\varepsilon}{2}), \emptyset)\ar[rr]^{\hspace{-2cm}J'}
             && (LM^k/(LM^k - F_\frac{\varepsilon}{2}),  (LM^k - F_\frac{\varepsilon}{2})/(LM^k - F_\frac{\varepsilon}{2}))\\
                 }
\end{displaymath}
Here, $J$ and $J'$ are induced by inclusions of the empty set into the appropriate spaces, $\tau$ is the Thom collapse map and $q$ is the quotient map. The maps induced by $J'$ and $q$ on homology are isomorphisms in dimensions $>0$. In particular if $q_\#$ is the induced map on chains, $q_\#$ has a chain homotopy inverse in dimensions $>0$. Call this map $q_\#^{-1}$. Then the following diagram commutes up to chain homotopy.
\begin{displaymath}
    \xymatrix{C_*(LM^k, \emptyset) \ar[rr]^{J_\#} \ar[d]^{\tau_\#}
    &&  C_*(LM^k, LM^k - F_\frac{\varepsilon}{2}) 
 \\ 
  C_*(LM^k/(LM^k - F_\frac{\varepsilon}{2}), \emptyset)\ar[rr]^{\hspace{-2cm}J'_\#}
             && C_*(LM^k/(LM^k - F_\frac{\varepsilon}{2}),  (LM^k - F_\frac{\varepsilon}{2})/(LM^k - F_\frac{\varepsilon}{2})) \ar[u]^{q_\#^{-1}}
             \\
                 }
\end{displaymath}
In particular, the maps induced on homology $J_*$ and $q_*^{-1} \circ J'_* \circ \tau_*$, are equal in dimensions $>0$ and $q_*^{-1}$ and $J'_*$ are isomorphisms in dimensions $>0$. 

The Thom isomorphism is induced by the composition of the following chain maps.
\begin{enumerate}
\item
$$S: C_*(LM^k, LM^k - F_\frac{\varepsilon}{2})  \to C_*(F_\varepsilon, F_\varepsilon - F_\frac{\varepsilon}{2})$$
is given by excising $LM^k - F_\varepsilon$.
\item
$$\cap f^*_\Gamma(U): C_*(F_\varepsilon, F_\varepsilon - F_\frac{\varepsilon}{2}) \to  C_{* - \chi d}(F_\varepsilon)$$
is given by capping with the pulled-back Thom class representative.
\item
$$p_\#: C_{*-\chi d}(F_\varepsilon) \to C_{*-\chi d}(Maps(S(\Gamma), M))$$
is the map induced on by the projection map
 $p: F_\varepsilon \to Maps(S(\Gamma), M)$. Here we are implicitly using the diffeomorphism between $F_\varepsilon$ and the pulled-back normal bundle.
\end{enumerate}
The chain map $p_\# \circ \cap f^*_\Gamma(U) \circ S$ induces the Thom isomorphism on homology and $$p_\# \circ \cap f^*_\Gamma(U) \circ S \circ J_\#$$ induces $$(\rho_{in})_!: H_*(LM^k) \to H_{*-\chi d}(Maps(S(\Gamma), M)).$$
Notice that since $\cap f^*_\Gamma(U)$ has degree $-\chi d<0$, we need only be concerned with dimensions $>0$.

The rest of the proof relies on the homotopy commutativity of a large diagram of chain complexes and chain maps. For simplicity, we indicate only the maps explicitly here. The composition of maps in the top row gives the chain map inducing $(\rho_{in})_!$ as described above. The maps in the bottom row are those in the definition of the string topology construction Section \ref{STC} when $c = \{x_\Gamma\}$ is a 0-cell.
In what follows we prove homotopy commutativity of three sub-diagrams one-by-one. Let $\sigma: \Delta^n \to LM^k$ be a generator of $C_n(LM^k)$.

\begin{displaymath}
    \xymatrix{\bullet\ar[r]^{J_\#} 
    & \bullet \ar[r]^S
    & \bullet \ar[r]^{\cap f_\Gamma^*(U)} 
     & \bullet \ar[rr]^{p_\#}
     && \bullet \ar[drr]^{(\rho_{out})_\#} \ar[dd]_{inc_\#}
 \\ 
     & \hspace{1.2cm}(1) &&&(2)&& (3) & \bullet \\
\bullet \ar[r]^{ j_\#} \ar[uu]^{\sigma_\#}
             &    \bullet   \ar[r]^{s}
              & \bullet \ar[r]^{\cap ev_{c_\Gamma}^*(U)}
                            &\bullet  \ar[rr]^{\alpha^{in}_\#} \ar[uu]^{\sigma_\#}
                            &&\bullet  \ar[urr]_{out_\#}  \\
                 }
\end{displaymath}

\textbf{Diagram (1):}

Diagram (1) is in fact a sequence of three commutative squares. Vertical maps are all induced by $\sigma: \Delta^n \to LM^k$. Indeed, $\sigma$ sends 
\begin{itemize}
\item
$(\Delta^n,  \tilde{S}_{\frac{\varepsilon}{2}}) \to (LM^k, LM^k - F_{\frac{\varepsilon}{2}})$
\item
$(\tilde{S}_\varepsilon, \tilde{S}_{\frac{\varepsilon}{2}}) \to (F_\varepsilon, F_\varepsilon-F_{\frac{\varepsilon}{2}})$
\item
$\tilde{S}_\varepsilon \to F_\varepsilon$
\end{itemize}
By abuse of notation, we call all induced chain maps $\sigma_\#$.
\begin{displaymath}
    \xymatrix{C_*(LM^k) \ar[r]^{\hspace{-1cm}J_\#}
    & C_*(LM^k, LM^k - F_{\frac{\varepsilon}{2}}) \ar[r]^{\; \; S}
     & C_*(F_\varepsilon, F_\varepsilon-F_{\frac{\varepsilon}{2}}) \ar[r]^{\; \; \; \cap f_\Gamma^*(U)} 
     & C_{*- \chi d}(F_\varepsilon)
             \\ 
     &&\\
               C_*(\Delta^n) \ar[r]^{\hspace{-1cm} j_\#} \ar[uu]^{\sigma_\#}
               & C_*(\Delta^n, \Delta^n - \tilde{S}_{\frac{\varepsilon}{2}}) \ar[r]^s \ar[uu]^{\sigma_\#}
             &    C_*(\tilde{S}_\varepsilon, \tilde{S}_{\frac{\varepsilon}{2}})    \ar[r]^{\cap ev_{c_\Gamma}^*(U)} \ar[uu]^{\sigma_\#}
              & C_{*-\chi d}(\tilde{S}_\varepsilon) \ar[uu] \ar[uu]^{\sigma_\#}
              &    \\
                 }
\end{displaymath}

The first two squares of Diagram (1) clearly commute. The third square commutes because $ev_{c_\Gamma}^*(U) = \sigma^*(f_\Gamma^*(U))$.

\textbf{Diagram (2):}

\begin{displaymath}
    \xymatrix{C_{*- \chi d}(F_\varepsilon) \ar[rr]^{p_\#} 
    &&  C_{*- \chi d}(Maps(S(\Gamma), M)) \ar[dd]^{(inc_\Gamma)_\#}
 \\  & \\
C_{*- \chi d}(\tilde{S}_\varepsilon)\ar[uu]^{\sigma_\#}  \ar[rr]^{\alpha^{in}_\#}
             && C_{* - \chi d}(Maps(S_\varepsilon, M))   \\
                 }
\end{displaymath}

Since $c = c_\Gamma = x_\Gamma$ is a 0-cell, $c \times \Delta^n = \{x_\Gamma\} \times \Delta^n \cong \Delta^n$, $\pi: c \times \Delta^n \to c$ is the map $\pi: \{x_\Gamma\} \times \Delta^n \to \{x_\Gamma\}$. If  $\tilde{S}_\varepsilon = \emptyset$ then the operation is identically zero. If $\tilde{S}_\varepsilon \neq \emptyset$ then $S_\varepsilon = \{x_\Gamma\}$ and $\mathcal{M}aps(S_\varepsilon, M) = Maps(\Gamma, M)$ is the usual mapping space. The map 
$$inc_\Gamma: Maps(S(\Gamma), M) \to Maps(\Gamma, M)$$ is then just the inclusion of maps that are constant on chords of $\Gamma$ and $(inc_\Gamma)_\#$ is the induced map on chains.

Diagram (2) is induced by a diagram of spaces and continuous maps. This diagram commutes up to homotopy.
\begin{displaymath}
    \xymatrix{F_\varepsilon \ar[rr]^{\hspace{-1cm}p} 
    &&  Maps(S(\Gamma), M) \ar[dd]^{inc_\Gamma}
 \\  & \\
\tilde{S}_\varepsilon\ar[uu]^{\sigma}  \ar[rr]^{\alpha^{in}}
             && Maps(\Gamma, M) \\
                 }
\end{displaymath}
Let $t \in \tilde{S}_\varepsilon \subset \Delta^n$. Then $\sigma(t) \in f_\Gamma^{-1}(N_\varepsilon) \subset LM^k$ determines a map $ \bigsqcup_k S^1 \to M$ which, by abuse of notation we also call $\sigma(t)$, such that for all chords $e_\Gamma$ of $\Gamma$, $\sigma(t)(\varphi_{e_\Gamma}(0))$ and $\sigma(t)(\varphi_{e_\Gamma}(1))$ lie in some $\varepsilon$ ball in $M$. The map $\alpha^{in}(t)$ maps $\Gamma$ to $M$ by mapping its input circles via $\sigma(t)$ and mapping each chord $e$ to the unique geodesic segment joining $\sigma(t)(\varphi_e(0))$ and $\sigma(t)(\varphi_e(1))$  and lying in the $\varepsilon$ ball as described in Section \ref{STC}.

The projection map $p: f_\Gamma^{-1}(N_\varepsilon) \to Maps(S(\Gamma), M)$ is a deformation retraction. It
takes the map 
$\sigma(t): \bigsqcup_k S^1 \to M$,  such that for all chords ${e_\Gamma}$ of $\Gamma$, $\sigma(t)(\varphi_{e_\Gamma}(0))$ and $\sigma(t)(\varphi_{e_\Gamma}(1))$ lie in some $\varepsilon$ ball, to a map $p(\sigma(t)): \bigsqcup_k S^1 \to M$, such that for all ${e_\Gamma}$, $\sigma(t)(\varphi_{e_\Gamma}(0))=\sigma(t)(\varphi_{e_\Gamma}(1))$. In particular, $\sigma(t)$ and $p(\sigma(t))$ are homotopic.

This homotopy extends to a homotopy $H_t: \Gamma \times I \to M$ between the maps of $\Gamma$ to $M$: $\alpha^{in}(t)$ and  $inc_\Gamma( p ( \sigma(t)))$. Then $$H: \tilde{S}_\varepsilon \times I \to Maps(\Gamma, M), \;(t, s) \mapsto H_t(s)$$ is a homotopy between $\alpha^{in}$ and $inc_\Gamma \circ p \circ \sigma$. This shows that Diagram (2) commutes up to the chain homotopy induced by $H$. By abuse of notation, denote the chain homotopy by $H$ as well, so
$$\partial H + H \partial = (inc_\Gamma)_\# \circ p_\# \circ \sigma_\#.$$

\textbf{Diagram (3)}

\begin{displaymath}
    \xymatrix{      C_{* - \chi d}(Maps(S(\Gamma), M)) \ar[drr]^{(\rho_{out})_\#} \ar[dd]_{(inc_\Gamma)_\#}
 \\ 
     &&C_{*-\chi d}(LM^\ell))\\
                                         C_{*- \chi d}(Maps(\Gamma, M))  \ar[urr]_{out_\#}  \\
                 }
\end{displaymath}

Diagram (3) is induced by a strictly commutative diagram of spaces and continuous maps:

\begin{displaymath}
    \xymatrix{      Maps(S(\Gamma), M) \ar[drr]^{\rho_{out}} \ar[dd]_{inc_\Gamma}
 \\ 
     &&(LM^\ell)\\
                                         Maps(\Gamma, M)  \ar[urr]_{out}  \\
                 }
\end{displaymath}

Let $$\tilde{\mu_\Gamma} = (\rho_{out})_\# \circ p_\# \circ \cap f^*_\Gamma(U) \circ S \circ J_\#$$
and $$\lambda_{\Gamma, \sigma} = out_\# \circ \alpha^{in}_\# \circ \cap ev_{c_\Gamma}^*(U) \circ s \circ j_\#.$$

The (homotopy) commutativity of Diagrams  (1), (2), and (3) tells us that
$$K: C_*(\Delta^n) \to C_{* - \chi d + 1}(LM^\ell), \; \;K = out_\# \circ H \circ \cap ev_{c_\Gamma}^*(U) \circ s \circ j_\#$$ 
satisfies $$\partial K - (-1)^{- \chi d + 1} K \partial = \tilde{\mu_\Gamma} \circ \sigma_\# - \lambda_{\Gamma, \sigma}.$$
Thus, $K$ is a chain homotopy between
$$\tilde{\mu_\Gamma} \circ \sigma_\# \textrm{ 
and }\lambda_{\Gamma, \sigma} .$$

Recall that 
$$\lambda_\Gamma : C_*(LM^k) \to C_{*- \chi d}(LM^\ell), \; \sum_i a_i \sigma_i \mapsto \sum_i a_i \lambda_{\Gamma, \sigma_i}(\mu_{\Delta^n})$$
where $\mu_{\Delta^n}$ is the fundamental chain of $\Delta^n$.

We use $K$ to build a  chain homotopy 
$$K':C_*(LM^k) \to C_{* - \chi d + 1}(LM^\ell)$$ 
between $\tilde{\mu}_\Gamma $ and $\lambda_\Gamma$. 

For a generator $\sigma: \Delta^n \to LM^k$ of $C_*(LM^k)$ let $K'(\sigma) = (-1)^{-\chi d+1} K(\mu_{\Delta^n})$. Then 
\begin{align*}
K'(\partial \sigma) &= (-1)^{-\chi d+1} K(\partial \mu_{\Delta^n}) \\
&= \partial K\mu_{\Delta^n} - \tilde{\mu}_\Gamma ( \sigma_\#(\mu_{\Delta^n})) + \lambda_{\Gamma, \sigma}(\mu_{\Delta^n})\\
& = (-1)^{-\chi d+1}  \partial K'(\sigma) - \tilde{\mu}_\Gamma(\sigma) + \lambda_\Gamma(\sigma)\\ \\
\textrm{so } K' \partial &= (-1)^{-\chi d+1} \partial K' - \tilde{\mu}_\Gamma + \lambda_\Gamma
\end{align*}

Therefore, $K'$ is a chain homotopy between $\lambda_\Gamma$ and $\tilde{\mu}_\Gamma$ and both maps induce $$\mu_\Gamma: H_*(LM)^{\otimes k} \cong H_*(LM^k) \to H_{* - \chi d}(LM^\ell) \cong H_{* - \chi d}(LM)^{\otimes \ell}.$$
\end{proof}

Recall the definition of the chain-level string bracket $\mathcal{ST}(c_\Gamma, -)$ of Example \ref{loopproduct}.
Since $c_\Gamma$ is a cycle, the chain-level loop product is a chain map and so it induces a product
$$H_*(LM)^{\otimes 2} \to H_{*-d}(LM).$$

\begin{cor}\label{corcor}
The chain-level loop product induces the Chas-Sullivan loop product
$$\bullet: H_i(LM) \otimes H_j(LM) \to H_{i+j-d}(LM).$$ We obtain an isomorphism $$H_*(LM) \to H_*(LM)$$ of commutative algebra structures.
\end{cor}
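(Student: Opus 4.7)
The plan is to reduce the corollary to Theorem~\ref{CG} and to Cohen--Godin's identification of their operation for the ``pair of pants'' diagram. First, I would observe that the string diagram $\Gamma$ described in Example~\ref{loopproduct} lies in $SD$: its chord is a single interval, so the chord subgraph is trivially a disjoint union of trees. Consequently, $\Gamma$ may equally be viewed as a marked metric Sullivan chord diagram of type $(0,2,1)$. Moreover, $c_\Gamma$ is a $0$-cell in the subdivided cell structure, so $dc_\Gamma=0$ and $\lambda_\Gamma = \mathcal{ST}(c_\Gamma,-)$ is a bona fide chain map, not merely a map defined modulo chain homotopy.

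Second, I would apply Theorem~\ref{CG} to the diagram $\Gamma$. This gives that the map induced on homology by $\lambda_\Gamma$ equals Cohen--Godin's operation
\[
\mu_\Gamma: H_i(LM)\otimes H_j(LM) \cong H_*(LM^2) \longrightarrow H_{*-d}(LM^1)\cong H_{i+j-d}(LM).
\]
The remaining task is to identify $\mu_\Gamma$ with the Chas--Sullivan loop product $\bullet$. This is exactly the main example in \cite{CG}: collapsing the chord of $\Gamma$ produces the figure-eight graph $S(\Gamma)$, the map $\rho_{in}$ becomes the inclusion of pairs of loops meeting at a basepoint into $LM\times LM$, and $\rho_{out}$ is concatenation at the shared point. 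The Thom collapse $(\rho_{in})_!$ followed by $(\rho_{out})_*$ is by construction the Chas--Sullivan loop product, so $\mu_\Gamma = \bullet$.

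Finally, the ``isomorphism of commutative algebra structures'' is an assertion that the identity map $H_*(LM)\to H_*(LM)$ intertwines the two products. Given the equality $\mu_\Gamma = \bullet$ established above, this is automatic; the only content is to note that the chain-level loop product, being induced by a single $0$-cycle $c_\Gamma$, descends to an honest associative and (graded-)commutative product on homology, matching Chas--Sullivan's. Graded commutativity on the nose at the chain level is not expected, but on homology it follows from the fact that the two string diagrams obtained by swapping the roles of the two input circles represent the same homology class of $\sdbar(0,2,1)/\!\sim$, or equivalently directly from Cohen--Godin's verification.

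The main subtlety is ensuring that the specific marked metric Sullivan chord diagram constructed in Example~\ref{loopproduct} is precisely the one Cohen--Godin use to realize the loop product, so that citing their identification is legitimate; this reduces to checking that the marked points and the chord attaching data in Example~\ref{loopproduct} collapse to the standard figure-eight with the correct input/output parametrizations, which is a direct combinatorial inspection.
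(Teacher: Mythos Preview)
Your proposal is correct and matches the paper's approach: the corollary is stated without an explicit proof, so it is meant to follow immediately from Theorem~\ref{CG} together with Cohen--Godin's identification of $\mu_\Gamma$ for the pair-of-pants diagram with the Chas--Sullivan product, exactly as you outline. Your remark about the ``main subtlety'' is overly cautious: since $Sull(0,2,1)$ is path connected and $\mu_\Gamma$ depends only on the component, any choice of $\Gamma \in SD(0,2,1)$ yields the same homology operation, so no combinatorial matching is actually needed.
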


\begin{remark}
Together with the BV operator on $H_*(LM)$ induced by the $S^1$ action on $LM$, we recover Chas and Sullivan's original BV algebra structure on $H_*(LM)$.
\end{remark}

The operation $\mu_\Gamma$ of \cite{CG} depends only on $\gkl$. 
The proof of this fact in \cite{CG} uses the fact that $Sull$ is path connected. The construction may be generalized to $$\mu_\alpha: h_*(LM^k)  \to h_{* + |\alpha| - \chi d}(LM^\ell)$$ for any $\alpha \in h_*(Sull)$. Details of a generalization do not appear in \cite{CG} but do in \cite{ChataurBordism}.
The operation $\mu_\Gamma$ is equal to $\mu_1$ for $1$ a generator of $h_0(Sull)$.

The construction of Section \ref{STC} induces operations $$H_*(LM)^{\otimes k} \to H_{* + |\alpha| - \chi d}(LM)^{\otimes \ell}$$ coming from classes $\alpha$ in $H_*(\sdbar)$. We are interested in comparing these operations on homology to those previously defined \cite{CG, ChataurBordism}. We examine the operations coming from $0$-dimensional homology classes in detail. The space $\sdbar$ is disconnected in general; we will define an equivalence relation $\sim$ on $\sdbar$ such that the quotient $\sdbar / \sim$ is connected. This quotient is a compactification of a space homotopy equivalent to Cohen-Godin's space $Sull$.

\begin{definition}
Let $\Gamma$ and $\Gamma'$ be two string diagrams. $\Gamma$ and $\Gamma'$ differ by a slide if they are identical except for the attaching map of one chord: $e$ in $\Gamma$ and $e'$ in $\Gamma'$ are related as follows. Assume in $\Gamma$, $\varphi_e(a) = \varphi_f(b)$ for $a,b \in \{0,1\}$, where the chord $f$ follows (respectively precedes) $e$ in the cyclic order at this vertex. Then in $\Gamma'$, $\varphi_{e'}(a) = \varphi_f(c)$ for $c \in \{0,1\}, c \neq b$ and $e'$ precedes (respectively follows) $f$ in the cyclic order at this vertex.
\end{definition}

 \begin{figure}[h]\label{slideslidea}
\centering
\includegraphics{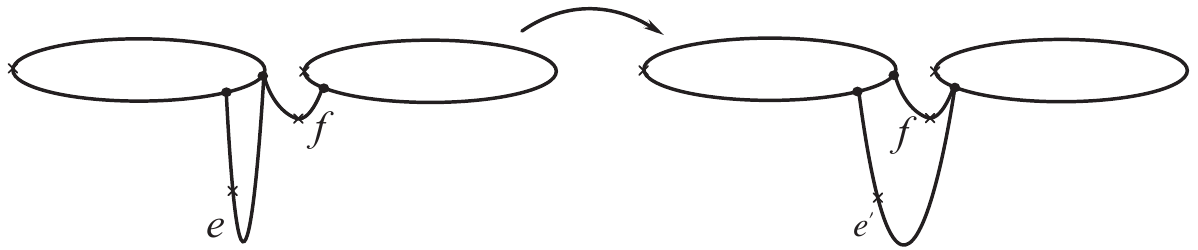}
\caption{Two string diagrams that differ by a slide.}
\end{figure}

\begin{definition}
Let slides generate an equivalence relation $\sim$ on $\sdbar$.
\end{definition}

\begin{remark}
Slide equivalence induces an equivalence relation on the set of cells of $\sdbar$ and hence on the set of generators of the cellular chains $\mathcal{C}_*(\sdbar)$.
\end{remark}

\begin{definition}
Let $\sdbarr$ denote $\sdbar/\sim$ and let $\mathscr{SD}$ denote $SD/\sim$.
\end{definition}

\begin{definition}
If $z$ is a chain in  $\mathcal{C}_*(\sdbar)$, let 
\[\mathcal{ST}(z,- ): C_*(LM^k) \to C_{* + m - \chi d}(LM^\ell)\]
be defined by $\mathcal{ST}(z,-)(\s) = \mathcal{ST}(z,\s)$.
\end{definition}
\begin{remark}
If $z$ is an $m$-cycle in $\mathcal{C}_*(\sdbar)$, then $\mathcal{ST}(z,-): C_*(LM^k) \to C_{* + m - \chi d}(LM^\ell)$ is a degree $m-\chi d$ chain map. 
\end{remark}

We wish to compare $\mathcal{ST}(z,-)$ and $\mathcal{ST}(z',-)$ when $z$ and $z'$ are slide equivalent cycles. Below, we define a map 
$$\Lambda_{z, z'}: C_*(LM^k) \to C_{* +m - \chi d + 1}(LM^\ell)$$ and in Proposition \ref{slideprop} we show it is a chain homotopy between $\mathcal{ST}(z,-)$ and $\mathcal{ST}(z',-)$.

Fix a generator $\sigma: \Delta^n \to LM^k$ of $C_n(LM^k)$ and cells $c$ and $c'$ of $\sdbar$ that differ by a slide. Recall that the construction of $\mathcal{ST}$ uses a map $g_{(c, \sigma)}$, which is a composition of chain maps $C_*(c \times \Delta^n) \to C_{*-\chi d}(LM^k)$. Analogously, the construction of the map $\Lambda_{c,c'}(\sigma)$ below uses a map $L_{c, c', \sigma}$, which is a composition of chain maps $C_*(I) \otimes C_*(c \times \Delta^n)  \to C_{*-\chi d}(LM^k)$. Most of the maps in the composition are completely analogous to those in the definition of $g_{(c,\sigma)}$. The cells $c$ and $c'$ are fixed for this discussion and so we drop them from the notation and let $L_\sigma$ be the following composition of chain maps.

The first map in the composition defining $L_\sigma$ is the Eilenberg-Zilber map $$EZ: C_*(I) \otimes C_*(c \times \Delta^n) \to C_*(I \times c \times \Delta^n).$$

Here we describe $c \times I$ as a space of marked metric fatgraphs such that $c \times \{0\} \sim c$ and $c \times \{1\} \sim c'$. Consider $(x_\Gamma, t) \in {x_\Gamma} \times I$. Assume the $i$-th chord $e_i$ of $\Gamma$ slides over the chord $e$ to produce $\Gamma'$. Assume $\varphi_{e_i}(0) = \varphi_{e}(0)$ in $\Gamma$,  $\varphi_{e_i}(0) = \varphi_{e}(1)$ in $\Gamma'$ and that $\varphi_{e_i}(1)$ in $\Gamma$ and $\Gamma'$ are equal. Under the identification of $e$ with $[0,1]$ we abuse notation and write $\varphi_{e}(0) = 0$ and $\varphi_{e}(0) = 1$.

Let $\Gamma_s$ be the marked metric fatgraph produced when $\varphi_{e_i}(0) = s$. Notice $\Gamma_0 = \Gamma$ and $\Gamma_1 = \Gamma'$. Let $x_{\Gamma_s} = (s, x_\Gamma) \in I \times c$ refer to the graph $\Gamma_s$.

 \begin{figure}[h]\label{slidea}
\centering
\includegraphics{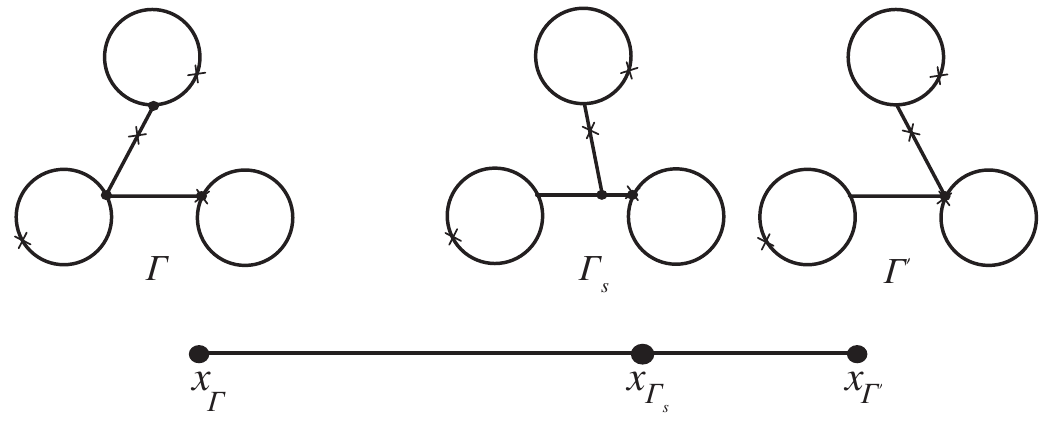}
\caption{$\{x_\Gamma\} \times I$ and the corresponding fatgraphs.}
\end{figure}

To describe the next map in the composition defining $L_\sigma$, we first need to define an evaluation map. This will be done in two steps.

For any $x_{\Gamma_s} \in  I \times c$, the deletion of the $i$-th chord $e_i$ from $\Gamma_s$ yields a string diagram $\Gamma^i$ of type $(g_i, k_i, \ell_i)$. 
Let $x_{\Gamma^i}$ be the corresponding point in the space $\sdbar(g_i, k_i, \ell_i)$ of string diagrams of type $(g_i, k_i, \ell_i)$. Let 
\begin{align*}
F^i: \sdbar &\to \sdbar (g_i, k_i, \ell_i)\\
x_\Gamma &\mapsto x_{\Gamma^i}.
\end{align*}

We define a preliminary evaluation map 
\begin{align*}
\widetilde{ev}^n(\sigma): I \times c \times \Delta^n &\to M^{2\chi - 2}\\
(s, x_\Gamma, t) &\mapsto ev^n(\sigma)(x_{\Gamma^i}, t) 
\end{align*}
where $$ev^n: Maps(\sqcup_k \times \Delta^n, M) \to Maps (\sdbar(g_i, k_i, \ell_i) \times \Delta^n, M^{2\chi -1})$$
is as in Section \ref{STC}.

Let 
$\tilde{T}_\varepsilon = \{(s, x_\Gamma, t) | \widetilde{ev}^n(\sigma)(s, x_\Gamma, t) \in N_\varepsilon\}$, where $N_\varepsilon \subset M^{2\chi - 2}$ is the $\varepsilon$ neighborhood of the multidiagonal $\delta^{\chi - 1}(M^{\chi - 1}) \subset M^{2 \chi -2}$. (We will abuse notation and use $N_\varepsilon$ again in a moment for the $\varepsilon$ neighborhood of the multidiagonal \newline $\delta^{\chi - 1}(M^{\chi}) \subset M^{2\chi}$.)

Let $\pi^i: I \times c \times \Delta^n \to c$ be the projection map and let $T_\varepsilon = \pi^1(\tilde{T}_\varepsilon)$. 

The map $(\alpha^{in})^i: \tilde{T}_\varepsilon \to Maps(F^i(\pi^i(\tilde{T}_\varepsilon)), M)$ is analogous to the map $\alpha^{in}$ of section \ref{STC}: for each point of $x_{\Gamma_s} \in \tilde{T}_\varepsilon$ we are mapping the string diagram $\Gamma^i$ to $M$. In particular, for $(s, x_\Gamma, t)$, the chord $e$ of $\Gamma^i$ is mapped to a short geodesic segment $\gamma$ in $M$ joining $\sigma(\varphi_e(0))$ and $\sigma(\varphi_e(1))$.

We are now prepared to define the evaluation map
$ev^n_{c, c'}(\sigma): \tilde{T}_\varepsilon \to M^{2 \chi}.$
Let 
$ev^n_{c, c'}(\sigma)(s, x_\Gamma, t)_j$ denote the $j$th coordinate of  $ev^n_{c, c'}(\sigma)(s, x_\Gamma, t)$ and $\widetilde{ev}^n(\sigma)_j$ denote the $j$th coordinate of $\widetilde{ev}^n(\sigma)$.
Then
\begin{displaymath}
  ev^n_{c, c'}(\sigma)(s, x_\Gamma, t)_j =  \left\{
     \begin{array}{lr}
       \widetilde{ev}^n(\sigma)_j & j < 2i - 1\\
       \widetilde{ev}^n(\sigma)_{j-2} & j > 2i\\
      \sigma(\varphi_{e_i}(1))& j =2i\\
       \gamma(s)& j  = 2i-1
     \end{array}
   \right.
\end{displaymath}

Let $$\tilde{U}_\varepsilon = \{(s, x_\Gamma, t) : ev_{c,c'}^n(\sigma)(s, x_\Gamma, t) \in N_\varepsilon \subset M^{2 \chi}\}$$
and $$ev_{c,c'} = ev_{c,c'}^n(\sigma)|_{(\tilde{U}_\varepsilon, \tilde{U}_\varepsilon - \tilde{U}_\frac{\varepsilon}{2})}: (\tilde{U}_\varepsilon, \tilde{U}_\varepsilon - \tilde{U}_\frac{\varepsilon}{2}) \to (N_\varepsilon, N_\varepsilon - N_\frac{\varepsilon}{2}).$$

The next three maps in the composition defining $L_\sigma$ are completely analogous to those defined in section \ref{STC} and we use similar notation.
\begin{enumerate}
\item
$j^I_\#: C_*(I \times c \times \Delta^n) \to C_*(I \times c \times \Delta^n,I \times c \times \Delta^n - \tilde{U}_\frac{\varepsilon}{2} )$ is the quotient map induced by inclusion $(I \times c \times \Delta^n, \emptyset) \to (I \times c \times \Delta^n,I \times c \times \Delta^n - \tilde{U}_\frac{\varepsilon}{2} )$.
\item
$s^I: C_*(I \times c \times \Delta^n,I \times c \times \Delta^n - \tilde{U}_\frac{\varepsilon}{2} ) \to C_*(\tilde{U}_\varepsilon,\tilde{U}_\varepsilon - \tilde{U}_\frac{\varepsilon}{2} )$
is given by excision.
\item
$\cap ev_{c, c'}^*(U): C_*(\tilde{U}_\varepsilon,\tilde{U}_\varepsilon - \tilde{U}_\frac{\varepsilon}{2} ) \to C_{*- \chi d}(\tilde{U}_\varepsilon )$
is the cap product with the pulled-back Thom class representative $U$.
\end{enumerate}

We modify the definition of the map $\alpha^{in}$ only slightly in this setting, again using similar notation.

Let $p: I \times c \times \Delta \to I \times c$ be the projection and let $U_\varepsilon = p(\tilde{U}_\varepsilon))$. 
We define 
$$(\alpha^{in})^I: \tilde{U}_\varepsilon \to Maps(U_\varepsilon, M)$$
$$(s, x_\Gamma, t) \mapsto f_{(s, x_\Gamma, t)}: \Gamma_s \to M.$$
Again, the map $f_{(s, x_\Gamma, t)}$ is given by pasting together maps on input circles and chords.
Input circles of $\Gamma_s$ are mapped via $\sigma: \sqcup_k S^1 \times \Delta^n \to M$:
$$f_{(s, x_\Gamma, t)}|_{\sqcup_k S^1} = \sigma_t.$$
On the $j$-th chord, $j \neq i$, we follow the geodesic $\gamma^j$ joining $\sigma_t(\varphi_{e_j}(0))$ and $\sigma_t(\varphi_{e_j}(1))$.
On the $i$-th chord, we follow the geodesic $\gamma^i$ joining $\sigma_t(\varphi_{e_i}(1))$ and $\gamma(s)$. 

We are ready to define the last two maps in the composition giving $L_\sigma$.
\begin{enumerate}
\item
$(\alpha^{in})^I_\#: C_{*- \chi d}(\tilde{U}_\varepsilon ) \to C_{*- \chi d}(Maps(U_\varepsilon, M ))$ is the map induced on chains by $(\alpha^{in})^I$.
\item
$out^I_\#: C_{*- \chi d}(Maps(U_\varepsilon, M )) \to C_{*- \chi d}(LM^\ell )$ is the map induced by the restriction $out^I: Maps(U_\varepsilon, M ) \to LM^\ell$ to outputs as usual.
\end{enumerate}

\begin{definition}
Let $$L_\sigma = out^I_\# \circ ((\alpha^{in})^I)_\# \circ \cap ev_{c, c'}^*(U) \circ s^I \circ j^I_\# \circ EZ: C_*(I) \otimes C_*(c \times \Delta) \to C_{*- \chi d}(LM^\ell ).$$
\end{definition}

\begin{remark}
$L_\sigma$ is a degree $-\chi d$ chain map, that is, $$\partial \circ L_\sigma = (-1)^{- \chi d}L_\sigma \circ \partial.$$
\end{remark}

\begin{lemma}\label{Lsigma}
Let $\partial_i: \Delta^{n-1} \to \Delta^n$ be the inclusion of the $i$-th face given by omitting the $i$-th vertex and let $\sigma_i$ be the restriction of $\sigma_i = \sigma \circ \partial_i$. Let $\partial_{r,s}: \partial_{rs}c \to c$ correspond to the $s$th face map of the $r$th simplex factor of $c$ as in section \ref{STC}. Then the map $L_{\sigma,c,c'}$ satisfies
\begin{enumerate}
\item $L_{\sigma,c,c'}((\partial_0)_\#(\mu_{\Delta^0}) \otimes \mu_{c \times \Delta^n}) = g_{(c, \sigma)}(\mu_{c \times \Delta^n}) = \mathcal{ST}(c, \sigma)$
\item $L_{\sigma,c,c'}((\partial_1)_\#(\mu_{\Delta^0}) \otimes \mu_{c \times \Delta^n}) = g_{(c', \sigma)}(\mu_{c' \times \Delta^n}) = \mathcal{ST}(c', \sigma)$

\item 
$L_{\sigma, c, c'}(\mu_I \otimes EZ((\partial_{rs})_\#(\mu_{\partial_{rs} c}) \otimes \mu_{\Delta^{n}}))
\\ = L_{\sigma, \partial_{rs}c, \partial_{rs}c'}(\mu_I \otimes EZ(\mu_{\partial_{rs}c} \otimes \mu_{\Delta^{n}}))$

\item $L_{\sigma,c,c'}(\mu_I \otimes EZ(\mu_c \otimes (\partial_i)_\#(\mu_{\Delta^{n-1}}))) = L_{\sigma_i, c,c'}(\mu_I \otimes \mu_{c \times \Delta^{n-1}})$.
\end{enumerate}
\end{lemma}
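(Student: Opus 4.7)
The plan is to handle each part by unpacking the definition of $L_{\sigma, c, c'}$ and comparing with the corresponding composition, either for $g_{(c,\sigma)}$ or $g_{(c',\sigma)}$ (for parts (1) and (2)) or for $L_{\sigma, \partial_{rs}c, \partial_{rs}c'}$ or $L_{\sigma_i, c, c'}$ (for parts (3) and (4)). The arguments parallel, and in some cases directly reduce to, those used in the proofs of Lemmas \ref{nate lem2} and \ref{nate lem1}.

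For parts (1) and (2), I would first observe that under the Eilenberg-Zilber map, the chain $(\partial_j)_\#(\mu_{\Delta^0}) \otimes \mu_{c \times \Delta^n}$ represents the fundamental chain of the face $\{j\} \times c \times \Delta^n$ inside $I \times c \times \Delta^n$. Thus it suffices to show that $L_{\sigma, c, c'}$ restricted to this face agrees with $g_{(c,\sigma)}$ (resp.\ $g_{(c',\sigma)}$). The crux is the identification of evaluation maps at the endpoints. At $s = 0$, the hypothesis $\varphi_{e_i}(0) = \varphi_e(0)$ in $\Gamma$ gives $\gamma(0) = \sigma(\varphi_e(0)) = \sigma(\varphi_{e_i}(0))$, so the definition of $ev^n_{c,c'}(\sigma)$ places $\sigma(\varphi_{e_i}(0))$ in the $(2i-1)$-st coordinate and $\sigma(\varphi_{e_i}(1))$ in the $2i$-th, while the remaining coordinates coincide with those of $ev^n(\sigma)$ applied to $\Gamma^i$ (and hence to $\Gamma$, since the corresponding chord endpoints are unchanged by deleting $e_i$). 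Consequently the Thom class pullback $ev_{c,c'}^*(U)$ restricts to $ev_c^*(U)$, and the preimages $\tilde{U}_\e \cap (\{0\} \times c \times \Delta^n)$ and $\tilde{S}_\e(c,\sigma)$ agree. Excision, quotient, and cap product then agree by naturality. The map $(\alpha^{in})^I$ at $s=0$ assigns to $e_i$ the unique short geodesic from $\sigma(\varphi_{e_i}(0))$ to $\sigma(\varphi_{e_i}(1))$, which matches the geodesic produced by $\alpha^{in}$ for $\Gamma$, and since the output boundary cycles of $\Gamma_0$ and $\Gamma$ coincide, $out^I$ matches $out$. Part (2) is completely analogous, using the identification at $s=1$ in $\Gamma'$.

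Parts (3) and (4) are formal consequences of the naturality machinery developed in the proofs of Lemmas \ref{nate lem2} and \ref{nate lem1}. The face inclusion $\partial_{rs} \times 1_n$ (respectively $1_c \times \partial_i$) extends to an inclusion $1_I \times \partial_{rs} \times 1_n$ (respectively $1_I \times 1_c \times \partial_i$), and the construction of $ev^n_{c,c'}(\sigma)$ is manifestly natural with respect to each of these inclusions: the map $F^i$ depends only on the combinatorial type of the string diagram, and the geodesic $\gamma(s)$ depends only on $s$ and $\sigma$, not on the ambient cell or the simplex $\Delta^n$. Consequently each intermediate chain map in the definition of $L_\sigma$ --- namely $j^I_\#$, $s^I$, $\cap ev_{c,c'}^*(U)$, $(\alpha^{in})^I_\#$, and $out^I_\#$ --- commutes with the induced face inclusions, by diagram chases that mirror (\ref{dia foo}) and (\ref{dia bar}) verbatim. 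Combined with the naturality of $EZ$ used in Lemma \ref{nate lem3}, this yields the required equalities of chains.

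The main obstacle will be the combinatorial bookkeeping involved in verifying the endpoint identification of evaluation maps in parts (1) and (2). One must track how deletion of the chord $e_i$ from $\Gamma_s$ to form $\Gamma^i$ permutes the ordering of the remaining $2\chi - 2$ chord-endpoint coordinates, and then reinsert the pair $(\gamma(s), \sigma(\varphi_{e_i}(1)))$ at positions $2i-1$ and $2i$ so as to recover $ev^n(\sigma)$ for $\Gamma$ at $s=0$ (and for $\Gamma'$ at $s=1$). Particular care is needed in the degenerate cases where $\Gamma$ has multiple chord endpoints coinciding on the same input circle, where the chord ordering conventions from the beginning of Section \ref{sdbarUSD} must be invoked. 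Once this identification is established, the remainder of the proof reduces to naturality arguments already carried out in detail earlier in the paper.
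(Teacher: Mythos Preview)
Your proposal is correct and follows essentially the same approach as the paper: both arguments reduce each of the four statements to verifying that the evaluation map $ev^n_{c,c'}(\sigma)$ restricts on the appropriate face of $I\times c\times\Delta^n$ to the corresponding evaluation map ($ev^n_c$, $ev^n_{c'}$, $ev^n_{\partial_{rs}c,\partial_{rs}c'}$, or $ev^{n-1}_{c,c'}(\sigma_i)$), from which the compatibility of the $\tilde U_\varepsilon$ neighborhoods follows, and then assemble commutative diagrams exactly parallel to those in Lemmas~\ref{nate lem2} and~\ref{nate lem1}. The paper presents this more tersely, simply asserting the four evaluation-map identities and drawing the resulting ladder diagrams, whereas you spell out the endpoint computation $\gamma(0)=\sigma(\varphi_e(0))=\sigma(\varphi_{e_i}(0))$ and flag the chord-ordering bookkeeping as a potential subtlety; the paper does not address that bookkeeping at all, so your account is if anything more careful.
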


\begin{proof}

The proof of the four statements relies on the fact that 
$$\partial(I \times c \times \Delta^n)= (\{1\} \times c \times \Delta^n)\cup (\{0\} \times c \times \Delta^n) \cup (I \times \partial c \times \Delta^n) \cup (I \times c \times \partial \Delta^n).$$
Recall that we have described $I \times c$ as a space of marked metric fatrgraphs such that $\{0\} \times c \sim c$ and $\{1\} \times c \sim c'$. We have the maps
$$ \partial_0 \times id: c' \times \Delta^n \sim \{1\} \times c' \times \Delta^n  \to I \times c \times \Delta^n$$
and 
$$ \partial_1 \times id: c \times \Delta^n \sim \{0\} \times c \times \Delta^n  \to I \times c \times \Delta^n$$

The evaluation map 
$ev^n_{c,c'}(\sigma): \tilde{T}_\varepsilon \to M^{2 \chi}$
satisfies
\begin{enumerate}
\item
$ev^n_{c,c'}(\sigma)\circ (\partial_0 \times id)|_{(\partial_0 \times id)^{-1}(\tilde{T}_\varepsilon)} = ev_{c'}^n(\sigma)|_{(\partial_0 \times id)^{-1}(\tilde{T}_\varepsilon)}$

\item
$ev^n_{c,c'}(\sigma)\circ (\partial_1 \times id)|_{(\partial_1 \times id)^{-1}(\tilde{T}_\varepsilon)} = ev_{c}^n(\sigma)|_{(\partial_1 \times id)^{-1}(\tilde{T}_\varepsilon)}$

\item
$ev^n_{c,c'}(\sigma) \circ (id \times \partial_{rs} \times id)|_{ (id \times \partial_{rs} \times id)^{-1}(\tilde{T}_\varepsilon)} = ev^n_{\partial_{rs}c, \partial_{rs}c'}(\sigma)$

\item
$ev^n_{c,c'}(\sigma) \circ (id \times id \times \partial_i)|_{(id \times id \times \partial_i)^{-1}(\tilde{T}_\varepsilon)} = ev^{n-1}_{c,c'}(\sigma_i)|_{(id \times id \times \partial_i)^{-1}(\tilde{T}_\varepsilon)}$
\end{enumerate}

Therefore,
\begin{enumerate}
\item
$\partial_0 \times id:  c' \times \Delta^n \sim \{1\} \times c \times \Delta^n \to I \times c \times \Delta^n$
satisfies
$$(\tilde{S'}_\varepsilon, \tilde{S'}_\varepsilon - \tilde{S'}_\frac{\varepsilon}{2}) \to  (\tilde{U}_\varepsilon, \tilde{U}_\varepsilon - \tilde{U}_\frac{\varepsilon}{2})$$

\item
$\partial_1 \times id: c \times \Delta^n \sim \{0\} \times c \times \Delta^n \to I \times c \times \Delta^n$
satisfies
$$(\tilde{S}_\varepsilon, \tilde{S}_\varepsilon - \tilde{S}_\frac{\varepsilon}{2}) \to  (\tilde{U}_\varepsilon, \tilde{U}_\varepsilon - \tilde{U}_\frac{\varepsilon}{2})$$

\item
$id \times \partial_{rs} \times id: I \times \partial_{rs}c \times \Delta^n \to I \times c \times \Delta^n$
satisfies
$$(\tilde{U}_\varepsilon(\partial_{rs}c, \partial_{rs}c',\sigma),\tilde{U}_\varepsilon(\partial_{rs}c, \partial_{rs}c',\sigma) - \tilde{U}_\frac{\varepsilon}{2}(\partial_{rs}c,\partial_{rs}c',\sigma))  \to (\tilde{U}_\varepsilon(c,c'\sigma),\tilde{U}_\varepsilon(c,c',\sigma) - \tilde{U}_\frac{\varepsilon}{2}(c,c',\sigma))  $$

\item
$id \times id \times \partial_i: I \times c \times \Delta^{n-1} \to I \times c \times \Delta^n$
satisfies
$$(\tilde{U}_\varepsilon(c,c',\partial_i\sigma),\tilde{U}_\varepsilon(c,c',\partial_i\sigma) - \tilde{U}_\frac{\varepsilon}{2}(c,c',\partial_i\sigma))  \to (\tilde{U}_\varepsilon(c,c'\sigma),\tilde{U}_\varepsilon(c,c',\sigma) - \tilde{U}_\frac{\varepsilon}{2}(c,c',\sigma))  $$
\end{enumerate}

These maps induce the vertical maps in the following commutative diagrams. (Again, we suppress names of chain complexes and chain maps when they are clear.)
\begin{displaymath}
    \xymatrix{
    &\bullet\ar[r]^{j_\#}  \ar[ldd]_{(\partial_\iota)_\#(\mu_{\Delta^0}) \otimes} \ar[dd]^{(\partial_\iota \times id)_\#}
    & \bullet \ar[r]^s \ar[dd]^{(\partial_\iota \times id)_\#}
    & \bullet \ar[rr]^{\cap ev_{c}^*(U)} \ar[dd]^{(\partial_\iota \times id)_\#}
     && \bullet \ar[rr]^{\alpha^{in}_\#} \ar[dd]^{(\partial_\iota \times id)_\#}
     && \bullet \ar[drr]^{(out)_\#} \ar[dd]^{(\partial_\iota \times id)_\#}
 \\ 
     && \hspace{1.2cm} &&&&&& & \bullet \\
     \bullet  \ar[r]^{EZ}
&\bullet \ar[r]^{ j^I_\#}
             &    \bullet   \ar[r]^{s^I}
              & \bullet \ar[rr]^{\cap ev_{c,c'}^*(U)}
                            &&\bullet  \ar[rr]^{(\alpha^{in})^I_\#}                        &&\bullet  \ar[urr]_{out^I_\#}  \\
                 }
\end{displaymath}
for $\iota \in \{0,1\}$.

\begin{displaymath}
    \xymatrix{
    C_*(I) \otimes C_*(\partial_{rs}c \times \Delta^n)\ar[r]^{}  \ar[dd]_{id \otimes (\partial_{rs} \times id)_\#} 
    & \bullet \ar[r] \ar[dd]_{(id \times \partial_{rs} \times id)_\#}
    & \bullet \ar[r] \ar[dd]^{}
    & \bullet \ar[rr]^{} \ar[dd]^{}
     && \bullet \ar[r]^{} \ar[dd]^{}
     & \bullet \ar[drr]^{} \ar[dd]^{}
 \\ 
     && \hspace{1.2cm} &&&&& & \bullet \\
     C_*(I) \otimes C_*(c \times \Delta^n)  \ar[r]^{\hspace{1cm} EZ}
&\bullet \ar[r]^{ j^I_\#}
             &    \bullet   \ar[r]^{s^I}
              & \bullet \ar[rr]^{\cap ev_{c,c'}^*(U)}
                            &&\bullet  \ar[r]^{(\alpha^{in})^I_\#}                        &\bullet  \ar[urr]_{out^I_\#}  \\
                 }
\end{displaymath}

\begin{displaymath}
    \xymatrix{
    C_*(I) \otimes C_*(c \times \Delta^{n-1})\ar[r]^{}  \ar[dd]_{id \otimes (id \times \partial_i)_\#} 
    & \bullet \ar[r] \ar[dd]_{(id \times id \times \partial_i)_\#}
    & \bullet \ar[r] \ar[dd]^{}
    & \bullet \ar[rr]^{} \ar[dd]^{}
     && \bullet \ar[r]^{} \ar[dd]^{}
     & \bullet \ar[drr]^{} \ar[dd]^{}
 \\ 
     && \hspace{1.2cm} &&&&& & \bullet \\
     C_*(I) \otimes C_*(c \times \Delta^n)  \ar[r]^{\hspace{1cm} EZ}
&\bullet \ar[r]^{ j^I_\#}
             &    \bullet   \ar[r]^{s^I}
              & \bullet \ar[rr]^{\cap ev_{c,c'}^*(U)}
                            &&\bullet  \ar[r]^{(\alpha^{in})^I_\#}                        &\bullet  \ar[urr]_{out^I_\#}  \\
                 }
\end{displaymath}

By evaluating each diagram on the appropriate chains, we obtain the four statements of the lemma.

\end{proof}

\begin{definition}
\begin{enumerate}
\item
Fix a generator $\sigma: \Delta^n \to LM^k$ of $C_n(LM^k)$ and $m$-cells $c$ and $c'$ of $\sdbar$ that differ by a slide. Let $$\Lambda_{c,c'}(\sigma) = (-1)^{-\chi d}L_\sigma(\mu_I \otimes \mu_{c \times \Delta^n}).$$ Extend $\Lambda_{c,c'}$ to $C_*(LM^k)$ linearly.
\item
Assume $z$ and $z'$ are chains in $C_*(LM^k)$ such that $$z = \sum_j a_j c_j, z' = \sum_j a_j c_j'$$ where $c_j$ and $c_j'$ differ by a slide for all $j$. Define $$\Lambda_{z,z'}: C_*(LM^k) \to C_{* - \chi d +1}(LM^\ell)$$ by
$\Lambda_{z,z'}(\alpha) = \sum_j a_j \Lambda_{c_j, c_j'}(\alpha)$ for $\alpha \in C_*(LM^k)$.
\end{enumerate}
\end{definition}

\begin{prop}\label{slideprop}
If $z$ and $z'$ are slide-equivalent cycles in $\mathcal{C}_*(\sdbar)$, then 
$\Lambda_{z, z'}$ is a chain homotopy between $\mathcal{ST}(z,-)$ and $\mathcal{ST}(z',-)$.
\end{prop}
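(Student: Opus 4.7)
The plan is to compute $\partial\Lambda_{c,c'}(\sigma)$ directly from the definition for a single slide pair $(c,c')$ of $m$-cells and a singular simplex $\sigma\colon\Delta^n\to LM^k$, and then to assemble these pointwise formulas into the desired chain-homotopy identity by linearity and the cycle condition $dz=0$. Since $L_\sigma$ is a chain map of degree $-\chi d$, applying $\partial$ to $\Lambda_{c,c'}(\sigma)=(-1)^{-\chi d}L_\sigma(\mu_I\otimes\mu_{c\times\Delta^n})$ gives
\begin{equation*}
\partial\Lambda_{c,c'}(\sigma)=L_\sigma\bigl(\partial(\mu_I\otimes\mu_{c\times\Delta^n})\bigr).
\end{equation*}
The Leibniz rule, together with $\partial\mu_I=(\partial_0)_\#\mu_{\Delta^0}-(\partial_1)_\#\mu_{\Delta^0}$ and Lemma~\ref{nate lem3}, decomposes this boundary into three groups of terms, corresponding respectively to $\partial I$, $\partial c$, and $\partial\Delta^n$.

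The second step is to identify each group using Lemma~\ref{Lsigma}. Parts (1) and (2) collapse the $\partial I$ contribution to $\mathcal{ST}(c,\sigma)-\mathcal{ST}(c',\sigma)$. Parts (3) and (4), combined with the tautology $L_\sigma(\mu_I\otimes\mu_{c\times\Delta^n})=(-1)^{\chi d}\Lambda_{c,c'}(\sigma)$, convert the $\partial c$ and $\partial\Delta^n$ contributions into $-(-1)^{\chi d}\sum_{r,s}(-1)^{\epsilon(r,s)}\Lambda_{\partial_{rs}c,\partial_{rs}c'}(\sigma)$ and $-(-1)^{\chi d+m}\Lambda_{c,c'}(\partial\sigma)$ respectively. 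Assembling yields the single-pair identity
\begin{equation*}
\partial\Lambda_{c,c'}(\sigma)=\mathcal{ST}(c,\sigma)-\mathcal{ST}(c',\sigma)-(-1)^{\chi d}\!\sum_{r,s}(-1)^{\epsilon(r,s)}\Lambda_{\partial_{rs}c,\partial_{rs}c'}(\sigma)-(-1)^{\chi d+m}\Lambda_{c,c'}(\partial\sigma).
\end{equation*}

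Finally, write $z=\sum_j a_jc_j$ and $z'=\sum_j a_jc_j'$ with each pair $(c_j,c_j')$ slide-equivalent, sum the identity over $j$, and rearrange. The $\partial c$-terms collect into $\sum_{j,r,s}a_j(-1)^{\epsilon(r,s)}\Lambda_{\partial_{rs}c_j,\partial_{rs}c_j'}(\sigma)$. The key observation is that the slide modifies only the attaching map of one specified chord, while every CW face map $\partial_{rs}$ of $\sdbar$ is a local operation (contracting an input-circle edge, or collapsing a marked point onto a vertex) that does not disturb that attaching map. Hence the assignment $c\mapsto c'$ is natural under $\partial_{rs}$: whenever two boundary cells coincide, $\partial_{rs}c_j=\partial_{r's'}c_{j'}$, their slide-partners also coincide, and the $\Lambda$-terms can be regrouped as a weighted sum over cells appearing in $dz$. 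Since $z$ is a cycle, $dz=0$ kills every such coefficient, and this whole term drops out. What remains is $\partial\Lambda_{z,z'}(\sigma)+(-1)^{m+\chi d}\Lambda_{z,z'}(\partial\sigma)=\mathcal{ST}(z,\sigma)-\mathcal{ST}(z',\sigma)$, which is exactly $d_{\mbox{Hom}}\Lambda_{z,z'}=\mathcal{ST}(z,-)-\mathcal{ST}(z',-)$ given that $\deg\Lambda_{z,z'}=m-\chi d+1$. The hard part will be the naturality step: one must verify carefully that slide partners behave well even under degenerate face operations (for instance ones that collapse the chord over which the slide is performed, rendering the slide trivial), so that the bookkeeping of the $dz=0$ cancellation really is consistent; in each such degenerate case either the family $\Gamma_s$ becomes constant in $s$ and $\Lambda$ vanishes, or the slide pairing on the face cell is again well-defined, so the regrouping goes through.
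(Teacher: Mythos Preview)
Your proposal is correct and follows essentially the same route as the paper's proof: compute $\partial\Lambda_{c,c'}(\sigma)$ via the chain-map property of $L_\sigma$, split the boundary of $\mu_I\otimes\mu_{c\times\Delta^n}$ into its $\partial I$, $\partial c$, and $\partial\Delta^n$ parts, identify each via Lemma~\ref{Lsigma}, and then use $dz=0$ to kill the residual $\partial c$-terms after summing. Your explicit discussion of why slide partners behave naturally under the face maps $\partial_{rs}$ (including the degenerate cases) is slightly more careful than the paper's, which simply asserts that when $\partial_{r_ps_p}c_p=\partial_{r_qs_q}c_q$ for a ``compatible slide'' the corresponding $L$-terms agree; but the content is the same.
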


\begin{proof}

We need to show that $$\partial \Lambda_{z,z'} - (-1)^{- \chi d +m+1} \Lambda_{z,z'} \partial = \mathcal{ST}(z,-) - \mathcal{ST}(z',-).$$

First, fix $m$-cells $c$ and $c'$ of $\sdbar$ that differ by a slide.
\begin{align*}
\Lambda_{c,c'} (\partial \sigma) =& \Lambda_{c,c'}\left(\sum_i (-1)^i \sigma_i\right)\\
=& \sum(-1)^i \Lambda_{c,c'}(\sigma_i)\\
=& (-1)^{\chi d}\sum_i (-1)^i L_{\sigma_i} (\mu_I \otimes \mu_{c \times \Delta^{n-1}})
\end{align*}
\begin{align*}
\partial \Lambda_{c,c'}(\sigma) &= (-1)^{-\chi d} \partial L_\sigma(\mu_I \otimes \mu_{c \times\Delta^n}) \\
 =&  L_\sigma \partial_{\otimes}(\mu_I \otimes \mu_{c \times\Delta^n})\\
=& L_\sigma(\partial \mu_I \otimes \mu_{c \times \Delta^n} - \mu_I \otimes \mu_I \otimes \partial \mu_{c \times \Delta^n})\\
=& L_\sigma( (\partial_0)_\#(\mu_{\Delta^0}) \otimes \mu_{c \times \Delta^n}) - L_\sigma( (\partial_1)_\#(\mu_{\Delta^0}) \otimes \mu_{c \times \Delta^n}) \\
&\; \; - L_\sigma(\mu_I \otimes EZ(\partial(\mu_c) \otimes \mu_{\Delta^n} + (-1)^m\mu_c \otimes \partial(\mu_{\Delta^n})))\\
=&  L_\sigma( (\partial_0)_\#(\mu_{\Delta^0}) \otimes \mu_{c \times \Delta^n}) - L_\sigma( (\partial_1)_\#(\mu_{\Delta^0}) \otimes \mu_{c \times \Delta^n}) \\
&\; \; -L_\sigma(\mu_I \otimes EZ(\partial(\mu_c) \otimes \mu_{\Delta^n})) - (-1)^mL_\sigma(\mu_I \otimes EZ(\mu_c \otimes \sum_i(-1)^i(\partial_i)_\#(\mu_{\Delta^{n-1}})))\\
=&  L_\sigma( (\partial_0)_\#(\mu_{\Delta^0}) \otimes \mu_{c \times \Delta^n}) - L_\sigma( (\partial_1)_\#(\mu_{\Delta^0}) \otimes \mu_{c \times \Delta^n}) \\
&\; \; -L_\sigma(\mu_I \otimes EZ(\partial(\mu_c) \otimes \mu_{\Delta^n})) - (-1)^m
\sum_i(-1)^i
L_\sigma(\mu_I \otimes EZ(\mu_c \otimes(\partial_i)_\#(\mu_{\Delta^{n-1}})))
\end{align*}

By Lemma \ref{Lsigma},
\begin{align*}
\partial \Lambda_{c, c'}(\sigma) 
=& \mathcal{ST}(c',\sigma ) - \mathcal{ST}(c, \sigma) -(-1)^m\sum_i (-1)^i L_{\sigma_i}(\mu_I \otimes \mu_{c \times \mu_{\Delta^{n-1}}}) 
\\
\;&  \hspace{2cm}
-L_\sigma(\mu_I \otimes EZ(\partial(\mu_c) \otimes \mu_{\Delta^n}))\\
=& \mathcal{ST}(c', \sigma) - \mathcal{ST}(c, \sigma ) 
-(-1)^{\chi d +m}\sum_i (-1)^i \Lambda_{c,c'}(\sigma_i)
-L_\sigma(\mu_I \otimes EZ(\partial(\mu_c) \otimes \mu_{\Delta^n}))\\
=& \mathcal{ST}(c', \sigma) - \mathcal{ST}(c, \sigma) 
-(-1)^{\chi d +m} \Lambda_{c,c'}(\partial \sigma)
-L_\sigma(\mu_I \otimes EZ(\partial(\mu_c) \otimes \mu_{\Delta^n}))
\end{align*}

So $\Lambda_{c,c}$ fails to be a chain homotopy between $\mathcal{ST}(c,-)$ and $\mathcal{ST}(c',-)$  exactly by the term 
$$-L_{\sigma,c,c'}(\mu_I \otimes EZ(\partial(\mu_c) \otimes \mu_{\Delta^n})).$$
 
 Recall that
 $$L_{\sigma, c, c'}(\mu_I \otimes EZ((\partial_{rs})_\#(\mu_{\partial_{rs} c}) \otimes \mu_{\Delta^{n}}))
 = L_{\sigma, \partial_{rs}c, \partial_{rs}c'}(\mu_I \otimes EZ(\mu_{\partial_{rs}c} \otimes \mu_{\Delta^{n}}))$$ from Lemma \ref{Lsigma}. 
 Assume $c_p$ and $c_q$ are two $m$-cells of $\sdbar$ such that $$\partial_{r_ps_p}c_p = \partial_{r_q s_q}c_q.$$ Let $c_p'$ and $c_q'$ be two other cells of $\sdbar$ which differ from $c_p$ and $c_q$ by a single compatible slide. Then
 $$L_{\sigma, c_p, c_p'}(\mu_I \otimes EZ((\partial_{r_ps_p})_\#(\mu_{\partial_{r_ps_p}c_p}) \otimes \mu_{\Delta^n})) = L_{\sigma, c_q, c_q'}(\mu_I \otimes EZ((\partial_{r_qs_q})_\#(\mu_{\partial_{r_qs_q}c_q}) \otimes \mu_{\Delta^n})).$$

Let  $z = \sum_j a_j c_j$ and $z' = \sum a_j c'_j$ be $m$-cycles such that $c_j$ and $c_j'$ differ by a single compatible slide for all $j$, then

\begin{align*}
\partial z &= \partial \left(\sum_j a_j c_j \right) = \sum_j a_j \partial(c_j) = 0 = \sum_{j,r,s}(-1)^{\epsilon(r,s)}a_j\partial_{rs}c_j = 0\\
&\implies 
 \sum_{j,r,s}(-1)^{\epsilon(r,s)}a_j L_{\sigma, c_j, c'_j}(\mu_I \otimes EZ((\partial_{rs})_\#(\mu_{\partial_{rs}c_j}) \otimes \mu_{\Delta^n}))
 = 0
\\
&\implies
 \sum_j a_j L_{\sigma, c_j, c'_j}(\mu_I \otimes EZ(\partial(\mu_{c_j}) \otimes \mu_{\Delta^n}))=0
 \end{align*}
 and

\begin{align*}
 \partial \Lambda_{z,z'}(\sigma) =&  \sum_j a_j c_j  \partial \Lambda_{c_j,c_j'}(\sigma) \\
=&\sum_j a_j c_j \Big(\mathcal{ST}(c_j', \sigma) - \mathcal{ST}(c_j, \sigma) 
-(-1)^{\chi d +m} \Lambda_{c_j,c_j'}(\partial \sigma)
\\
&\; \hspace{2cm}
-L_{\sigma, c_j, c'_j}(\mu_I \otimes EZ(\partial(\mu_{c_j}) \otimes \mu_{\Delta^n}))\Big)\\
=& \mathcal{ST}(z', \sigma) - \mathcal{ST}(z, \sigma ) 
-(-1)^{\chi d +m} \Lambda_{z,z'}(\partial \sigma).
 \end{align*}
 
 In particular, 
 $$\partial \Lambda_{z,z'} - (-1)^{- \chi d +m+1} \Lambda_{z,z'} \partial = \mathcal{ST}(z',-) - \mathcal{ST}(z,-)$$
 and $\Lambda_{z,z'}$ is a chain homotopy of  $\mathcal{ST}(z,-)$ and $\mathcal{ST}(z',-)$.
\end{proof}

\begin{cor}
If $z$ and $z'$ are slide-equivalent $m$-cycles in $\mathcal{C}_*(\sdbar)$, then $\mathcal{ST}(z,-)$ and $\mathcal{ST}(z', )$ induce the same map on homology: $$H_*(LM^k) \to H_{* + m - \chi d}(LM^\ell)$$.
\end{cor}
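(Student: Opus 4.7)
The plan is to derive the corollary as an immediate consequence of Proposition \ref{slideprop}. That proposition supplies the key technical input: an explicit chain homotopy $\Lambda_{z,z'}$ satisfying
\[
\partial \Lambda_{z,z'} - (-1)^{-\chi d + m + 1} \Lambda_{z,z'} \partial = \mathcal{ST}(z',-) - \mathcal{ST}(z,-).
\]
Since $z$ and $z'$ are cycles, the maps $\mathcal{ST}(z,-)$ and $\mathcal{ST}(z',-)$ are themselves chain maps of degree $m-\chi d$, and the displayed identity says they differ by a chain homotopy. By the standard fact that chain homotopic chain maps induce equal maps on homology, the two induced maps $H_*(LM^k) \to H_{*+m-\chi d}(LM^\ell)$ will agree.

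The one point I would make explicit is the precise meaning of ``slide-equivalent'' for cycles. Since the equivalence $\sim$ on $\sdbar$ is by definition generated by single slides, two slide-equivalent cycles $z$ and $z'$ may require a whole sequence of elementary slide moves to be related, while Proposition \ref{slideprop} as stated applies to pairs of the form $z = \sum_j a_j c_j$, $z' = \sum_j a_j c_j'$ in which each $c_j$ and $c_j'$ differ by a single compatible slide. I would therefore first choose a finite chain of cycles $z = z_0, z_1, \ldots, z_n = z'$ in which each consecutive pair $(z_i, z_{i+1})$ fits the hypothesis of Proposition \ref{slideprop}, then apply the proposition to each consecutive pair to obtain $[\mathcal{ST}(z_i,-)]_* = [\mathcal{ST}(z_{i+1},-)]_*$, and finally compose these equalities by transitivity to conclude $[\mathcal{ST}(z,-)]_* = [\mathcal{ST}(z',-)]_*$.

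I do not anticipate a real obstacle here: all the substantive work has already been done in Proposition \ref{slideprop}, and the corollary is essentially the observation that passing from cycles to homology classes sends chain-homotopic maps to equal ones. The only bookkeeping concerns assembling the right sequence of elementary slides, which is purely combinatorial and does not interact with the geodesic construction underlying $\mathcal{ST}$.
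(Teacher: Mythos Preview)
Your proposal is correct and matches the paper's approach: the paper states this corollary immediately after Proposition~\ref{slideprop} with no separate proof, treating it as the standard consequence that chain-homotopic chain maps induce equal maps on homology. Your added care about factoring a general slide equivalence into a finite sequence of single slides is a reasonable elaboration the paper leaves implicit.
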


\begin{cor}
Let $R$ be a field and let $\mathcal{ST}_*: H_*(\sdbar) \otimes H_*(LM^k) \to H_{*-\chi d}(LM^\ell)$ be the map induced by $\mathcal{ST}$ on homology. Let $q: \sdbar \to \sdbarr$ be the quotient by slide-equivalence. Then $\mathcal{ST}_*$ factors through $$q_* \otimes id: H_*(\sdbar) \otimes H_*(LM^k) \to H_*(\sdbarr) \otimes H_*(LM^k).$$\end{cor}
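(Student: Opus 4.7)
I will show that if $\alpha \in H_m(\sdbar)$ has $q_*(\alpha) = 0$, then $\mathcal{ST}_*(\alpha \otimes \beta) = 0$ for every $\beta \in H_*(LM^k)$, which is equivalent to the claimed factorization. First I would reduce to a chain-level statement: given any cycle $z_0$ representing $\alpha$, the chain $q_\#(z_0)$ is a boundary $\partial w$ in $\mathcal{C}_*(\sdbarr)$. Lifting $w$ to $\tilde w \in \mathcal{C}_{m+1}(\sdbar)$ (by choosing a cell representative in each slide class appearing in $w$) yields a cycle $z := z_0 - \partial \tilde w$ that still represents $\alpha$ and satisfies $q_\#(z) = 0$. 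Since $\mathcal{ST}$ is a chain map (Theorem~\ref{thm chain}), it suffices to produce a chain null-homotopy for $\mathcal{ST}(z, -)$ whenever $z \in \mathcal{C}_m(\sdbar)$ is a cycle with $q_\#(z) = 0$.

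Next, because $\ker(q_\#)$ is spanned over $R$ by differences $c - c'$ of cells related by a single slide (since slides generate the equivalence relation), I decompose $z = \sum_e b_e (c_e - c_e')$ with each $(c_e, c_e')$ a single-slide pair. Setting $H := \sum_e b_e \Lambda_{c_e, c_e'}$, the linear extension of the chain homotopy constructed in Proposition~\ref{slideprop}, a calculation paralleling the proof of that proposition but dropping the cycle condition on the individual pieces, using the identities of Lemma~\ref{Lsigma} throughout, yields
\[
d_{\mathrm{Hom}} H \;=\; -\mathcal{ST}(z,-) \;-\; (-1)^{\chi d}\, H^{(1)},
\]
where
\[
H^{(1)} \;:=\; \sum_e b_e \sum_{r,s}(-1)^{\epsilon(r,s)}\, \Lambda_{\partial_{rs} c_e,\, \partial_{rs} c_e'}
\]
is an analogous combination of $(m-1)$-dimensional $\Lambda$-maps whose underlying slide-difference chain is $\partial z = 0$. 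Thus showing $H^{(1)}$ is null-homotopic finishes the argument, since any null-homotopy $K$ of $H^{(1)}$ assembles with $H$ into an explicit null-homotopy of $\mathcal{ST}(z,-)$.

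It remains to show that $H^{(1)}$ is null-homotopic, which I would handle by descending induction on dimension. Applying $d_{\mathrm{Hom}}$ to the displayed identity and using $d_{\mathrm{Hom}}^2 = 0$ together with $d_{\mathrm{Hom}}\mathcal{ST}(z, -) = 0$ (which holds because $z$ is a cycle) shows that $H^{(1)}$ is itself a chain map. Iterating the same construction, now for the formal decomposition of $0 = \partial z$ as a slide-difference chain, gives a further identity expressing $H^{(1)}$ as a coboundary plus a residual $H^{(2)}$ of still lower dimension, and so on. The iteration terminates at dimension $0$, where faces are empty and the residual vanishes, so the null-homotopies obtained at each stage telescope into an explicit null-homotopy of $\mathcal{ST}(z, -)$. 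The main obstacle, and where the bulk of the work lies, is the combinatorial verification that the face-matching identities of Lemma~\ref{Lsigma}(3) propagate compatibly (and with correct signs) through the iterated boundary, so that the telescoping closes.
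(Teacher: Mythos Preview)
The paper states this corollary without proof, treating it as an immediate consequence of Proposition~\ref{slideprop} and its first corollary. Your setup is more careful than the paper's, and the reduction you make (lifting a bounding chain through $q_\#$ to replace the class by a cycle $z$ with $q_\#(z)=0$, then decomposing $z$ as a finite sum of single-slide differences) is exactly right. The cell-level identity you record,
\[
d_{\mathrm{Hom}}H \;=\; -\,\mathcal{ST}(z,-)\;-\;(-1)^{\chi d}H^{(1)},
\]
with $H=\sum_e b_e\Lambda_{c_e,c_e'}$ and $H^{(1)}=\sum_e b_e\sum_{r,s}(-1)^{\epsilon(r,s)}\Lambda_{\partial_{rs}c_e,\partial_{rs}c_e'}$, also follows correctly from the computation in the proof of Proposition~\ref{slideprop} together with Lemma~\ref{Lsigma}(3).

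The gap is in the inductive step. ``Iterating the same construction'' on the slide-difference decomposition of $\partial z$ means forming the $\Lambda$-sum attached to that decomposition, which is $H^{(1)}$ itself, and then applying the cell-level identity again. What this yields is
\[
d_{\mathrm{Hom}}H^{(1)} \;=\; -\,\mathcal{ST}(\partial z,-)\;-\;(-1)^{\chi d}H^{(2)} \;=\; -(-1)^{\chi d}H^{(2)},
\]
i.e.\ it expresses $H^{(2)}$ as a coboundary of $H^{(1)}$, not $H^{(1)}$ as a coboundary plus $H^{(2)}$. Continuing, one obtains $d_{\mathrm{Hom}}H^{(k)}=\pm H^{(k+1)}$ for $k\ge 1$; in fact $H^{(2)}=0$ already, since $\partial^2=0$ cell-by-cell and the slide commutes with the face maps, so each codimension-two face of $c_e$ cancels in pairs and the matching pair $(\partial_{r's'}\partial_{rs}c_e,\partial_{r's'}\partial_{rs}c_e')$ cancels with it. But all this shows is that $H^{(1)}$ is a $d_{\mathrm{Hom}}$-\emph{cycle}; nothing in the iteration produces a primitive for $H^{(1)}$, so the telescope never closes and you do not obtain a null-homotopy of $\mathcal{ST}(z,-)$.

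To repair the argument you need either a direct reason why $H^{(1)}=0$ for the particular decomposition at hand (this is what the paper's proof of Proposition~\ref{slideprop} extracts from the hypothesis that $z_1=\sum b_ec_e$ is itself a cycle, a hypothesis you correctly note may fail), or a genuinely new, one-degree-higher homotopy---for instance a version of $L_\sigma$ built on $I^2\times c\times\Delta^n$ relating two different single-slide homotopies across a common face---whose $d_{\mathrm{Hom}}$ hits $H^{(1)}$. The construction in the paper does not supply such a secondary homotopy, so ``iterating the same construction'' cannot by itself close the gap.
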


In particular, we have a well-defined homology-level string topology construction $$H_*(\sdbarr) \otimes H_*(LM^k) \to H_{*-\chi d}(LM^\ell).$$

We now compare the operations $H_*(LM^k) \to H_{* -\chi d}(LM^\ell)$ arising from elements of $H_0(\mathscr{SD})$ to those of \ref{CG}.

Let $\mathscr{F}(g,k,\ell)$ be the space of marked metric fatgraphs of genus $g$ and $k+\ell$ boundary cycles, partitioned into $k$ inputs and $\ell$ outputs. The map $$\pi: Sull \to \mathscr{F}(g,k,\ell)$$ defined in \cite{CG} collapses ghost edges of a Sullivan chord diagram. Let $$Im(\pi) = RSull,$$ the space of reduced Sullivan chord diagrams. Godin shows \cite{GodinThesis} that $\pi$ is a homotopy equivalence. A nice outline of the proof is given in \cite{ChataurBordism}.

For a reduced Sullivan chord diagram with $k$ inputs, its edge-lengths may be scaled so that each input boundary cycle has length $1$ and that this rescaling is a homotopy equivalence. Let $R_1Sull$ be the space of reduced Sullivan chord diagrams whose inputs each have length $1$ and let $r_1: RSull \to R_1Sull$ be the deformation retraction given by rescaling input lengths. If $\Gamma$ is a string diagram such that $x_\Gamma \in SD$, then $\pi(x_\Gamma) \in R_1Sull$.

Denote the chord-contracting map  $\pi|_{SD}: SD \to R_1Sull$ by $\pi_{SD}$. If $\Gamma$ and $\Gamma'$ are slide-equivalent string diagrams, then $\pi_{SD}(x_\Gamma) = \pi_{SD}(x_{\Gamma'})$ so $\pi_{SD}$ factors through the quotient map $SD \to \mathscr{SD}$. Let $\pi_{\mathscr{SD}} \to R_1Sull$ be the induced map. 

\begin{prop}
The map $\pi_{\mathscr{SD}}: \mathscr{SD} \to R_1Sull$ is a homotopy equivalence.
\end{prop}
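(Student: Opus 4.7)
The plan is to show $\pi_{\mathscr{SD}}$ is a continuous bijection and then upgrade this to a homotopy equivalence by comparing cell structures. First, I would verify that slide-equivalent string diagrams have the same image in $R_1Sull$: a slide at a shared endpoint $v$ rearranges which chord is attached to $v$ versus its neighbor $v'$, but does not change which input-circle vertices belong to a common connected tree of chords. Since $\pi_{SD}$ only remembers the partition of chord endpoints into such trees (collapsing each tree to a point), slides preserve $\pi_{SD}$, making $\pi_{\mathscr{SD}}$ well-defined and continuous. Surjectivity follows constructively: given a reduced Sullivan chord diagram $\Gamma_{\mathrm{red}}$, each collapsed vertex of valence $v \ge 2$ can be reopened by selecting any tree of $v-1$ length-one chords with the prescribed endpoints on input circles, producing a preimage in $SD$.

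For injectivity, suppose $\pi_{SD}(x_\Gamma) = \pi_{SD}(x_{\Gamma'})$. Then $\Gamma$ and $\Gamma'$ share the same input circle structure, the same positions of chord endpoints, the same output markings, and the same partition of chord endpoints into connected trees; they can differ only in the internal connectivity of each chord-tree. The key combinatorial lemma to prove is that any two trees on a common fixed set of boundary attachment points (respecting the cyclic order inherited from the input circles and the fatgraph data) are related by a sequence of edge-swaps, and that each such edge-swap is realized by a slide. This is essentially the statement that any two planar trees with the same cyclically ordered frontier are related by rotations; I would prove it by induction on the number of trees on which the two diagrams differ, reducing each disagreement to a sequence of local slide moves.

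To promote the bijection $\pi_{\mathscr{SD}}$ to a homotopy equivalence, I would show it is a cellular homeomorphism. The space $R_1Sull$ carries a natural cell structure whose top cells are parameterized by the positions of identified vertices on input circles together with output markings, totalling $4g-4+2k+3\ell$ parameters in each top cell — exactly matching the dimension count in Proposition~\ref{sdbarCW}. The map $\pi_{SD}$ on the interior of a top cell of $SD$ is the identity on these continuous parameters; the many-to-one behavior of $\pi_{SD}$ arises precisely because distinct combinatorial types of chord-trees over the same vertex set map to the same reduced type. The injectivity argument above shows that this finite-to-one identification of cells is exactly the quotient by slide equivalence. Hence after passing to $\mathscr{SD}$, each cell maps homeomorphically onto its image cell in $R_1Sull$, and compatibility with face identifications yields a cellular bijection that is a homeomorphism, hence a homotopy equivalence.

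The principal obstacle will be the combinatorial lemma in the second paragraph: one must carefully argue that any reshuffling of tree structures on a fixed leaf set can be accomplished by slides \emph{while preserving} the cyclic orderings at vertices and the partition of boundary cycles into inputs and outputs (so that the result remains a string diagram with the correct input/output data). A secondary subtlety is checking that the cell-by-cell homeomorphism extends across faces: as chord endpoints collide or as output marked points reach a vertex, combinatorial types degenerate on both sides of the map, and one must verify that the induced quotient cell structure on $\mathscr{SD}$ genuinely matches the face structure of $R_1Sull$ cell by cell.
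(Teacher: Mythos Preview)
Your approach has a genuine gap: the map $\pi_{\mathscr{SD}}$ is \emph{not} a bijection, so the strategy of upgrading a bijection to a cellular homeomorphism cannot succeed. The failure of injectivity comes from output marked points that lie in the interior of a \emph{chord} edge. If two string diagrams $\Gamma$ and $\Gamma'$ agree everywhere except that an output marked point sits at different positions $p\neq p'$ along the same directed chord edge, then after collapsing all chords both marked points land at the same vertex of the reduced diagram, so $\pi_{SD}(x_\Gamma)=\pi_{SD}(x_{\Gamma'})$. These two string diagrams are not slide equivalent: a slide moves a chord \emph{endpoint} across another chord and does nothing to the continuous position of an output marking along a chord. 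Hence $\pi_{\mathscr{SD}}$ has nontrivial fibers---products of intervals, one interval for each output marked point sitting on a chord edge---and your dimension count is misleading: on cells of $\mathscr{SD}$ where some output marking lies on a chord, $\pi_{\mathscr{SD}}$ strictly drops dimension.

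The paper's argument works precisely because these fibers are contractible. It observes that a cell of $\mathscr{SD}$ has the form $\Delta^{n_1}\times\cdots\times\Delta^{n_k}\times[0,1]^{N_\ell}$, with $N_\ell$ counting output marked points lying on chord edges, and that $\pi_{\mathscr{SD}}$ projects away the $[0,1]^{N_\ell}$ factor. One then covers $R_1Sull$ by contractible open neighborhoods of cells, checks that the preimage of each such neighborhood is again contractible (so the restriction of $\pi_{\mathscr{SD}}$ is a weak equivalence there), and invokes a local-to-global result (May's Corollary~1.4) together with Whitehead's theorem to conclude that $\pi_{\mathscr{SD}}$ is a homotopy equivalence. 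To repair your argument you would need to abandon the bijectivity claim and instead argue along these lines, or show directly that $\pi_{\mathscr{SD}}$ has contractible point-preimages and behaves like a quasifibration.
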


\begin{proof}
Recall that the space $\sdbar$ is a space with a regular cell complex structure and that slide-equivalence is cellular. We have a regular cell complex structure on $\sdbarr$ and $\mathscr{SD}$ is again union of open cells. The space $R_1Sull$ is also a union of open cells: each cell is labeled by a combinatorial type of reduced Sullivan chord diagram and parameters in a cell measure where vertices lie on input boundary cycles relative to the marked point and where marked points lie on output boundary cycles. These are exactly the parameters in a cell of $\mathscr{SD}$.

Recall also that a cell $c$ of $\mathscr{SD}$ is a product
$$\Delta^{n_1} \times \Delta^{n_2} \times \dots \times \Delta^{n_k} \times [0,1]^{N_\ell}$$
where $N_\ell$ is the number of output boundary components where the marked point lay in the interior of a directed chord edge.
Because $\pi_{\mathscr{SD}}$ collapses chords, the image $\pi_{\mathscr{SD}}(c)$  of $c$ is the product
$$\Delta^{n_1} \times \Delta^{n_2} \times \dots \times \Delta^{n_k}.$$

Let $\mathscr{U}$ be a cover of $R_1Sull$ by open sets $U$ such that every $U$ is a contractible neighborhood of an open cell. Then $\pi_{SD}^{-1}(U)$ is an open neighborhood of a cell in $\mathscr{SD}$ and $\pi_{SD}: \pi_{SD}^{-1}(U) \to U$ is a homotopy equivalence, in particular, a weak homotopy equivalence. By Corollary 1.4 of \cite{May}, $\pi_{\mathscr{SD}}: \mathscr{SD} \to R_1Sull$ is a weak equivalence. Whitehead's theorem implies that it is a homotopy equivalence.

\end{proof}

We have proved that if $\pi_\mathscr{SD}': R_1Sull \to  \mathscr{SD}$ is a homotopy inverse for $\pi_\mathscr{SD}$ then
$$\pi_\mathscr{SD}'  \circ r_1 \circ \pi: Sull \to \mathscr{SD}$$
is a homotopy equivalence.

\begin{cor}\label{SDSull}
$Sull$ and $\mathscr{SD}$ are homotopy equivalent.
\end{cor}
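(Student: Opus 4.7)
The plan is to assemble the corollary directly from the three homotopy equivalences established immediately before its statement, so the argument is essentially a zig-zag of known equivalences. First I would recall the three ingredients: (i) the chord-collapsing map $\pi : Sull \to RSull$ is a homotopy equivalence by Godin \cite{GodinThesis} (an outline appears in \cite{ChataurBordism}); (ii) the rescaling map $r_1 : RSull \to R_1Sull$ is a deformation retraction, and therefore a homotopy equivalence; and (iii) by the preceding proposition, the induced map $\pi_{\mathscr{SD}} : \mathscr{SD} \to R_1Sull$ is a homotopy equivalence.

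Next I would pick a homotopy inverse $\pi_{\mathscr{SD}}' : R_1Sull \to \mathscr{SD}$ provided by (iii), and form the composite
\[ \pi_{\mathscr{SD}}' \circ r_1 \circ \pi : Sull \longrightarrow \mathscr{SD}. \]
Since a composition of homotopy equivalences is a homotopy equivalence, this composite is the desired homotopy equivalence $Sull \simeq \mathscr{SD}$. This is exactly the composition singled out in the paragraph immediately preceding the corollary, so no further work is required.

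There is no real obstacle here: the preceding proposition carries all of the new content, and the corollary is simply the observation that combining its conclusion with Godin's theorem and the trivial rescaling retraction yields a chain of homotopy equivalences whose composite relates the two spaces in the statement.
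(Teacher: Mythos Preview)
Your proposal is correct and follows essentially the same approach as the paper: the corollary is obtained by composing Godin's equivalence $\pi$, the rescaling retraction $r_1$, and a homotopy inverse $\pi_{\mathscr{SD}}'$ to the map $\pi_{\mathscr{SD}}$ from the preceding proposition. Indeed, the paper records exactly this composite in the sentence just before the corollary and gives no further argument.
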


\begin{theorem}\label{SullSDcommute}
Let 
\[i: Sull \stackrel{\cong}{\to} \mathscr{SD} \hookrightarrow \sdbarr\]
be the composition of inclusion and homotopy equivalence. The following diagram commutes.
\begin{displaymath}
    \xymatrix{      H_0(Sull) \otimes H_*(LM)^{\otimes k} \ar[drr]^{\mu} \ar[dd]_{i_0 \otimes id}
 \\ 
     &&H_{*-\chi d}(LM)^{\otimes \ell}\\
                                        H_0(\sdbarr) \otimes H_*(LM)^{\otimes k} \ar[urr]_{\widetilde{\mathcal{ST}}_*}  \\
                 }
\end{displaymath}

\end{theorem}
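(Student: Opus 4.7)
The plan is to reduce the commutativity claim to Theorem \ref{CG}, using the connectedness of both $Sull$ and $\sdbarr$ to pin down the map $i_0$ explicitly. Since Cohen-Godin proved that $Sull$ is path-connected, $H_0(Sull) \cong R$, and by the construction of slide equivalence the quotient $\sdbarr$ is likewise connected, so $H_0(\sdbarr) \cong R$. A continuous map between path-connected spaces induces the canonical isomorphism on $H_0$ of coefficients, so $i_0$ is determined entirely by the fact that it sends the class of a point to the class of its image. Consequently it suffices to verify the diagram on a single generator $[\Gamma] \otimes x$.

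First I would fix a convenient representative. Noting that $\sdbar \cap Sull = SD$ and $SD$ is nonempty, choose $\Gamma$ with $x_\Gamma \in SD \subset Sull$. Then $[\Gamma]$ generates $H_0(Sull)$ and $[x_\Gamma]$ generates $H_0(\sdbarr)$. Since both classes are represented by points and $i_0$ preserves path components, $i_0([\Gamma]) = [x_\Gamma]$ under the canonical identifications.

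Next I would compute both composites on $[\Gamma] \otimes x$. Along the top edge, the Cohen-Godin action gives $\mu([\Gamma] \otimes x) = \mu_\Gamma(x)$ by definition. Along the bottom edge, subdivide so that $x_\Gamma$ becomes a $0$-cell $c_\Gamma$ as in Section~\ref{STC}; then by Remark~\ref{rem ez} combined with the definition of $\lambda_\Gamma$, the map $\widetilde{\mathcal{ST}}_*([c_\Gamma] \otimes -)$ is exactly $(\lambda_\Gamma)_*$ conjugated by the Eilenberg-Zilber isomorphism $H_*(LM^k) \cong H_*(LM)^{\otimes k}$. Theorem~\ref{CG} then identifies $(\lambda_\Gamma)_*$ with $\mu_\Gamma$, so $\widetilde{\mathcal{ST}}_*(i_0[\Gamma] \otimes x) = \mu_\Gamma(x) = \mu([\Gamma] \otimes x)$, which is the desired commutativity.

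The only subtle point I would need to verify carefully is that the homotopy equivalence $i$, defined as the composition $\pi_{\mathscr{SD}}' \circ r_1 \circ \pi$ followed by the inclusion into $\sdbarr$, really does send $[\Gamma]$ to $[x_\Gamma]$ at the level of $H_0$. This is forced purely by path-connectedness of the intermediate spaces and the fact that $i$ is continuous, so the main work of the theorem is truly absorbed into Theorem~\ref{CG}; the remaining content of the proof is the bookkeeping needed to verify these identifications.
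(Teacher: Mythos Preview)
Your proposal is correct and follows essentially the same route as the paper: both arguments use connectedness of $Sull$ and $\sdbarr$ to reduce to a single generator, then invoke Theorem~\ref{CG} to identify $\mu_\Gamma$ with $(\lambda_\Gamma)_*$. The only minor difference is that the paper explicitly cites Proposition~\ref{slideprop} alongside Theorem~\ref{CG}; in your write-up that proposition is implicitly absorbed into the well-definedness of $\widetilde{\mathcal{ST}}_*$ on $H_0(\sdbarr)$, which is fine since that corollary has already been established.
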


\begin{proof}
The spaces $\mathscr{SD}$ and $\sdbarr$ and $Sull$ are all connected. Let $c_{[\Gamma]}$ be a 0-cell  in $\mathscr{SD}$ representing a generator of $H_0(Sull)$. Then $i(c_{[\Gamma]})$ represents the generator of $H_0(\sdbarr)$.
Together, Theorem \ref{CG} and Proposition \ref{slideprop} show that $$\mu(c_{[\Gamma]}, ) = \widetilde{\mathcal{ST}}_*(i(c_{[\Gamma]}), ).$$ That is, the diagram commutes when evaluated on $c_{[\Gamma]}$. 
Since all the maps in the diagram are linear, the diagram commutes.

\end{proof}

\section{The TQFT structure on homology}

Recall that in Theorem \ref{SullSDcommute} we saw that we recover string topology operations on $H_*(LM)$ arising from homology classes in $H_0(Sull)$. Gluing of Sullivan chord diagrams is defined in \cite{CG} and used to define a positive boundary topological quantum field theory.
In this section we define gluing of string diagrams and show that induced operations on homology respect this gluing for $H_0(\sdbarr)$.

Gluing of slide-equivalence classes of string diagrams is defined as follows.

Let $x_{\Gamma_1}  \in \sdbar(g_1, k_1, \ell_1)$ and $x_{\Gamma_2} \in \sdbar(g_2, k_2, \ell_2)$. Let $\mathfrak{o}_1= \{ o_1, o_2, \dots, o_{\ell_1}\}$ be the set of outputs of $\Gamma_1$, $\mathfrak{i}_2=\{i_1, i_2, \dots, i_{k_2}\}$ be the set of inputs of $\Gamma_2$ and $\mathfrak{s} \subset \mathfrak{o}_1 \times \mathfrak{i}_2 $  be a subset where any element of $\mathfrak{o}_1$ or $\mathfrak{i}_2$ appears at most once as a coordinate of an ordered pair. For $(o_r, i_s) \in \mathfrak{s}$, we identify output the output $o_r$ with the input $i_s$ according to their parametrizations by $S^1$ for all $(o_r, i_s) \in \mathfrak{s}$. Notice that this will usually involve a rescaling of $i_s$ to have the same length as $o_r$. The result of the identifications need not be a string diagram: chord endpoints of $\Gamma_2$ may be identified with points in the interiors of chords of $\Gamma_1$.  

Rather than gluing string diagrams, we glue slide-equivalence classes instead. 
\begin{definition}
Let $\Gamma_i$ represent the slide-equivalence class $[\Gamma_i]$ (corresponding to $x_{\Gamma_i}  \in \sdbar(g_, k_i, \ell_i)$ and $x_{[\Gamma_i]}  \in \sdbarr(g_i, k_i, \ell_i)$) $i \in \{1,2\}$). Identify $o_r$ of $\Gamma_1$ with $i_s$ of $\Gamma_2$ for all $(o_r, i_s) \in \mathfrak{s}$ as above. If any chord endpoint $v$ of $\Gamma_2$ is identified with a point in interior of a chord $e$ of $\Gamma_1$, then slide $v$ to one endpoint or the other of $e$ so that it coincides with a vertex on an input circle of $\Gamma_1$.  The result is a string diagram $\Gamma_1 \# \Gamma_2$ of type $(g_1+g_2+|\mathfrak{s}|-1, k_1+k_2-|\mathfrak{s}|,\ell_1+\ell_2-|\mathfrak{s}|)$. We order inputs by first listing inputs of $\Gamma_1$ followed by inputs of $\Gamma_2$ that do not appear as a coordinate in $\mathfrak{s}$ and order the outputs  by first listing the outputs of $\Gamma_1$ that do not appear as a coordinate in $\mathfrak{s}$ followed by the outputs of $\Gamma_2$.  The slide-equivalence class $[\Gamma_1\#_\mathfrak{s} \Gamma_2]$ is independent of the representatives $\Gamma_i$. Therefore, the following map is well defined.
\begin{align*}
 \#_\mathfrak{s}:\sdbarr(g_2, k_2, \ell_2) \times \sdbarr(g_1, k_1, \ell_1) &\to \sdbarr(g_1+g_2+|\mathfrak{s}|-1, k_1+k_2-|\mathfrak{s}|,\ell_1+\ell_2-|\mathfrak{s}|)\\
 (x_{[\Gamma_2]}, x_{[\Gamma_1]}) &\mapsto x_{[\Gamma_1\#_\mathfrak{s} \Gamma_2]}.
\end{align*}
\end{definition}

We would like to say that such operations give
$$\bigsqcup_{\gkl} \sdbarr \gkl$$
the structure of a properad \cite{Vallette} but composition of such operations need not be associative. However, for any $\mathfrak{s}$, $\#_\mathfrak{s}$ is a cellular map and composition of induced maps on cellular chains, and hence on homology, is associative.

\begin{definition}
Let $ (\#_\mathfrak{s})_\#$ and $ (\#_\mathfrak{s})_*$ denote
 the maps induced by $\#_\mathfrak{s}$ on cellular chains and cellular homology.
 To be explicit:
\begin{align*}
( \#_\mathfrak{s})_\#:\mathcal{C}_*(\sdbarr(g_2, k_2, \ell_2)) &\otimes \mathcal{C}_*(\sdbarr(g_1, k_1, \ell_1)) \stackrel{EZ}{\to}
\mathcal{C}_*(\sdbarr(g_2, k_2, \ell_2)) \times \sdbarr(g_1, k_1, \ell_1)) \\
&\to
 \mathcal{C}_*(\sdbarr(g_1+g_2+|\mathfrak{s}|-1, k_1+k_2-|\mathfrak{s}|,\ell_1+\ell_2-|\mathfrak{s}|))\\
 ( \#_\mathfrak{s})_*:H_*(\sdbarr(g_2, k_2, \ell_2)) &\otimes H_*(\sdbarr(g_1, k_1, \ell_1)) \stackrel{EZ}{\to}
H_*(\sdbarr(g_2, k_2, \ell_2)) \times \sdbarr(g_1, k_1, \ell_1)) \\
&\to
H_*(\sdbarr(g_1+g_2+|\mathfrak{s}|-1, k_1+k_2-|\mathfrak{s}|,\ell_1+\ell_2-|\mathfrak{s}|))
 \end{align*}
\end{definition}

We might hope that 
the maps $(\#_\mathfrak{s})_\#$ (respectively $(\#_\mathfrak{s})_*$) give
$$\mathcal{C}_*(\sdbarr) = \bigsqcup_{\gkl} \mathcal{C}_*(\sdbarr \gkl) \left( \textrm{ respectively } H_*(\sdbarr)= \bigsqcup_{\gkl} H_*(\sdbarr \gkl) \right)$$
respectively the structure of a properad
and that $\widetilde{\mathcal{ST}}$ would give $C_*(LM)$ the structure of an algebra over the properad $\mathcal{C}_*(\sdbarr)$ (respecively that
$\widetilde{\mathcal{ST}}_*$ would give $H_*(LM)$ the structure of an algebra over the properad $H_*(\sdbarr)$).
This not need be the case. There exist cellular chains of $\sdbar$ and 
whose composition is $0$ in $\mathcal{C}_*(\sdbarr)$ for dimension reasons, but the composition of the corresponding string topology operations is not identically $0$. This situation occurs, for example, when the first chain is a cell of string diagrams that have an output boundary cycle made up only of directed edges corresponding to chords and that output is identified with an input of the second chain in $\#_\mathfrak{s}$.
 However, we do have the following result.

\begin{prop}
Let $k_2 = \ell_1 = |\mathfrak{s}|$,
$\sdbarr_1 = \sdbarr(g_1, k_1, \ell_1)$, 
$\sdbarr_2 = \sdbarr(g_2, k_2, \ell_2)$, and
$\sdbarr_3 = \sdbarr(g_1+g_2, k_1, \ell_2)$.
Then the following diagram commutes.
\begin{displaymath}
    \xymatrix{      H_0(\sdbarr_2) \otimes  H_0(\sdbarr_1)  \otimes H_*(LM)^{\otimes k_1}  
    \ar[rr]^{id \otimes \mathcal{ST}_*}
     \ar[dd]_{(\#_\mathfrak{s})_0 \otimes id} 
     && H_0(\sdbarr_2) \otimes H_{*}(LM)^{\otimes k_2}
     \ar[dd]^{\mathcal{ST}_* \otimes id}
   \\
   & 
   \\
  H_0(\sdbarr_3) \otimes H_*(LM)^{\otimes k_1} 
  \ar[rr]_{\mathcal{ST}_*}
  &&  H_*(LM)^{\otimes \ell_2} \\
                 }
\end{displaymath}
\end{prop}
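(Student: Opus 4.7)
Since every map in the diagram is $R$-linear, it suffices to verify commutativity on a generator $x_{[\Gamma_2]} \otimes x_{[\Gamma_1]} \otimes \alpha$, where each $x_{[\Gamma_i]}$ is a 0-cell representing a generator of the connected space $\sdbarr_i$ and $\alpha \in H_*(LM)^{\otimes k_1}$. First I would apply Theorem~\ref{SullSDcommute} to each of the three instances of $\mathcal{ST}_*$ in the diagram: under the homotopy equivalence $i: Sull \simeq \mathscr{SD} \hookrightarrow \sdbarr$ induced by collapsing chords, we can identify $\widetilde{\mathcal{ST}}_*(x_{[\Gamma_i]}, -)$ with Cohen--Godin's operation $\mu_{\pi(\Gamma_i)}$ for the reduced Sullivan chord diagram $\pi(\Gamma_i)$, and likewise identify $\widetilde{\mathcal{ST}}_*\bigl((\#_\mathfrak{s})_0(x_{[\Gamma_2]}\otimes x_{[\Gamma_1]}),-\bigr)$ with $\mu_{\pi(\Gamma_1 \#_\mathfrak{s} \Gamma_2)}$.

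Under these identifications, the diagram reduces to the identity $\mu_{\pi(\Gamma_1 \#_\mathfrak{s} \Gamma_2)} = \mu_{\pi(\Gamma_2)} \circ \mu_{\pi(\Gamma_1)}$. This is precisely an instance of the gluing/composition formula that Cohen and Godin prove in~\cite{CG} when they show their Sullivan chord diagram operations assemble into a positive boundary TQFT, applied to $\pi(\Gamma_1)$ and $\pi(\Gamma_2)$ glued along the data $\mathfrak{s}$. Hence the only remaining content is to verify that our gluing, after collapsing chords, matches Cohen--Godin gluing.

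The main obstacle will be showing that $\pi(\Gamma_1 \#_\mathfrak{s} \Gamma_2)$ agrees with the Cohen--Godin gluing $\pi(\Gamma_1) \#_\mathfrak{s} \pi(\Gamma_2)$ inside $R_1 Sull$. The possible discrepancy is the sliding step in our definition of $\#_\mathfrak{s}$: a chord endpoint of $\Gamma_2$ that lands in the interior of a chord of $\Gamma_1$ must be slid to an endpoint of that chord before we obtain a valid string diagram. I would handle this by arguing that each such sliding is a slide move as in Section~4, and that $\pi$ sends slide-equivalent string diagrams to the same point of $R_1Sull$ (this is implicit in the construction of the map $\pi_{\mathscr{SD}}$ used in Corollary~\ref{SDSull}). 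Since collapsing the chord subgraph identifies all of these sliding choices with a single vertex of the Sullivan diagram, $\pi(\Gamma_1 \#_\mathfrak{s} \Gamma_2)$ is independent of the slides and equals $\pi(\Gamma_1) \#_\mathfrak{s} \pi(\Gamma_2)$ on the nose. With this combinatorial identification in hand, both compositions in the diagram equal $\mu_{\pi(\Gamma_1) \#_\mathfrak{s} \pi(\Gamma_2)}$ evaluated on $\alpha$, and commutativity follows.
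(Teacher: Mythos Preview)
Your proposal is correct and follows essentially the same route as the paper: reduce to Cohen--Godin's gluing theorem via Theorem~\ref{SullSDcommute}, and check that the two notions of gluing are compatible on $H_0$. The paper organizes this as a large diagram whose outer square is Cohen--Godin's Theorem~6 and whose inner square is the target, with the connecting squares handled by Theorem~\ref{SullSDcommute} and a gluing-compatibility square; your argument unpacks the same logic elementwise.

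One minor remark: you work harder than necessary on the combinatorial verification that $\pi(\Gamma_1 \#_\mathfrak{s} \Gamma_2) = \pi(\Gamma_1) \#_\mathfrak{s} \pi(\Gamma_2)$. The paper sidesteps this entirely by observing that all the spaces $Sull_i$ and $\sdbarr_i$ are connected, so every map on $H_0$ is determined by $1 \otimes 1 \mapsto 1$, and the gluing-compatibility square commutes automatically. Your pointwise argument is not wrong, but the connectedness shortcut is what makes the paper's proof so brief.
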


Implicit in the diagram are the appropriate degree shifts.

\begin{proof}

Gluing of Sullivan chord diagrams is defined slightly differently then gluing of slide-equivalence classes: outputs of $\Gamma_1$ are identified with inputs of $\Gamma_2$ according to their parametrizations. Pointwise, gluing of Sullivan chord diagrams need not be continuous or well-defined, 
but there is a well-defined induced map on 0-dimensional homology:
$$(\#)_0: H_0(Sull_2) \otimes H_0(Sull_1) \to H_0(Sull_2 \times Sull_1) \to H_0(Sull_3)$$
and operations arising from 0-dimensional homology classes are well defined.

Theorem 6 of \cite{CG} implies that the following diagram commutes.

\textbf{Diagram (1):}
\begin{displaymath}
    \xymatrix{      H_0(Sull_2) \otimes  H_0(Sull_1)  \otimes H_*(LM)^{\otimes k_1}  
    \ar[rr]^{id \otimes \mu_1}
     \ar[dd]_{(\#)_0 \otimes id} 
     && H_0(Sull_2) \otimes H_{*}(LM)^{\otimes k_2}
     \ar[dd]^{\mu_2 \otimes id}
   \\
   &
   \\
  H_0(Sull_3) \otimes H_*(LM)^{\otimes k_1} 
  \ar[rr]_{\mu_3}
  &&  H_*(LM)^{\otimes \ell_2} \\
                 }
\end{displaymath}

By Corollary \ref{SDSull}, the horizontal maps in the following diagram are isomorphisms. Each of the vertical maps is given by $1\otimes1 \mapsto 1$.

\textbf{Diagram (2):}
\begin{displaymath}
    \xymatrix{      H_0(Sull_2) \otimes  H_0(Sull_1)
    \ar[rr]^{(i_2)_0 \otimes (i_1)_0}
     \ar[dd]_{(\#)_0} 
     && H_0(\sdbarr_2) \otimes H_0(\sdbarr_1)
          \ar[dd]^{(\#_\mathfrak{s})_0}
   \\
   & 
   \\
  H_0(Sull_3)  
  \ar[rr]_{i_3}
  &&  H_0(\sdbarr_3)\\
                 }
\end{displaymath}

We have a large commutative diagram. The outer square is diagram (1) above. The inner square is the desired commutative diagram. The square on the left commutes by diagram (2) above and the other three squares commute by Theorem \ref{SullSDcommute}. This implies the desired diagram commutes.

\begin{displaymath}
    \xymatrix{      \bullet
    \ar[rrr]^{id \otimes \mu_1}
    \ar[dr]^{i_0 \otimes i_0 \otimes id}
    \ar[ddd]_{\#_0 \otimes id}
    &&&
    \bullet
    \ar[dl]_{i_0 \otimes id}
    \ar[ddd]^{\mu_2 \otimes id}
\\
   & \bullet
    \ar[r]
    ^{id \otimes \widetilde{\mathcal{ST}}_*}
     \ar[d]_{\#_{\mathfrak{s}_0} \otimes id} 
     & \bullet
          \ar[d]^{\mathcal{\widetilde{ST}}_* \otimes id}
     &
   \\
&  \bullet
  \ar[r]_{\widetilde{\mathcal{ST}}_*}
  & \bullet \ar[dr]_{id}
  \\
   \bullet \ar[ur]_{i_0 \otimes id}
   \ar[rrr]_{\mu_3}
   &&&
   \bullet \ar[ul]
                 }
\end{displaymath}
\end{proof}

We have shown that the operations induced by elements of $H_0(\sdbarr)$ on the homology of the loop space agree with those in \cite{CG} and that these operations respect gluing. Thus we
 obtain a Frobenius algebra structure of $H_*(LM; R)$, in the sense of \cite{CG}, when $R$ is a field.
 
\begin{cor}
The isomorphisms $H_0(Sull \gkl) \to H_0(\sdbarr \gkl)$ induced by the inclusions $i: Sull \gkl \to \sdbarr \gkl$ induce an isomorphism
$$H_*(LM) \to H_*(LM)$$ of Frobenius algebras without counit.
\end{cor}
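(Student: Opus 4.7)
The plan is to assemble this corollary from the identifications already established: Theorem \ref{SullSDcommute} matches the two families of operations at the generator level, and the proposition immediately preceding matches them at the compositional level. Both structures live on the same vector space $H_*(LM;R)$, and the Frobenius algebra isomorphism is realized by the identity on $H_*(LM)$; the content is that the two structures agree.

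First I would note that each induced map $i_0 \colon H_0(Sull\gkl) \to H_0(\sdbarr\gkl)$ is an isomorphism. By Corollary \ref{SDSull} the spaces $Sull\gkl$ and $\mathscr{SD}\gkl$ are homotopy equivalent and hence connected, so both $H_0(Sull\gkl)$ and $H_0(\mathscr{SD}\gkl)$ are copies of $R$; and since $\mathscr{SD}\gkl$ is dense in the connected space $\sdbarr\gkl$, the inclusion $\mathscr{SD}\gkl \hookrightarrow \sdbarr\gkl$ induces the isomorphism on $H_0$ used implicitly (and in Diagram (2)) of the preceding proposition.

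Next, Theorem \ref{SullSDcommute} gives, for every generator $[\Gamma]$ of $H_0(Sull\gkl)$, the equality
\[ \mu_{[\Gamma]}(-) \;=\; \widetilde{\mathcal{ST}}_*\bigl(i_0[\Gamma],\,-\bigr). \]
Specializing to the pair of pants in $Sull(0,2,1)$ and the co-pants in $Sull(0,1,2)$ identifies the Cohen--Godin product and coproduct with the operations coming from $H_0(\sdbarr(0,2,1))$ and $H_0(\sdbarr(0,1,2))$ respectively; the other $\mu_{[\Gamma]}$ are similarly matched with their $\widetilde{\mathcal{ST}}_*$-counterparts.

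Finally I would invoke the compatibility of gluing. Cohen--Godin's Frobenius structure is determined by the family $\{\mu_{[\Gamma]}\}$ together with its behavior under the gluing $\#$ of Sullivan chord diagrams; the preceding proposition shows the family $\{\widetilde{\mathcal{ST}}_*(i_0[\Gamma],-)\}$ behaves compatibly under $\#_\mathfrak{s}$, and Diagram (2) of that proof records precisely the identity $(i_3)_0 \circ (\#)_0 = (\#_\mathfrak{s})_0 \circ \bigl((i_2)_0 \otimes (i_1)_0\bigr)$. Combining the generator-level equality with this compatibility of compositions shows that the associativity of the product, coassociativity of the coproduct, and Frobenius compatibility relation hold identically for the two structures on $H_*(LM)$, so the identity map is an isomorphism of Frobenius algebras without counit. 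There is no single hard obstacle: the work is essentially bookkeeping, and the main thing to verify is that the conventions for ordering inputs and outputs in $\#_\mathfrak{s}$ are set up consistently with those of Cohen--Godin's $\#$, which by construction they are.
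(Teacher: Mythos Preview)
Your proposal is correct and matches the paper's approach exactly: the paper does not even write out a separate proof, treating the corollary as immediate from the sentence preceding it (``We have shown that the operations induced by elements of $H_0(\sdbarr)$ on the homology of the loop space agree with those in \cite{CG} and that these operations respect gluing''), and your argument is precisely the unpacking of that sentence via Theorem~\ref{SullSDcommute} and the gluing proposition.
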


\bibliographystyle{amsalpha}
\bibliography{katenatebiblio}
\end{document}